\documentclass{amsart}

\usepackage[top=1in,bottom=1in,left=1.1in,right=1.1in]{geometry}
\usepackage{amsmath,amssymb,amsfonts,amsthm,mathtools}
\usepackage{mathrsfs}
\usepackage{graphicx,adjustbox}
\usepackage{tikz-cd}
\usepackage{enumitem}
\usepackage{xcolor}
\usepackage{hyperref}

\hypersetup{
  colorlinks=true,
  linkcolor=blue,
  citecolor=blue,
  urlcolor=blue
}

\newtheorem{theorem}{Theorem}[section]
\newtheorem{proposition}[theorem]{Proposition}
\newtheorem{lemma}[theorem]{Lemma}
\newtheorem{corollary}[theorem]{Corollary}

\theoremstyle{definition}
\newtheorem{definition}[theorem]{Definition}
\newtheorem{construction}[theorem]{Construction}
\newtheorem{notation}[theorem]{Notation}

\theoremstyle{remark}
\newtheorem{remark}[theorem]{Remark}

\newcommand{\I}{\mathbf{I}}
\newcommand{\Mod}{\mathbf{Mod}}
\newcommand{\LMod}{\mathbf{Lmod}}
\newcommand{\Bimod}{\mathbf{Bimod}}
\newcommand{\Comm}{\mathbf{Comm}}
\newcommand{\Z}{\mathscr{Z}}
\newcommand{\Zl}{\mathrm{Z}^{l}_{1}}
\newcommand{\Zr}{\mathrm{Z}^{r}_{1}}
\newcommand{\boxt}{\,\Box\,}
\newcommand{\boxtsub}[1]{\,\Box_{#1}\,}

\newcommand{\Hom}{\mathbf{Hom}}
\newcommand{\stringdiagramfigurescale}{0.62}
\newcommand{\stringdiagramfigure}[1]{\adjustbox{valign=c}{\includegraphics[scale=\stringdiagramfigurescale]{#1}}}

\title[Centers of Algebras in Monoidal 2-Categories]{Centers of Algebras in Monoidal 2-Categories}
\author{Hao Xu}
\date{\today}

\begin{document}

\begin{abstract}
We introduce the left, right, and full center of an algebra in a monoidal 2-category and prove their Morita invariance. This categorifies Davydov's theory of centers of algebras in monoidal categories, and specializes to give a uniform, structural account of the Drinfeld center of a fusion category, the crossed braided Drinfeld center of a fusion category graded by a finite group, and the center of a central module monoidal category. Along the way, we develop a theory of 2-adjunctions between monoidal 2-categories and a base-change construction for module 2-categories.
\end{abstract}

\maketitle
\tableofcontents

\section{Introduction}

The purpose of this paper is to develop a 2-categorical version of Davydov's theory of centers and full centers of algebras in monoidal categories \cite{Dav10}. In the 1-categorical setting, an algebra in a braided monoidal category has left and right centers, while the full center can be defined for an algebra in any monoidal category as a commutative algebra in the Drinfeld center of the ambient monoidal category. One of the main structural points of the theory is that the full center is Morita invariant \cite{Dav10,DavHMod}.

We study the analogous story for algebras in monoidal 2-categories. If $\mathfrak{B}$ is a braided monoidal 2-category and $B$ is an algebra in $\mathfrak{B}$, we define the left center $\Zl(B)$ and right center $\Zr(B)$ by universal properties modeled on \emph{commuting objects} over $B$ (Definitions \ref{def:commuting_objects} and \ref{def:commuting_objects_morphisms}). The left center is terminal among objects mapping to $B$ that commute with $B$ using the braiding of $\mathfrak{B}$; the right center is obtained by replacing the braiding by the reverse braiding. Meanwhile, the full center of an algebra $A$ in a monoidal 2-category $\mathfrak{C}$ is defined as the terminal \emph{commuting half-braiding} of $A$ (Definitions \ref{def:commuting_half-braidings} and \ref{def:commuting_half-braidings_morphisms}), which turns out to be a braided algebra in the Drinfeld center $\Z_1(\mathfrak{C})$.

\subsection{Main results}

After recalling the necessary background on the coherence of tricategories \cite{GPS,Gur2} and on algebras, modules, and relative tensor products (§\ref{sec:preliminaries}), the paper establishes the following.

\begin{enumerate}[label=(\arabic*)]
    \item In §\ref{sec:monoidal_2-adjunction}, we establish the general theory of 2-adjunctions between monoidal 2-categories. In particular, we show that the right adjoint of an oplax monoidal 2-functor is lax monoidal, and vice versa, the left adjoint of a lax monoidal 2-functor is oplax monoidal (Construction \ref{cstr:lax_monoidal_structure_on_the_right_adjoint}). This in particular applies to strongly monoidal 2-functors, which characterizes the notion of 2-adjunction internal to the 3-category of monoidal 2-categories, (op)lax monoidal 2-functors, monoidal 2-natural transformations, and monoidal modifications (Corollary \ref{cor:monoidal_2-adjunction}).
    
    \item For an algebra $B$ in a braided monoidal 2-category $\mathfrak{B}$, the left and right centers $\Zl(B)$ and $\Zr(B)$, defined by their universal properties, carry canonical braided algebra structures (Proposition \ref{prop:left_center_braided}).
    
    \item The left and right centers are Morita invariant: if $A$ and $B$ are Morita equivalent algebras in $\mathfrak{B}$, then $\Zl(A)$ and $\Zl(B)$ (resp. $\Zr(A)$ and $\Zr(B)$) are equivalent as braided algebras in $\mathfrak{B}$ (Corollary \ref{cor:left_center_is_Morita_invariant}).
    
    \item If $A$ is an algebra in a braided monoidal 2-category $\mathfrak{B}$, the module 2-category $\Mod_{\mathfrak{B}}(A)$ satisfies a base-change principle: algebras in $\Mod_{\mathfrak{B}}(A)$ are algebras $B$ in $\mathfrak{B}$ equipped with a braided algebra map $A \to \Zl(B)$, and moreover, their module 2-categories are equivalent:
    \[\Mod_{\mathfrak{B}}(B) \simeq \Mod_{\Mod_{\mathfrak{B}}(A)}(B) \]
    (Proposition \ref{prop:algebras-in-modules}, Theorem \ref{thm:base-change-ordinary-modules}).
    
    \item For an algebra $A$ in a monoidal 2-category $\mathfrak{C}$, the full center $\mathbf{Z}_1(A)$ admits two equivalent descriptions: as $\mathbf{V}^R(A)$, the image of $A$ under the right adjoint of the central action 2-functor 
    \[\mathbf{V} \colon \Z_1(\mathfrak{C}) \to \Bimod_\mathfrak{C}(A), \] 
    and, when the counit component is epic, as the left center $\Zl(\mathbf{U}^R(A))$ of the right adjoint of the forgetful 2-functor $\mathbf{U} \colon \Z_1(\mathfrak{C}) \to \mathfrak{C}$ (Theorem \ref{prop:full-center}).
    
    \item The full center is Morita invariant: Morita equivalent algebras have equivalent full centers as braided algebras in $\Z_1(\mathfrak{C})$ (Corollary \ref{cor:full_center_is_morita_invariant}).
\end{enumerate}

The description of the full center through the right adjoint of the forgetful 2-functor is what makes the one-sided centers computable in practice, and it is the mechanism behind the Morita invariance in (4).

\subsection{Examples}

The final section connects the theory back to familiar examples in the field. When $\mathfrak{B} = \mathbf{2Vect}$, algebras are finite semisimple monoidal categories and the left, right, and full centers all coincide with the Drinfeld center \cite{Drinfeld,Majid,JS91}, recovering the classical Morita invariance of the center of a fusion category \cite{ENO05}. When $\mathfrak{C} = \mathbf{2Vect}_G$ (or its $\pi$-twisted variant), the full center of a $G$-graded fusion category is its $G$-crossed braided Drinfeld center \cite{GNN09,TV13}, and its Morita invariance recovers a known result \cite{ENO10,GJS21}; here the grading separates the one-sided centers from the full center, in contrast to the ungraded case. Finally, for $\mathfrak{C} = \mathbf{Mod}(\mathcal{B})$ over a braided fusion category $\mathcal{B}$, algebras are central $\mathcal{B}$-module monoidal categories in the sense of Davydov--Nikshych \cite{DN13,DN21}, whereas the full center recovers the ordinary Drinfeld center of the underlying monoidal category.

\subsection*{Acknowledgements}

The author is supported by Villum Fonden 00060714 “Global Categorical Symmetries and Phases of Quantum Matter”.
\section{Preliminaries}\label{sec:preliminaries}

\subsection{2-categorical foundations}\label{subsec:foundations}

Throughout this section, we take the \emph{semistrict} model of monoidal 2-categories, equivalently, \emph{Gray monoids}. A monoidal 2-category $\mathfrak{C}$ is called semistrict if all unitors are strict, and the associator is strict on the level of objects. The only coherence data surviving in a semistrict monoidal 2-category are the \emph{interchangers} of 1-morphisms.

We use the following form of the Gordon--Power--Street coherence theorem.

\begin{theorem}[Gordon--Power--Street]\label{thm:coherence_of_tricategories}
Every (weak) 3-category is equivalent to a category enriched in the category $\mathbf{Gray}$ of strict 2-categories, strict 2-functors and Gray tensor product. Consequently, after delooping, every monoidal 2-category may be replaced by a semistrict monoidal 2-category, or Gray monoid.
\end{theorem}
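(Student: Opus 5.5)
The plan has two parts: first reduce to the Gordon--Power--Street strictification theorem for tricategories, then obtain the monoidal 2-category statement by delooping. For the first part I would follow the route of \cite{GPS,Gur2}. Given a tricategory $\mathcal{T}$, use the Yoneda embedding to send $\mathcal{T}$ into a $\mathbf{Gray}$-category. The target is built from the hom-bicategories $\mathcal{T}(x,y)$, each replaced by a biequivalent strict 2-category; the classical bicategorical coherence theorem provides these replacements. Composition then becomes a cubical functor $\mathcal{T}(y,z)\times\mathcal{T}(x,y)\to\mathcal{T}(x,z)$. A cubical functor is exactly a strict 2-functor out of the Gray tensor product. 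So the replaced homs assemble into a $\mathbf{Gray}$-enriched category $\mathrm{Gr}(\mathcal{T})$, together with a triequivalence $\mathcal{T}\simeq\mathrm{Gr}(\mathcal{T})$.

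The key steps, in order, are the following.
\begin{enumerate}
\item Strictify each hom-bicategory, using the Yoneda/Mac Lane coherence for bicategories.
\item Transport composition to cubical functors. The weak associativity of the tricategory is absorbed into pseudonatural equivalences, and the pentagonator is absorbed into invertible modifications.
\item Apply a free $\mathbf{Gray}$-category construction to the underlying $\mathbf{Gray}$-enriched graph. Quotient by the coherence relations so that all remaining coherence 3-cells become identities. The preferred method is Gurski's, which proves that the free tricategory on a computad is triequivalent to the free $\mathbf{Gray}$-category on it.
\item Construct a comparison trihomomorphism and check that it is a local biequivalence which is surjective on objects up to equivalence. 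By Gurski's Whitehead-type theorem for tricategories, this yields the triequivalence.
\end{enumerate}

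The main obstacle is step 3: verifying that every diagram of coherence 3-cells in the free tricategory commutes once it is interpreted in the $\mathbf{Gray}$-category. Only the Gray interchangers fail to be strict, and one needs to see that their coherence is forced by the Gray tensor product axioms. I would not redo this argument; I would cite the coherence theorem in \cite{GPS,Gur2} for it.

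For the second part, view a monoidal 2-category $\mathfrak{C}$ as a one-object tricategory $B\mathfrak{C}$. The first part gives a $\mathbf{Gray}$-category $\mathcal{G}$ triequivalent to $B\mathfrak{C}$. Since $B\mathfrak{C}$ has one object, essential surjectivity on objects lets us restrict to the full sub-$\mathbf{Gray}$-category on a single object $*$. The endomorphism $\mathbf{Gray}$-monoid $\mathcal{G}(*,*)$ is then a Gray monoid, that is, a semistrict monoidal 2-category in the sense fixed above: its units are strict, its associativity is strict on objects, and only interchangers remain. Finally, the triequivalence restricts on $\mathcal{G}(*,*)$ to a monoidal biequivalence with $\mathfrak{C}$. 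This is because triequivalences between one-object tricategories whose object map is the identity correspond exactly to monoidal biequivalences.
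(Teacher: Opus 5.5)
Your proposal is correct and follows the paper's approach. The paper gives no argument of its own: it invokes \cite{GPS,Gur2} for the strictification of tricategories and leaves implicit the delooping step you spell out, namely restricting to the one-object full sub-$\mathbf{Gray}$-category on the image of the unique object, whose endo-hom is a Gray monoid monoidally biequivalent to $\mathfrak{C}$. One point in your recap of the cited proof is inaccurate, though it does not affect the argument since you cite that step rather than reprove it: strictifying the hom-bicategories and making composition cubical does not by itself make composition strictly associative, so this does not yet give a $\mathbf{Gray}$-category---strict associativity is exactly what the Yoneda embedding in \cite{GPS}, or alternatively the free $\mathbf{Gray}$-category construction of your step 3, provides.
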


Thus all string diagram arguments below are carried out in the semistrict model. We use the graphical conventions of the author's thesis \cite{Xu-thesis}:
\begin{itemize}[leftmargin=2em]
    \item Regions are labelled by objects, strands by 1-morphisms, nodes by 2-morphisms.
    
    \item Region labels are usually omitted. They are determined by the source and target objects of the neighbouring strands.
    
    \item Strands are drawn horizontally and composed vertically from top to bottom.
    
    \item Horizontal juxtaposition of nodes is from left to right.
    
    \item Identity 1-morphisms are drawn as transparent strands, and identity 2-morphisms as transparent vertices.
\end{itemize}

\begin{notation}\label{not:diagram_calculus_interchanger}
We denote the interchanger as follows. For 1-morphisms $f:x\to y$ and $g:u\to v$, it is an invertible 2-morphism 
$(f\boxt v)\circ(x\boxt g)
    \to
    (y\boxt g)\circ(f\boxt u)$ denoted by the following string diagram:
\[
    \pmb{\phi}_{f,g} \quad := \quad \stringdiagramfigure{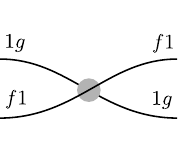} \quad ;
\]
similarly, its inverse is denoted by
\[
    \pmb{\phi}_{f,g}^{-1} \quad := \quad \stringdiagramfigure{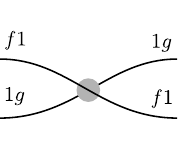} \quad .
\]
Here the label $1$ denotes the identity, indicating the tensor factor that is held fixed. We usually suppress the symbol $\Box$ in the diagram itself. The interchangers record the exchange between left and right tensoring by 1-morphisms. They are natural in $f$ and $g$, preserve identity 1-morphisms, and are compatible with both composition and tensor product of 1-morphisms.
\end{notation}

\begin{notation}\label{not:diagram_calculus_naturality}
We use the following graphical convention for the naturality 2-isomorphism of a 2-natural transformation $\tau \colon F \to G$ between strict 2-categories $\mathfrak{C}$ and $\mathfrak{D}$. For a 1-morphism $f:x\to y$ in $\mathfrak{C}$, the naturality of $\tau$ is witnessed by an invertible 2-morphism $G(f) \circ \tau_x \to \tau_y \circ F(f)$, denoted by
\[\tau_f \quad := \quad \stringdiagramfigure{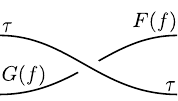} \quad ;\]
its inverse is denoted by
\[\tau_f^{-1} \quad := \quad \stringdiagramfigure{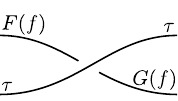} \quad .\]
\end{notation}

We also need to work with braided monoidal 2-categories in this article. We use the following form of coherence theorem from Gurski \cite{Gur2}.

\begin{theorem}[Gurski]\label{thm:gurski_braided_coherence}
Every braided monoidal 2-category is equivalent to a semistrict braided monoidal 2-category. In particular, after passing to a semistrict model, the underlying monoidal 2-category is a Gray monoid and the braiding is encoded by a 2-natural isomorphism $b$ together with two modifications $R$ and $S$, subject to the coherence conditions below.
\end{theorem}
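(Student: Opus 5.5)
The plan is to obtain the statement by \emph{transport of structure}: first strictify the underlying monoidal 2-category using Theorem \ref{thm:coherence_of_tricategories}, then move the braiding data across the resulting monoidal equivalence. This follows the strategy of \cite{Gur2}.

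Let $\mathfrak{B}$ be a braided monoidal 2-category with braiding $(b,R,S)$. View it as a one-object tricategory (delooping) and apply Theorem \ref{thm:coherence_of_tricategories}. This gives a Gray monoid $\mathfrak{B}'$ and a monoidal biequivalence $F\colon \mathfrak{B}'\to \mathfrak{B}$. Choose a weak inverse $G$. Promote the unit and counit to monoidal pseudonatural adjoint equivalences $\eta\colon \mathrm{id}\Rightarrow GF$ and $\epsilon\colon FG\Rightarrow \mathrm{id}$, with invertible monoidal modifications satisfying the triangle identities. This step is formal, because biequivalences in a tricategory can always be improved to adjoint biequivalences.

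Next I define the braiding on $\mathfrak{B}'$. For objects $x,y$ in $\mathfrak{B}'$ set
\[
b'_{x,y}\colon x\boxt y \xrightarrow{\ \eta\ } G F(x\boxt y)\simeq G(Fx\boxt Fy)\xrightarrow{G(b_{Fx,Fy})} G(Fy\boxt Fx)\simeq GF(y\boxt x)\xrightarrow{\ \eta^{-1}\ } y\boxt x .
\]
The pseudonaturality 2-cells of $b'$ are built from those of $b$, $\eta$ and the monoidal structure of $F$. The modifications $R'$ and $S'$ are defined by the same pattern: conjugate $R$ and $S$ by the monoidal constraints of $F$ and $G$, and insert the adjunction modifications wherever $GF$ must be cancelled.

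I then check the axioms of a braided Gray monoid for $(b',R',S')$. The key trick is to apply $F$ and whisker by $\epsilon$. This reduces each axiom for $(b',R',S')$ to the corresponding axiom for $(b,R,S)$, plus identities involving only the coherence data of $F$, $G$, $\eta$, $\epsilon$. The latter hold by coherence for monoidal pseudofunctors and transformations, since any two parallel composites of such constraint cells agree. Finally, $F$ is braided by construction: its braiding-compatibility 2-cell $F(b'_{x,y})\cong b_{Fx,Fy}\circ\chi_{x,y}$ comes from $\epsilon$ and the triangle identities. Hence $F$ is a braided monoidal biequivalence, which gives the first sentence of the theorem.

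The main obstacle is the strictness demanded of $b$ in the semistrict model. In Gurski's definition $b$ is a 2-natural isomorphism in the Gray sense, meaning strictly natural with respect to the Gray tensor product up to the interchangers. The transported $b'$ is a priori only pseudonatural. To fix this, I would first try to use the freedom in choosing $\mathfrak{B}'$. Take $\mathfrak{B}'$ to be the Gray monoid generated freely, in the Gray sense, by the underlying 2-category of $\mathfrak{B}$. On such a free Gray monoid, a pseudonatural family specified on generating objects extends to a Gray-natural one, with the naturality 2-cells absorbed into interchangers. If that is too coarse, the fallback is Gurski's actual route: prove a coherence theorem for the free braided Gray monoid, namely that all diagrams of $R$, $S$, interchangers and the naturality of $b$ commute up to the expected braid-group ambiguity. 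Then construct the semistrict model as a quotient of the free one, so that the braiding is Gray-natural by design.
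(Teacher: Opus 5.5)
The paper does not prove this statement; it quotes it from \cite{Gur2}, so your sketch has to stand on its own. Its first half is a viable route: strictify via Theorem~\ref{thm:coherence_of_tricategories}, transport $(b,R,S)$ along the monoidal biequivalence $F$, and check the axioms after applying the locally faithful $F$. Two of those steps are coherence results you must cite, not formalities: the existence of a monoidal adjoint inverse, and the claim that all leftover composites of constraint cells agree.

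Your last paragraph, however, rests on a misreading. In this paper a 2-natural transformation comes with invertible naturality 2-morphisms $\tau_f$ (Notation~\ref{not:diagram_calculus_naturality}), so it is pseudonatural. A braided Gray monoid likewise only asks for invertible naturality 2-cells of $b$ in each variable separately, compatible with the interchangers $\pmb{\phi}$. Your $b'$ already has this form. Restrict its pseudonaturality to the 1-morphisms $(f,1)$ and $(1,g)$; compatibility with $\pmb{\phi}$ then follows from pseudonaturality at $(f,g)$ together with the cubical constraints of $\Box'$. So there is nothing to strictify.

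The repair you propose would fail anyway. The free Gray monoid on the underlying 2-category of $\mathfrak{B}$ has no 1-morphisms between words of different length, for instance between $(x,y)$ and the one-letter word $(x\Box y)$. Hence its comparison to $\mathfrak{B}$ is not a biequivalence. The fallback via a quotient of the free braided Gray monoid is a pointer, not an argument.

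What the sketch actually misses are the strict unit conditions (e) and (f) of Notation~\ref{not:braiding_calculus}, which are part of the statement. Your $b'_{x,\I}=\eta^{-1}\circ G(\cdots)\circ\eta$ is only isomorphic to $1_x$, so $b'_{-,\I}$ is not the identity transformation. Likewise $R'_{x,\I,z}$ cannot be an identity, because its source $(b'_{x,\I}\Box z)\circ b'_{x,z}$ differs from its target $b'_{x,z}$.

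You therefore need a normalization step:
\begin{enumerate}
\item Produce 2-isomorphisms $\theta_x\colon b'_{x,\I}\Rightarrow 1_x$ and $\theta'_x\colon b'_{\I,x}\Rightarrow 1_x$, for example from $R'_{x,\I,\I}$, $S'_{\I,\I,x}$ and invertibility of $b'$.
\item Redefine these components as identities, and conjugate the naturality 2-cells and $R'$, $S'$ by $\theta,\theta'$.
\item Prove from (a)--(d) that the new modifications are identities whenever an argument is $\I$, and that the two normalizations at $b'_{\I,\I}$ agree.
\end{enumerate}

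This is cleaner if you drop $G$, $\eta$, $\epsilon$ altogether. Since $F$ is locally an equivalence, you may take $b'_{x,y}$ to be any lift of $\chi_{y,x}\circ b_{Fx,Fy}\circ\chi_{x,y}^{-1}$, and in particular choose $b'_{x,\I}=1_x=b'_{\I,x}$. All 2-cells are then defined as unique lifts under $F$. What remains is to choose the comparison 2-cells so that $R'$ and $S'$ become identities at unit arguments.
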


\begin{notation}\label{not:braiding_calculus}
Let $\mathfrak{B}$ be a semistrict braided monoidal 2-category. For objects $x,y,z$ of $\mathfrak{B}$:
\begin{itemize}[leftmargin=2em]
    \item The braiding is a 2-natural isomorphism
    \[
        b_{x,y}:x\boxt y\to y\boxt x.
    \]

    \item The modification $R_{x,y,z}$ compares the braiding with the monoidal product in the second variable:
    \[
        R_{x,y,z}:(b_{x,y}\boxt z)\circ(y\boxt b_{x,z})
        \to b_{x,y\boxt z}.
    \]

    \item The modification $S_{x,y,z}$ compares the braiding with the monoidal product in the first variable:
    \[
        S_{x,y,z}:(x\boxt b_{y,z})\circ(b_{x,z}\boxt y)
        \to b_{x\boxt y,z}.
    \]
\end{itemize}
The modifications $R$ and $S$ satisfy the following coherence equations.
\begin{enumerate}[label=(\alph*),leftmargin=2em]
    \item For all objects $x,y,z,w$ in $\mathfrak{B}$, the equation
    \[
    \begin{array}{@{}c@{\quad}c@{\quad}c@{}}
        \stringdiagramfigure{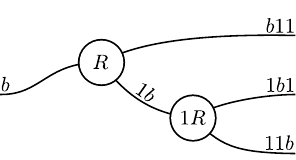}
        &
        =
        &
        \stringdiagramfigure{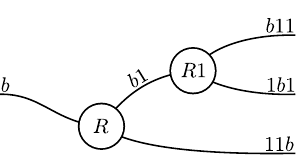}
    \end{array}
    \]
    holds in $\mathbf{Hom}_{\mathfrak{B}}(x\boxt y\boxt z\boxt w,\, y\boxt z\boxt w\boxt x)$;

    \item For all objects $x,y,z,w$ in $\mathfrak{B}$, the equation
    \[
    \begin{array}{@{}c@{\quad}c@{\quad}c@{}}
        \stringdiagramfigure{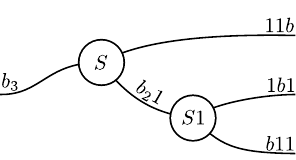}
        &
        =
        &
        \stringdiagramfigure{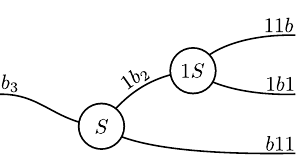}
    \end{array}
    \]
    holds in $\mathbf{Hom}_{\mathfrak{B}}(x\boxt y\boxt z\boxt w,\, w\boxt x\boxt y\boxt z)$;

    \item For all objects $x,y,z,w$ in $\mathfrak{B}$, the equation
    \[
    \begin{array}{@{}c@{\quad}c@{\quad}c@{}}
        \stringdiagramfigure{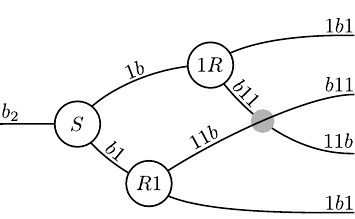}
        &
        =
        &
        \stringdiagramfigure{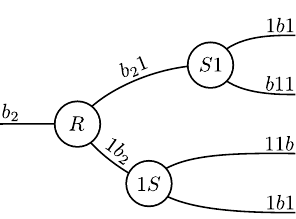}
    \end{array}
    \] 
    holds in $\mathbf{Hom}_{\mathfrak{B}}(x\boxt y\boxt z\boxt w,\, z\boxt w\boxt x\boxt y)$;

    \item For all objects $x,y,z$ in $\mathfrak{B}$, the equation
    \[
    \begin{array}{@{}c@{\quad}c@{\quad}c@{}}
        \stringdiagramfigure{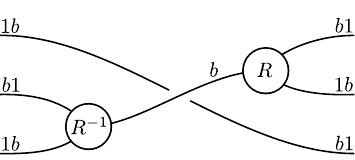}
        &
        =
        &
        \stringdiagramfigure{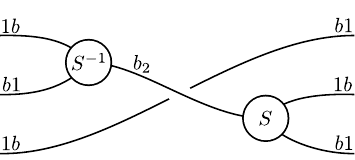}
    \end{array}
    \] 
    holds in $\mathbf{Hom}_{\mathfrak{B}}(x\boxt y\boxt z,\, z\boxt y\boxt x)$;

    \item The following 2-natural isomorphisms are identities: \[b_{-,\I} : \quad x \mapsto b_{x,\I}:x \equiv x \, \Box \,  \I\rightarrow \I \, \Box \, x \equiv x, \] \[ b_{\I,-} : \quad x \mapsto b_{\I,x}: x \equiv \I \, \Box \, x \rightarrow x \, \Box \, \I \equiv x; \]
        
    \item For objects $x,y,z$ in $\mathfrak{B}$, the 2-isomorphisms $R_{x,y,z}$ and $S_{x,y,z}$ are identities whenever one of $x$, $y$, and $z$ is equal to $\I$.
        
\end{enumerate}

More generally in the diagram calculus of braided monoidal 2-categories, for the braiding of $(n+m)$ objects, the notation $b_n$ indicates the braiding of the first $n$ objects over the subsequent $m$ objects. To prevent ambiguity with numerical indices, we write $b$ instead of $b_1$ when $n=1$.
\end{notation}

\begin{notation}\label{not:monoidal_opposite}
    For a monoidal 2-category $\mathfrak{C}$, we denote its \textit{monoidal reversal} by $\mathfrak{C}^{1mp}$, which has the same underlying 2-category but the direction of monoidal product is reversed (along with other coherence data). One can also think of monoidal reversal being the delooping 3-category with the composition of 1-morphisms in the opposite direction, i.e. $\mathrm{B} \mathfrak{C}^{1mp} = (\mathrm{B} \mathfrak{C})^{1op}$. 

    Taking monoidal reversal is functorial, i.e. for monoidal 2-functor $F:\mathfrak{C} \to \mathfrak{D}$, there is a monoidal 2-functor between the monoidal reversals $F^{1mp}:\mathfrak{C}^{1mp} \to \mathfrak{D}^{1mp}$, with the same underlying 2-functor and a canonically induced monoidal 2-functor structure.
\end{notation}

\begin{notation}\label{not:braided_opposite}
    For a braided monoidal 2-category $\mathfrak{B}$, we denote its \textit{braided reversal} by $\mathfrak{B}^{2mp}$, which has the same underlying monoidal 2-category but the direction of the braiding is reversed (along with other coherence data). One can also think of braided reversal being the double delooping 4-category with the composition of 2-morphisms in the opposite direction, i.e. $\mathrm{B}^2 \mathfrak{B}^{2mp} = (\mathrm{B}^2 \mathfrak{B})^{2op}$.

    Taking braided reversal is functorial, i.e. for braided monoidal 2-functor $F:\mathfrak{A} \to \mathfrak{B}$, there is a braided monoidal 2-functor between the braided reversals $F^{2mp}:\mathfrak{A}^{2mp} \to \mathfrak{B}^{2mp}$, with the same underlying monoidal 2-functor and a canonically induced braided monoidal 2-functor structure.
\end{notation}

\subsection{Algebras}\label{subsec:algebras}
Let $(\mathfrak{C},\Box,\I,\pmb{\phi})$ be a semistrict monoidal 2-category.

\begin{definition}
An \emph{algebra} in $\mathfrak{C}$ consists of an object $A$, multiplication and unit 1-morphisms
\begin{equation}
    m \colon A\boxt A\to A,
    \qquad
    i \colon \I\to A,
\end{equation}
associator and unitor 2-isomorphisms
\begin{equation}
    \alpha \colon m \circ (m\boxt 1_A)\to m \circ (1_A\boxt m),
    \qquad
    \lambda \colon m \circ (i\boxt 1_A)\to 1_A,
    \qquad
    \rho \colon m \circ (1_A\boxt i)\to 1_A,
\end{equation}
subject to the pentagon and triangle coherence equations. The pentagon is the following equality in
$\mathbf{Hom}_{\mathfrak{C}}(A\boxt A\boxt A\boxt A,A)$:
\begin{equation}\label{eqn:AlgebraAssociativity}
    \begin{array}{@{}c@{\quad}c@{\quad}c@{}}
        \stringdiagramfigure{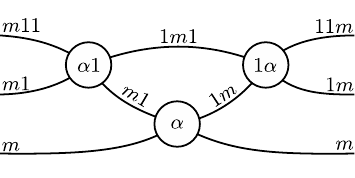}
        &
        =
        &
        \stringdiagramfigure{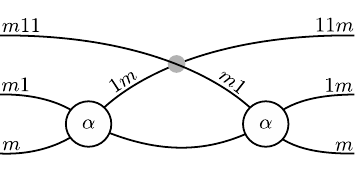} \quad .
    \end{array}
\end{equation}
The triangle is the following equality in
$\mathbf{Hom}_{\mathfrak{C}}(A\boxt A,A)$:
\begin{equation}\label{eqn:AlgebraUnitality}
    \begin{array}{@{}c@{\quad}c@{\quad}c@{}}
        \stringdiagramfigure{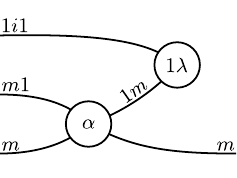}
        &
        =
        &
        \stringdiagramfigure{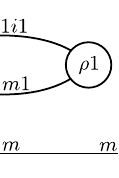} \quad .
    \end{array}
\end{equation}

\end{definition}

\begin{definition}\label{def:algebra_1morphism}
Let $A$ and $B$ be algebras in $\mathfrak{C}$. An \emph{algebra 1-morphism} $f:A\to B$ is a 1-morphism in $\mathfrak{C}$ equipped with invertible 2-morphisms
\begin{equation}
    \psi^f:m^B\circ(f\boxt f)\to f\circ m^A,
    \qquad
    \eta^f:i^B\to f\circ i^A,
\end{equation}
compatible with the associators and unitors of $A$ and $B$. The associativity condition is the following equality in
$\mathbf{Hom}_{\mathfrak{C}}(A\boxt A\boxt A,B)$:
\begin{equation}\label{eqn:Algebra1MorphismCoh1}
    \begin{array}{@{}c@{\quad}c@{\quad}c@{}}
        \stringdiagramfigure{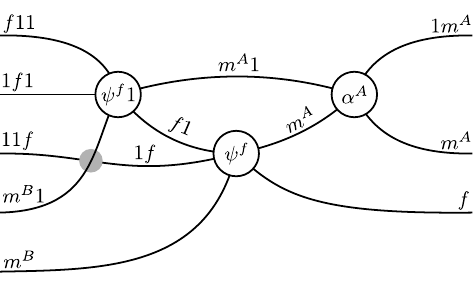}
        &
        =
        &
        \stringdiagramfigure{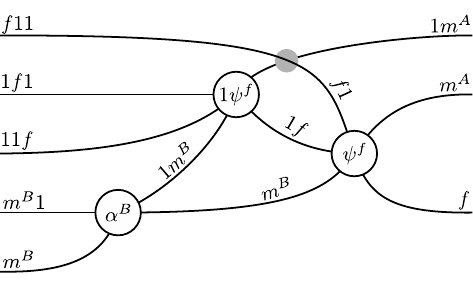} \quad .
    \end{array}
\end{equation}
The left unitality condition is the following equality in
$\mathbf{Hom}_{\mathfrak{C}}(A,B)$:
\begin{equation}\label{eqn:Algebra1MorphismCoh2}
    \begin{array}{@{}c@{\quad}c@{\quad}c@{}}
        \stringdiagramfigure{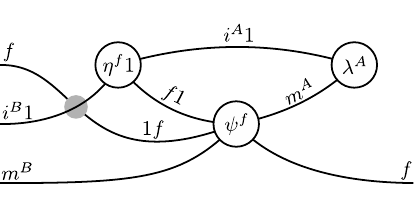}
        &
        =
        &
        \stringdiagramfigure{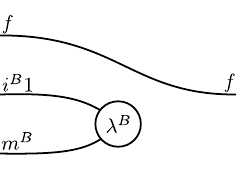} \quad .
    \end{array}
\end{equation}
The right unitality condition is the following equality in
$\mathbf{Hom}_{\mathfrak{C}}(A,B)$:
\begin{equation}\label{eqn:Algebra1MorphismCoh3}
    \begin{array}{@{}c@{\quad}c@{\quad}c@{}}
        \stringdiagramfigure{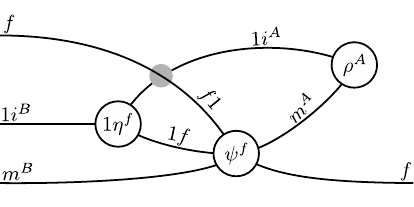}
        &
        =
        &
        \stringdiagramfigure{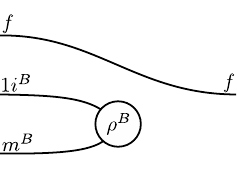} \quad .
    \end{array}
\end{equation}
\end{definition}

\begin{definition}\label{def:algebra_2morphism}
Let $A$ and $B$ be algebras in $\mathfrak{C}$, and let $f,g:A\to B$ be algebra 1-morphisms. An \emph{algebra 2-morphism} $\gamma:f\to g$ is a 2-morphism in $\mathfrak{C}$ compatible with the structure 2-morphisms of $f$ and $g$. The multiplication compatibility is the following equality in
$\mathbf{Hom}_{\mathfrak{C}}(A\boxt A,B)$:
\begin{equation}\label{eqn:Algebra2MorphismCoh1}
    \begin{array}{@{}c@{\quad}c@{\quad}c@{}}
        \stringdiagramfigure{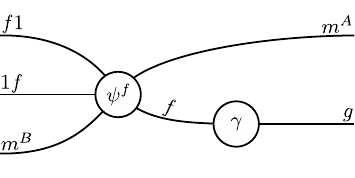}
        &
        =
        &
        \stringdiagramfigure{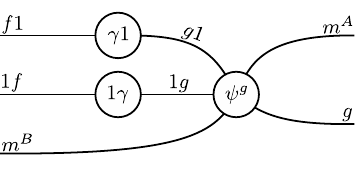} \quad .
    \end{array}
\end{equation}
The unit compatibility is the following equality in
$\mathbf{Hom}_{\mathfrak{C}}(\I,B)$:
\begin{equation}\label{eqn:Algebra2MorphismCoh2}
    \begin{array}{@{}c@{\quad}c@{\quad}c@{}}
        \stringdiagramfigure{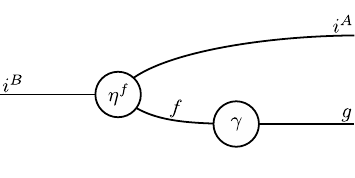}
        &
        =
        &
        \stringdiagramfigure{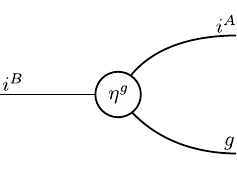} \quad .
    \end{array}
\end{equation}
\end{definition}

\begin{notation}
We write $\mathbf{Alg}(\mathfrak{C})$ for the resulting 2-category of algebras, algebra 1-morphisms, and algebra 2-morphisms.
\end{notation}

\subsection{Braided algebras}\label{subsec:braided_algebras}

Let $(\mathfrak{B},\Box,\I,\pmb{\phi},b,R,S)$ be a braided semistrict monoidal 2-category.

\begin{definition}
A \emph{braided algebra} in $\mathfrak{B}$ is an algebra $B$ together with a 2-isomorphism
\begin{equation}
    \beta:m\circ b_{B,B}\to m
\end{equation}
satisfying the two hexagon-type compatibility conditions with multiplication. Explicitly, the first hexagon is the following equality in
$\mathbf{Hom}_{\mathfrak{B}}(B\boxt B\boxt B,B)$:
\begin{equation}\label{eqn:BraidedAlgebra1}
    \begin{array}{@{}c@{\quad}c@{\quad}c@{}}
        \stringdiagramfigure{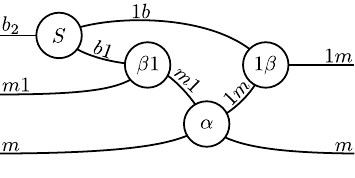}
        &
        =
        &
        \stringdiagramfigure{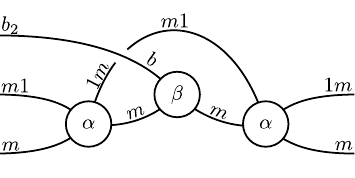} \quad .
    \end{array}
\end{equation}
The second hexagon is the following equality in
$\mathbf{Hom}_{\mathfrak{B}}(B\boxt B\boxt B,B)$:
\begin{equation}\label{eqn:BraidedAlgebra2}
    \begin{array}{@{}c@{\quad}c@{\quad}c@{}}
        \stringdiagramfigure{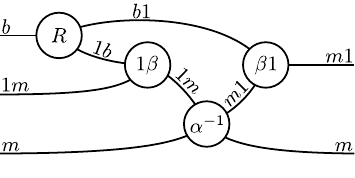}
        &
        =
        &
        \stringdiagramfigure{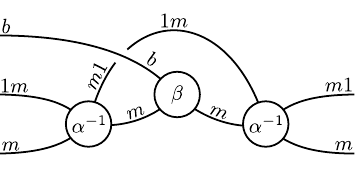} \quad .
    \end{array}
\end{equation}
Let $A$ and $B$ be braided algebras in $\mathfrak{B}$. A \emph{braided algebra 1-morphism} $f:A\to B$ is an algebra 1-morphism compatible with the braiding data. Explicitly, it satisfies the following equality in
$\mathbf{Hom}_{\mathfrak{B}}(A\boxt A,B)$:
\begin{equation}\label{eqn:BraidedAlgebra1Morphism}
    \begin{array}{@{}c@{\quad}c@{\quad}c@{}}
        \stringdiagramfigure{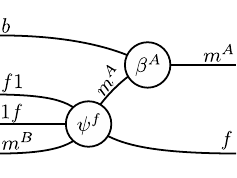}
        &
        =
        &
        \stringdiagramfigure{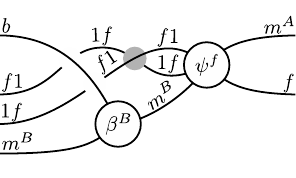} \quad .
    \end{array}
\end{equation}
\end{definition}

\subsection{Modules and bimodules}\label{subsec:modules}
Let $(\mathfrak{C},\Box,\I,\pmb{\phi})$ be a semistrict monoidal 2-category.

\begin{definition}\label{def:left_module}
Let $(A,m^A,i^A,\alpha^A,\lambda^A,\rho^A)$ be an algebra in $\mathfrak{C}$. A \emph{left $A$-module} is an object $M$ together with an action 1-morphism
\begin{equation}
    l^M:A\boxt M\to M,
\end{equation}
associator and unitor 2-isomorphisms
\begin{equation}
    \mu^M \colon l^M \circ (m^A \boxt 1_M)\to l^M \circ (1_A \boxt l^M),
    \qquad
    \lambda^M \colon l^M \circ (i^A \boxt 1_M)\to 1_M,
\end{equation}
satisfying the module pentagon and triangle equations. The pentagon is the following equality in
$\mathbf{Hom}_{\mathfrak{C}}(A\boxt A\boxt A\boxt M,M)$:
\begin{equation}\label{eqn:LeftModAssociativity}
    \begin{array}{@{}c@{\quad}c@{\quad}c@{}}
        \stringdiagramfigure{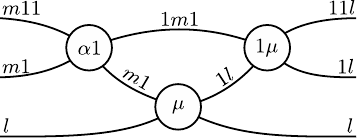}
        &
        =
        &
        \stringdiagramfigure{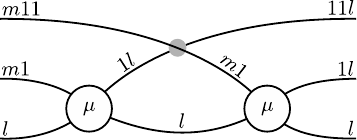} \quad .
    \end{array}
\end{equation}
The triangle is the following equality in
$\mathbf{Hom}_{\mathfrak{C}}(A\boxt M,M)$:
\begin{equation}\label{eqn:LeftModUnitality}
    \begin{array}{@{}c@{\quad}c@{\quad}c@{}}
        \stringdiagramfigure{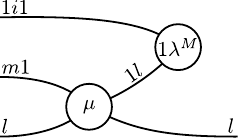}
        &
        =
        &
        \stringdiagramfigure{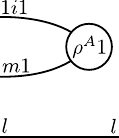} \quad .
    \end{array}
\end{equation}
\end{definition}

\begin{definition}\label{def:left_module_1morphism}
Let $A$ be an algebra in $\mathfrak{C}$, and let $M$ and $N$ be left $A$-modules. A \emph{left $A$-module 1-morphism} $f:M\to N$ is a 1-morphism in $\mathfrak{C}$ equipped with a 2-isomorphism
\begin{equation}
    \psi^f:l^N\circ(1_A\boxt f)\to f\circ l^M,
\end{equation}
compatible with the associators and unitors of $M$ and $N$. The associativity compatibility is the following equality in
$\mathbf{Hom}_{\mathfrak{C}}(A\boxt A\boxt M,N)$:
\begin{equation}\label{eqn:LeftMod1MorphismAssociativity}
    \begin{array}{@{}c@{\quad}c@{\quad}c@{}}
        \stringdiagramfigure{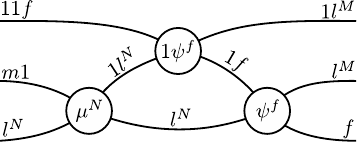}
        &
        =
        &
        \stringdiagramfigure{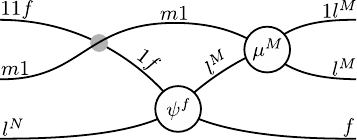} \quad .
    \end{array}
\end{equation}
The unitality compatibility is the following equality in
$\mathbf{Hom}_{\mathfrak{C}}(M,N)$:
\begin{equation}\label{eqn:LeftMod1MorphismUnitality}
    \begin{array}{@{}c@{\quad}c@{\quad}c@{}}
        \stringdiagramfigure{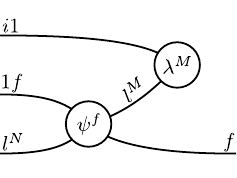}
        &
        =
        &
        \stringdiagramfigure{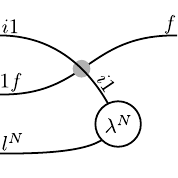} \quad .
    \end{array}
\end{equation}
\end{definition}

\begin{definition}\label{def:left_module_2morphism}
Let $A$ be an algebra in $\mathfrak{C}$, and let $M$ and $N$ be left $A$-modules. Let $f,g:M\to N$ be left $A$-module 1-morphisms. A \emph{left $A$-module 2-morphism} $\xi \colon f\to g$ is a 2-morphism in $\mathfrak{C}$ compatible with the structure 2-morphisms of $f$ and $g$. The action compatibility is the following equality in
$\mathbf{Hom}_{\mathfrak{C}}(A\boxt M,N)$:
\begin{equation}\label{eqn:LeftMod2MorphismCoh}
    \begin{array}{@{}c@{\quad}c@{\quad}c@{}}
        \stringdiagramfigure{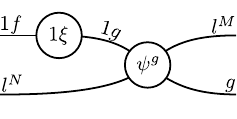}
        &
        =
        &
        \stringdiagramfigure{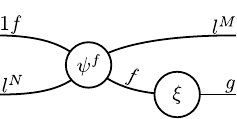} \quad .
    \end{array}
\end{equation}
\end{definition}

\begin{definition}\label{def:right_module}
Similarly, a \emph{right $A$-module} consists of an object $M$ in $\mathfrak{C}$, an action 1-morphism
\begin{equation}
    r^M:M\boxt A\to M,
\end{equation}
associator and unitor 2-isomorphisms
\begin{equation}
    \nu^M \colon r^M \circ (r^M \boxt 1_A)\to r^M \circ (1_M \boxt m^A),
    \qquad
    \rho^M \colon r^M \circ (1_M \boxt i^A)\to 1_M,
\end{equation}
satisfying the module pentagon and triangle equations. The pentagon is the following equality in
$\mathbf{Hom}_{\mathfrak{C}}(M\boxt A\boxt A\boxt A,M)$:
\begin{equation}\label{eqn:RightModAssociativity}
    \begin{array}{@{}c@{\quad}c@{\quad}c@{}}
        \stringdiagramfigure{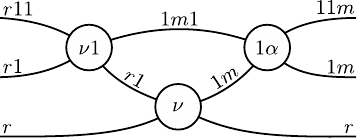}
        &
        =
        &
        \stringdiagramfigure{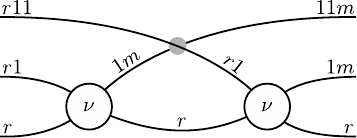} \quad .
    \end{array}
\end{equation}
Its triangle is the following equality in
$\mathbf{Hom}_{\mathfrak{C}}(M\boxt A,M)$:
\begin{equation}\label{eqn:RightModUnitality}
    \begin{array}{@{}c@{\quad}c@{\quad}c@{}}
        \stringdiagramfigure{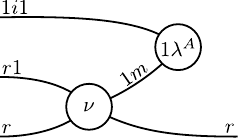}
        &
        =
        &
        \stringdiagramfigure{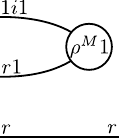} \quad .
    \end{array}
\end{equation}
\end{definition}

\begin{definition}\label{def:right_module_1morphism}
Let $A$ be an algebra in $\mathfrak{C}$, and let $M$ and $N$ be right $A$-modules. A \emph{right $A$-module 1-morphism} $f:M\to N$ is a 1-morphism in $\mathfrak{C}$ equipped with a 2-isomorphism
\begin{equation}
    \psi^f:r^N\circ(f\boxt 1_A)\to f\circ r^M,
\end{equation}
compatible with the associators and unitors of $M$ and $N$. The associativity compatibility is the following equality in
$\mathbf{Hom}_{\mathfrak{C}}(M\boxt A\boxt A,N)$:
\begin{equation}\label{eqn:RightMod1MorphismAssociativity}
    \begin{array}{@{}c@{\quad}c@{\quad}c@{}}
        \stringdiagramfigure{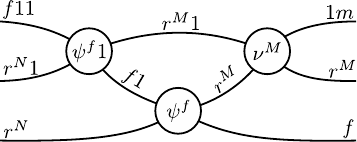}
        &
        =
        &
        \stringdiagramfigure{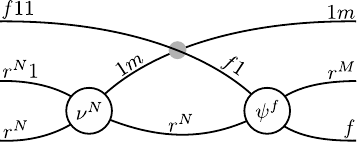} \quad .
    \end{array}
\end{equation}
The unitality compatibility is the following equality in
$\mathbf{Hom}_{\mathfrak{C}}(M,N)$:
\begin{equation}\label{eqn:RightMod1MorphismUnitality}
    \begin{array}{@{}c@{\quad}c@{\quad}c@{}}
        \stringdiagramfigure{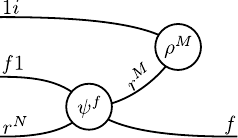}
        &
        =
        &
        \stringdiagramfigure{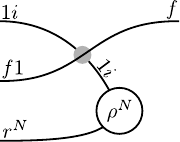} \quad .
    \end{array}
\end{equation}
\end{definition}

\begin{definition}\label{def:right_module_2morphism}
Let $A$ be an algebra in $\mathfrak{C}$, and let $M$ and $N$ be right $A$-modules. Let $f,g:M\to N$ be right $A$-module 1-morphisms. A \emph{right $A$-module 2-morphism} $\xi \colon f\to g$ is a 2-morphism in $\mathfrak{C}$ compatible with the structure 2-morphisms of $f$ and $g$. The action compatibility is the following equality in
$\mathbf{Hom}_{\mathfrak{C}}(M\boxt A,N)$:
\begin{equation}\label{eqn:RightMod2MorphismCoh}
    \begin{array}{@{}c@{\quad}c@{\quad}c@{}}
        \stringdiagramfigure{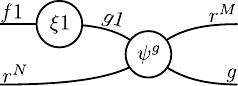}
        &
        =
        &
        \stringdiagramfigure{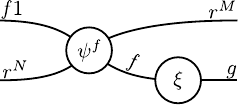} \quad .
    \end{array}
\end{equation}
\end{definition}

\begin{definition}
Let $A$ and $B$ be algebras. An \emph{$(A,B)$-bimodule} is an object $M$ with a left $A$-module structure $(l^M,\mu^M,\lambda^M)$, a right $B$-module structure $(r^M,\nu^M,\rho^M)$, and an extra 2-isomorphism
\begin{equation}
    \alpha^M \colon r^M \circ (l^M \boxt 1_B) \to l^M \circ (1_A \boxt r^M),
\end{equation}
satisfying the two extra pentagon coherence equations.

The left-action compatibility pentagon is the following equality in
$\mathbf{Hom}_{\mathfrak{C}}(A\boxt A\boxt M\boxt B,M)$:
\begin{equation}\label{eqn:BiModAssociativity1}
    \begin{array}{@{}c@{\quad}c@{\quad}c@{}}
        \stringdiagramfigure{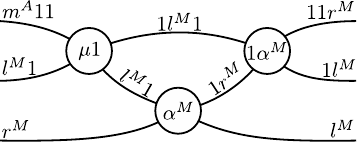}
        &
        =
        &
        \stringdiagramfigure{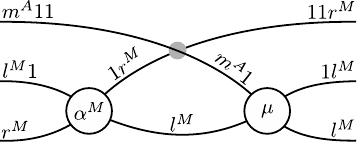} \quad .
    \end{array}
\end{equation}
The right-action compatibility pentagon is the following equality in
$\mathbf{Hom}_{\mathfrak{C}}(A\boxt M\boxt B\boxt B,M)$:
\begin{equation}\label{eqn:BiModAssociativity2}
    \begin{array}{@{}c@{\quad}c@{\quad}c@{}}
        \stringdiagramfigure{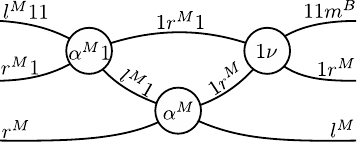}
        &
        =
        &
        \stringdiagramfigure{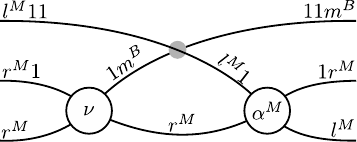} \quad .
    \end{array}
\end{equation}
\end{definition}

\begin{definition}\label{def:bimodule_1morphism}
Let $A$ and $B$ be algebras in $\mathfrak{C}$, and let $M$ and $N$ be $(A,B)$-bimodules. An \emph{$(A,B)$-bimodule 1-morphism} $f:M\to N$ is a 1-morphism in $\mathfrak{C}$ that is simultaneously equipped with a left $A$-module 1-morphism structure $\varphi^f$ and a right $B$-module 1-morphism structure $\psi^f$, satisfying the extra bimodule compatibility condition in
$\mathbf{Hom}_{\mathfrak{C}}(A\boxt M\boxt B,N)$:
\begin{equation}\label{eqn:Bimod1MorphismCoh}
    \begin{array}{@{}c@{\quad}c@{\quad}c@{}}
        \stringdiagramfigure{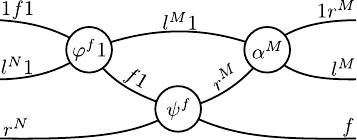}
        &
        =
        &
        \stringdiagramfigure{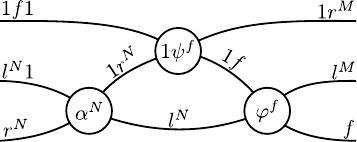} \quad .
    \end{array}
\end{equation}
\end{definition}

\begin{remark}
    2-morphisms between bimodule 1-morphisms are just 2-morphisms in $\mathfrak{C}$ compatible with the left and right module 1-morphism structures.
\end{remark}

\begin{notation}
We denote by $\LMod_{\mathfrak{C}}(A)$, $\Mod_{\mathfrak{C}}(A)$, and $\Bimod_{\mathfrak{C}}(A,B)$ the 2-categories of left $A$-modules, right $A$-modules, and $(A,B)$-bimodules in $\mathfrak{C}$, respectively.
\end{notation}

\subsection{Relative tensor products}
Let $(\mathfrak{C},\Box,\I,\pmb{\phi})$ be a semistrict monoidal 2-category.

\begin{definition}\label{def:balanced_1morphism}
Let $A$ be an algebra in $\mathfrak{C}$, $M$ be a right $A$-module and $N$ a left $A$-module. A \emph{$A$-balanced 1-morphism} from $M\boxt N$ to an object $L$ consists of a 1-morphism $f \colon M\boxt N\to L$ together with a 2-isomorphism
\begin{equation}
    \tau^f \colon f\circ(r^M \boxt 1_N) \to f \circ (1_M \boxt l^N),
\end{equation}
satisfying associativity and unitality coherence conditions.
The associativity condition is the pentagon equality in
$\mathbf{Hom}_{\mathfrak{C}}(M\boxt A\boxt A\boxt N,L)$:
\begin{equation}\label{eqn:Balanced1MorAssociativity}
    \begin{array}{@{}c@{\quad}c@{\quad}c@{}}
        \stringdiagramfigure{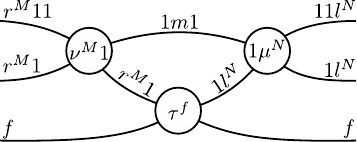}
        &
        =
        &
        \stringdiagramfigure{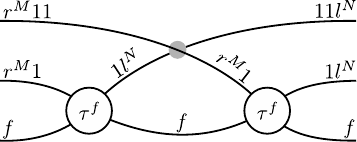} \quad .
    \end{array}
\end{equation}
The unitality condition is the triangle equality in
$\mathbf{Hom}_{\mathfrak{C}}(M\boxt N,L)$:
\begin{equation}\label{eqn:Balanced1MorUnitality}
    \begin{array}{@{}c@{\quad}c@{\quad}c@{}}
        \stringdiagramfigure{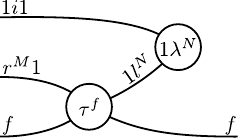}
        &
        =
        &
        \stringdiagramfigure{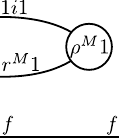} \quad .
    \end{array}
\end{equation}
\end{definition}

\begin{definition}\label{def:balanced_2morphism}
Let $f,g \colon M \boxt N \to L$ be two $A$-balanced 1-morphisms. A \emph{$A$-balanced 2-morphism} from $f$ to $g$ consists of a 2-morphism $\xi \colon f \to g$ satisfying the following equality in
$\mathbf{Hom}_{\mathfrak{C}}(M\boxt A\boxt N,L)$:
\begin{equation}\label{eqn:Balanced2Mor}
    \begin{array}{@{}c@{\quad}c@{\quad}c@{}}
        \stringdiagramfigure{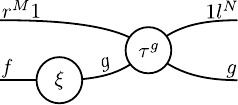}
        &
        =
        &
        \stringdiagramfigure{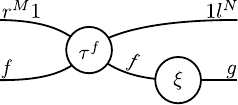} \quad .
    \end{array}
\end{equation}
\end{definition}

\begin{definition}\label{def:relative_tensor_product}
The \emph{relative tensor product} $M\boxtsub{A}N$, when it exists, is an object equipped with a universal $A$-balanced 1-morphism
\begin{equation}\label{eqn:relative_tensor_product_balanced_1morphism}
    t_{M,N}\colon M\boxt N\to M\boxtsub{A}N.
\end{equation}
Equivalently, it co-represents the 2-functor $\mathfrak{C} \to \mathbf{2Vect}$ which sends any object $L$ to the category of $A$-balanced morphisms $M\boxt N \to L$.
\end{definition}

\begin{proposition}\label{prop:associativity_and_unitality_of_relative_tensor_products}
Assume the relevant relative tensor products exist. For algebras $A,B,C,D$, an $(A,B)$-bimodule $M$, a $(B,C)$-bimodule $N$, and a $(C,D)$-bimodule $P$, there is a canonical $(A,D)$-bimodule 1-isomorphism
\begin{equation}\label{eqn:relative_tensor_product_associator}
    (M\boxtsub{B}N)\boxtsub{C}P
    \simeq
    M\boxtsub{B}(N\boxtsub{C}P),
\end{equation}
and $(A,B)$-bimodules 1-isomorphisms 
\begin{equation}\label{eqn:relative_tensor_product_unitor}
    A\boxtsub{A}M\simeq M \quad \text{and} \quad M\boxtsub{B}B\simeq M.
\end{equation}
\end{proposition}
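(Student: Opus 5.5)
We prove both statements by the Yoneda principle for the co-representing objects in Definition \ref{def:relative_tensor_product}. Whenever an object co-represents a 2-functor $\mathfrak{C}\to\mathbf{2Vect}$, it is determined up to an equivalence that is unique up to unique 2-isomorphism. So it suffices to produce equivalences between the corresponding categories of balanced 1-morphisms that are natural in the target $L$. The bimodule structures are then transported through the universal property.

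\emph{Associativity.} Define a \emph{$(B,C)$-bibalanced} 1-morphism $M\boxt N\boxt P\to L$ to be a 1-morphism $f$ together with two balancing 2-isomorphisms, one $B$-balancing in the $M,N$ slots and one $C$-balancing in the $N,P$ slots. Each satisfies the pentagon and triangle of Definition \ref{def:balanced_1morphism}. The two are further required to be compatible with the bimodule associator $\alpha^N$ and with the interchangers $\pmb{\phi}$; this compatibility is a single equation in $\mathbf{Hom}_{\mathfrak{C}}(M\boxt B\boxt N\boxt C\boxt P,L)$. We then show that precomposition with $t_{M\boxtsub{B}N,P}\circ(t_{M,N}\boxt 1_P)$ gives an equivalence
\[
\{C\text{-balanced } (M\boxtsub{B}N)\boxt P\to L\}\;\simeq\;\{(B,C)\text{-bibalanced } M\boxt N\boxt P\to L\},
\]
and symmetrically for $M\boxtsub{B}(N\boxtsub{C}P)$.

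The key input, and the main obstacle, is the following lemma: tensoring with a fixed object $P$ preserves the universal property, so that $(M\boxtsub{B}N)\boxt P$ co-represents $B$-balanced maps out of $M\boxt N\boxt P$, naturally in $L$. In general this can fail, since $-\boxt P$ need not preserve these colimits. In the semistrict setting we handle it by one of two routes. The first is to assume, as is implicit in ``assume the relevant relative tensor products exist,'' that the relative tensor products are preserved by $-\boxt P$ and $P\boxt -$. The second, which I would try first, is to exploit a duality or adjointness hypothesis on $\mathfrak{C}$ so that $-\boxt P$ has a right adjoint and therefore preserves co-representing objects.

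Granting the lemma, we construct the inverse as follows. A bibalanced $f$ first descends along $t_{M,N}\boxt 1_P$ to a $B$-balanced map, using the $B$-balancing. The $C$-balancing then descends as well, because the compatibility equation says precisely that it is a $B$-balanced 2-morphism (Definition \ref{def:balanced_2morphism}). The coherence equations descend by the 2-dimensional part of the universal property, namely full faithfulness on balanced 2-morphisms. The $A$-left and $D$-right actions are induced in the same manner: $l^M\boxt 1_N\boxt 1_P$ is compatible with all the balancings, so it descends. The resulting equivalence is an $(A,D)$-bimodule 1-isomorphism because both sides descend the same structure.

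\emph{Unitality.} For $A\boxtsub{A}M\simeq M$, we check that $l^M\colon A\boxt M\to M$, balanced by $\mu^M$, is universal. Given an $A$-balanced $g\colon A\boxt M\to L$, the inverse sends $g$ to $g\circ(i^A\boxt 1_M)$. The composite $g\circ(i\boxt 1)\circ l^M$ is identified with $g$ via
\[
g\circ(1\boxt l)\circ(i\boxt 1\boxt 1)\xleftarrow{\tau^g} g\circ(m\boxt 1)\circ(i\boxt 1\boxt 1)\xrightarrow{\lambda} g,
\]
together with an interchanger. The other composite is identified using $\lambda^M$. The triangle equations (\ref{eqn:LeftModUnitality}) and (\ref{eqn:Balanced1MorUnitality}), together with the pentagon, show that these identifications are balanced 2-morphisms. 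The right action of $B$ is compatible via $\alpha^M$, so the equivalence is a bimodule equivalence. The case $M\boxtsub{B}B\simeq M$ is the mirror image, obtained by passing to $\mathfrak{C}^{1mp}$ (Notation \ref{not:monoidal_opposite}).
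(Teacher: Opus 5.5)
The paper states this proposition without proof, as standard background, so there is no argument of the paper's to compare against. Judged on its own, your proposal is the standard Yoneda/co-representability argument, and it is sound.

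You are right that associativity needs more than existence. Unless $t_{M,N}\boxt 1_X$ is again universal for $B$-balanced maps out of $M\boxt N\boxt X$, you cannot even descend $t_{M,N}\circ(1_M\boxt r^N)$ to a right $C$-action on $M\boxtsub{B}N$. Then $(M\boxtsub{B}N)\boxtsub{C}P$ need not be defined at all. So the preservation hypothesis is implicit in the statement rather than a gap in your proof. It holds automatically in the rigid settings the paper actually uses: in fusion 2-categories, $-\boxt X$ and $X\boxt -$ have right 2-adjoints given by tensoring with duals, which is your second route.

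Unitality needs no such hypothesis. Your argument that $(l^M,\mu^M)$ is universal applies verbatim to any left $A$-module, in particular to $M\boxt X$ with action $l^M\boxt 1_X$. This is exactly what makes the right $B$-action on $A\boxtsub{A}M$ well defined.

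A few small corrections:

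\begin{itemize}
\item In your displayed equivalence the precomposition should be with $t_{M,N}\boxt 1_P$ alone. The composite with $t_{M\boxtsub{B}N,P}$ is what relates maps out of $(M\boxtsub{B}N)\boxtsub{C}P$ to bibalanced maps.
\item Descending the $C$-balancing and checking its pentagon also uses the preservation lemma with $C\boxt P$ and $C\boxt C\boxt P$ in place of $P$.
\item In the unitality part you must also match the left $A$-action on $A\boxtsub{A}M$ (again via $\mu^M$), not only the right $B$-action.
\item To show that your comparison $g\circ(i\boxt 1_M)\circ l^M\cong g$ is a balanced 2-morphism, you need the balanced pentagon for $\tau^g$ restricted along $i$ in the first slot. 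You also need the derived compatibility of $\lambda$ with $\alpha$ for the algebra $A$, which follows from its pentagon and triangle. The module triangle plays a different role: it is what makes $(l^M,\mu^M)$ balanced in the first place.
\end{itemize}
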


\begin{definition}\label{def:Morita_equivalence}
Two algebras $A$ and $B$ in $\mathfrak{C}$ are \emph{Morita equivalent} if there are bimodules ${}_A M_B$ and ${}_B N_A$ together with bimodule 1-isomorphisms $M\boxtsub{B}N\simeq A$ and $N\boxtsub{A}M\simeq B$.
\end{definition}

\section{Monoidal 2-Adjunctions}\label{sec:monoidal_2-adjunction}
In this section, we establish the categorification of the well-known 1-categorical result that the left adjoint to a lax monoidal functor is oplax monoidal, and the right adjoint to an oplax monoidal functor is lax monoidal. This general framework will be crucial for our later analysis of base change for module 2-categories and the construction of full centers.

\subsection{Monoidal 2-functors}\label{subsec:monoidal_2-functor}
Suppose $\mathfrak{C}$ and $\mathfrak{D}$ are two semistrict monoidal 2-categories.

\begin{definition} \label{def:monoidal_2-functor}
    A \textit{monoidal 2-functor} from $\mathfrak{C}$ to $\mathfrak{D}$ consists of:
    \begin{enumerate}
        \item[1.] An underlying 2-functor\footnote{We omit the unital constraints of the underlying 2-functor $F$ in the following diagrams. One can also assume that we start with those \textit{strictly unital} 2-functors.} $F:\mathfrak{C} \to \mathfrak{D}$;

        \item[2.] A 2-natural isomorphism $\chi_{x,y}:F(x) \, \Box^\mathfrak{D} \, F(y) \to F(x \, \Box^\mathfrak{C} \, y)$ given on objects $x,y$ in $\mathfrak{C}$;

        \item[3.] A 1-isomorphism $\iota:\mathbf{I}^\mathfrak{D} \to F(\mathbf{I}^\mathfrak{C})$;

        \item[4.] Invertible modifications
    \end{enumerate}
    \[\begin{tikzcd}[column sep=45pt,row sep=25pt]
            {F(x) \, \Box^\mathfrak{D} \, F(y) \, \Box^\mathfrak{D} \, F(z)}
                \arrow[r,"\chi_{x,y} \, \Box^\mathfrak{D} \, F(z)"]
                \arrow[d,"F(x) \, \Box^\mathfrak{D} \, \chi_{y,z}"']
            & {F(x \, \Box^\mathfrak{C} \, y) \, \Box^\mathfrak{D} \, F(z)}
                \arrow[d,"\chi_{xy,z}"]
                \arrow[dl,Rightarrow,shorten <=25pt, shorten >=25pt,"\omega_{x,y,z}"']
            \\ {F(x) \, \Box^\mathfrak{D} \, F(y \, \Box^\mathfrak{C} \, z)}
                \arrow[r,"\chi_{x,yz}"']
            & {F(x \, \Box^\mathfrak{C} \, y \, \Box^\mathfrak{C} \, z)}
        \end{tikzcd}, \]
        \[\begin{tikzcd}[column sep=40pt]
            {F(\mathbf{I}^\mathfrak{C}) \, \Box^\mathfrak{D} \, F(x)}
                \arrow[dd,"\chi_{\mathbf{I},x}"']
            & {}
            & {F(x) \, \Box^\mathfrak{D} \, F(\mathbf{I}^\mathfrak{C})}
                \arrow[dd,"\chi_{x,\mathbf{I}}"]
            \\ {}
            & {F(x)}
                \arrow[dl,equal]
                \arrow[dr,equal]
                \arrow[ul,"\iota \, \Box^\mathfrak{D} \, F(x)"']
                \arrow[ur,"F(x) \, \Box^\mathfrak{D} \, \iota"]
                \arrow[l,Leftarrow,"\gamma_x"',shorten <= 30pt, shorten >= 30pt]
                \arrow[r,Leftarrow,"\delta_x",shorten <= 30pt, shorten >= 30pt]
            & {}                
            \\ {F(x)}
            & {}
            & {F(x)}
        \end{tikzcd},\]
        \begin{enumerate}
            \item[] given on objects $x,y,z$ in $\mathfrak{C}$;
        \end{enumerate}
        subject to the following conditions:
        \begin{enumerate}
            \item[a.] For any objects $x,y,z,w$ in $\mathfrak{C}$, the equation holds
        \end{enumerate}

        \begin{equation}\label{eqn:Monoidal2FunctorAssociativity}
        \begin{tabular}{@{}ccc@{}}
    
        \stringdiagramfigure{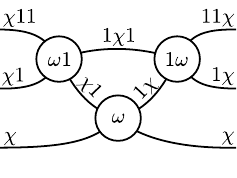} & $=$ &
        \stringdiagramfigure{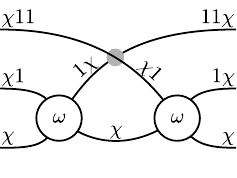} 
        
        \end{tabular}
        \end{equation}

        \begin{enumerate}
            \item[] in $\Hom_\mathfrak{D}(F(x) \, \Box^\mathfrak{D} \, F(y) \, \Box^\mathfrak{D} \, F(z) \, \Box^\mathfrak{D} \, F(w),F(x \, \Box^\mathfrak{C} \, y \, \Box^\mathfrak{C} \, z \, \Box^\mathfrak{C} \, w ))$;
        \end{enumerate}

        \begin{enumerate}
            \item[b.] For any objects $x,y$ in $\mathfrak{C}$, the equation holds
        \end{enumerate}

        \begin{equation}\label{eqn:Monoidal2FunctorUnitality}
        \begin{tabular}{@{}ccc@{}}
    
        \stringdiagramfigure{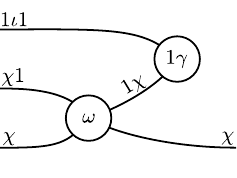} & $=$ &
        \stringdiagramfigure{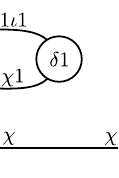} 
        
        \end{tabular}
        \end{equation}

        \begin{enumerate}
            \item[] in $\Hom_\mathfrak{D}(F(x) \, \Box^\mathfrak{D} \, F(y),F(x \, \Box^\mathfrak{C} \, y))$.
        \end{enumerate}
\end{definition}

\begin{notation} \label{not:lax_monoidal_2-functor} 
    In the above definition, if we do not require the 2-natural transformation $\chi$ and 1-morphism $\iota$ to be invertible, then it is called a \textit{lax} monoidal 2-functor, if the directions of $\chi$ and $\iota$ are the same as given, or an \textit{oplax} monoidal 2-functor, if the directions of $\chi$ and $\iota$ are reversed. The monoidal 2-functors we defined above are called \textit{strong} monoidal 2-functors if we need to emphasize that $\chi$ and $\iota$ are invertible.
\end{notation}

\subsection{Monoidal 2-natural Transformations} \label{subsec:monoidal_2NatTrans}

\begin{definition} \label{def:monoidal_2NatTrans}
    Let $F$ and $G$ be two monoidal 2-functors given from $\mathfrak{C}$ to $\mathfrak{D}$. A \textit{monoidal 2-natural transformation} from $F$ to $G$ consists of:
    \begin{enumerate}
        \item[1.] An underlying 2-natural transformation $\eta:F \to G$;

        \item[2.] Invertible modification
        \[\begin{tikzcd}[column sep=40pt,row sep=30pt]
            {F(x) \, \Box^\mathfrak{D} \, F(y)}
                \arrow[r,"\eta_x \, \Box^\mathfrak{D} \, F(y)"]
                \arrow[d,"\chi^F_{x,y}"']
            & {G(x) \, \Box^\mathfrak{D} \, F(y)}
                \arrow[r,"G(x) \, \Box^\mathfrak{D} \eta_y"]
            & {G(x) \, \Box^\mathfrak{D} \, G(y)}
                \arrow[d,"\chi^G_{x,y}"]
                \arrow[lld,Rightarrow,"\Pi_{x,y}"',shorten <=70pt, shorten >=50pt]
            \\ {F(x \, \Box^\mathfrak{C} \, y)}
                \arrow[rr,"\eta_{xy}"']
            & {}
            & {G(x \, \Box^\mathfrak{C} \, y)}
        \end{tikzcd},\] and a 2-isomorphism
        \[\begin{tikzcd}[row sep=30pt]
            {\mathbf{I}^\mathfrak{D}}
                \arrow[rr,"\iota^G"]
                \arrow[rd,"\iota^F"']
            & {}
            & {G(\mathbf{I}^\mathfrak{C})}
            \\ {}
            & {F(\mathbf{I}^\mathfrak{C})}
                \arrow[ur,"\eta_\mathbf{I}"']
                \arrow[u,Rightarrow,shorten <=10pt, shorten >=5pt,"M"]
            & {}
        \end{tikzcd};\]
    \end{enumerate}
    subject to the following conditions:
    \begin{enumerate}
        \item[a.] For any objects $x,y,z$ in $\mathfrak{C}$, the equation holds
    \end{enumerate}

    \begin{equation}\label{eqn:Monoidal2NatTransAssociativity}
    \begin{tabular}{@{}ccc@{}}
    
    \stringdiagramfigure{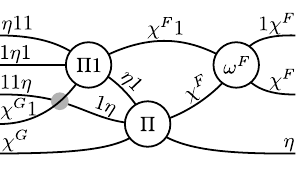} & $=$ &
    \stringdiagramfigure{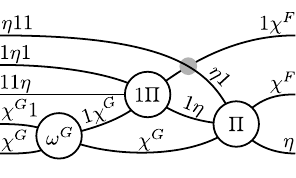} 
        
    \end{tabular}
    \end{equation}

    \begin{enumerate}
        \item[] in $\Hom_\mathfrak{D}(F(x) \, \Box^\mathfrak{D} \, F(y) \, \Box^\mathfrak{D} \, F(z),G(x \, \Box^\mathfrak{C} \, y \, \Box^\mathfrak{C} \, z))$;
    \end{enumerate}
        
    \begin{enumerate}
        \item[b.] For any object $x$ in $\mathfrak{C}$, the equation holds
    \end{enumerate}

    \begin{equation}\label{eqn:Monoidal2NatTransUnitality1}
    \begin{tabular}{@{}ccc@{}}
    
    \stringdiagramfigure{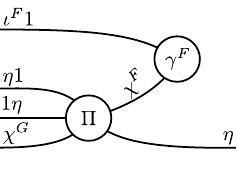} & $=$ &
    \stringdiagramfigure{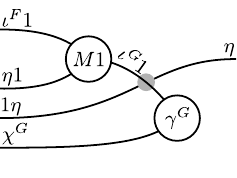} 
        
    \end{tabular}
    \end{equation}

    \begin{enumerate}
        \item[] in $\Hom_\mathfrak{D}(F(x),G(x))$;
    \end{enumerate}
    
    \begin{enumerate}
        \item[c.] For any object $x$ in $\mathfrak{C}$, the equation holds
    \end{enumerate}

    \begin{equation}\label{eqn:Monoidal2NatTransUnitality2}
    \begin{tabular}{@{}ccc@{}}
    
    \stringdiagramfigure{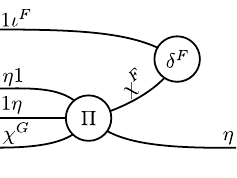} & $=$ &
    \stringdiagramfigure{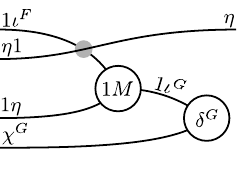} 
        
    \end{tabular}
    \end{equation}

    \begin{enumerate}
        \item[] in $\Hom_\mathfrak{D}(F(x),G(x))$.
    \end{enumerate}
\end{definition}

\begin{notation} \label{not:lax_monoidal_2NatTrans}
    In the above definition, if we do not require the modifications $\Pi$ and $M$ to be invertible, then it is called a \textit{lax} monoidal 2-natural transformation, if the direction of $\Pi$ is the same as given but the direction of $M$ is reversed, or an \textit{oplax} monoidal 2-natural transformation, if the direction of $\Pi$ is reversed but the direction of $M$ is the same as given. The monoidal 2-natural transformations we defined above are called \textit{strong} monoidal 2-natural transformations.
\end{notation}

\begin{remark}\label{rmk:various_lax_conventions}
    Notice that the prefix \textit{``(op)lax"} in:
    \begin{itemize}
        \item (op)lax 2-functors,
        
        \item (op)lax 2-natural transformations,
        
        \item (op)lax monoidal 2-functors,
        
        \item (op)lax monoidal 2-natural transformations,
    \end{itemize} all has different meanings, because we allow non-invertibility on different categorical levels. So there are at least $2^4 = 16$ different combinations of monoidal 2-functors and monoidal 2-natural transformations, depending on where we turn on the invertibility. Even worse, the choices of directions for various (op)lax notions do not always agree in the literature. Some authors use \textit{lax and colax} instead of \textit{lax and oplax}. This situation unfortunately makes the notations messy and confusing. We will follow our conventions explicitly throughout this article.
\end{remark}

\subsection{Monoidal modifications}\label{subsec:monoidal_modification}

\begin{definition} \label{def:monoidal_modification}
    Let $\eta$ and $\phi$ be two monoidal 2-natural transformations from $F$ to $G$, which are monoidal 2-functors given from $\mathfrak{C}$ to $\mathfrak{D}$. A \textit{monoidal modification} from $\eta$ to $\phi$ consists of an underlying modification $\theta:\eta \to \phi$ subject to the two equations:
    \begin{enumerate}
        \item[a.] For any objects $x,y$ in $\mathfrak{C}$, the equation holds
    \end{enumerate}

    \begin{equation}\label{eqn:MonoidalModificationAssociativity}
    \begin{tabular}{@{}ccc@{}}
    
    \stringdiagramfigure{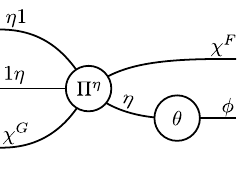} & $=$ &
    \stringdiagramfigure{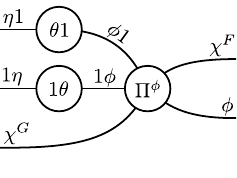} 
        
    \end{tabular}
    \end{equation}

    \begin{enumerate}
        \item[] in $\Hom_\mathfrak{D}(F(x) \, \Box^\mathfrak{D} \, F(y),G(x \, \Box^\mathfrak{C} \, y))$;
    \end{enumerate}

    \begin{enumerate}
        \item[b.] The following equation holds
    \end{enumerate}

    \begin{equation}\label{eqn:MonoidalModificationUnitality}
    \begin{tabular}{@{}ccc@{}}
    
    \stringdiagramfigure{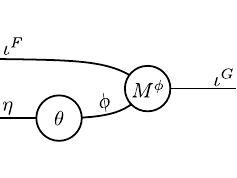} & $=$ &
    \stringdiagramfigure{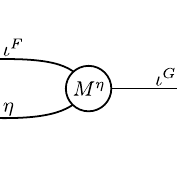} 
        
    \end{tabular}
    \end{equation}

    \begin{enumerate}
        \item[] in $\Hom_\mathfrak{D}(\mathbf{I}^\mathfrak{D},G(\mathbf{I}^\mathfrak{C}))$.
    \end{enumerate}
\end{definition}

\begin{construction}\label{cstr:3-category_of_monoidal_2-categories}
    One has a 3-category $\mathbf{M2Cat}$ consisting of:
    \begin{itemize}[nosep]
        \item Monoidal 2-categories as objects,

        \item Monoidal 2-functors as 1-morphisms,

        \item Monoidal 2-natural transformations as 2-morphisms,

        \item Monoidal modifications as 3-morphisms.
    \end{itemize}

    It has two variants, which we will use later:
    \begin{itemize}[nosep]
        \item $\mathbf{M2Cat}^{lax}$, where we allow 1-morphisms to be lax monoidal 2-functors, but still require 2-morphisms to be strong monoidal 2-natural transformations and 3-morphisms to be monoidal modifications;

        \item $\mathbf{M2Cat}^{oplax}$, where we allow 1-morphisms to be oplax monoidal 2-functors, but still require 2-morphisms to be strong monoidal 2-natural transformations and 3-morphisms to be monoidal modifications.
    \end{itemize}
\end{construction}

\subsection{2-adjunctions}\label{subsec:2-adjunction}

\begin{definition} \label{def:2-adjunction}
    A \textit{2-adjunction} between 2-categories $\mathfrak{C}$, $\mathfrak{D}$ consists of:
    \begin{itemize}
        \item 2-functors $F: \mathfrak{C} \to \mathfrak{D}$, $G: \mathfrak{D} \to \mathfrak{C}$;
        
        \item 2-natural transformation, called \textit{unit}, $\eta: \mathrm{Id}_\mathfrak{C} \to G \circ F$;
        
        \item 2-natural transformation, called \textit{counit}, $\epsilon: F \circ G \to \mathrm{Id}_\mathfrak{D}$;
        
        \item Invertible modifications, called \textit{triangulator}, categorifying the zigzag equations: \[\Phi: (\epsilon \circ_1 \mathbf{1}_F) \circ_2 (\mathbf{1}_F \circ_1 \eta) \to \mathbf{1}_F,\] \[\Psi: (\mathbf{1}_G \circ_1 \epsilon) \circ_2 (\eta \circ_1 \mathbf{1}_G) \to \mathbf{1}_G;\]
    \end{itemize} satisfying the following \textit{swallowtail} equations.

    \begin{enumerate}
        \item[a.] The following equation holds in $\Hom_{\mathbf{Fun}(\mathfrak{C},\mathfrak{D})}(F \circ G,\mathrm{Id}_\mathfrak{D})$:
    \end{enumerate}

    \begin{equation}\label{eqn:swallowtail1}
    \begin{tabular}{@{}cccc@{}}

    \stringdiagramfigure{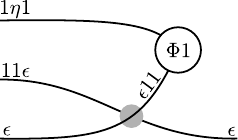} & $=$ &
    \stringdiagramfigure{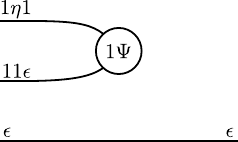} & ;
        
    \end{tabular}
    \end{equation}

    \begin{enumerate}
        \item[b.] The following equation holds in $\Hom_{\mathbf{Fun}(\mathfrak{C},\mathfrak{D})}(\mathrm{Id}_\mathfrak{C}, G \circ F)$:
    \end{enumerate}

    \begin{equation}\label{eqn:swallowtail2}
    \begin{tabular}{@{}cccc@{}}

    \stringdiagramfigure{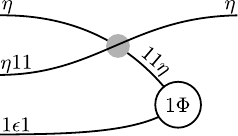} & $=$ &
    \stringdiagramfigure{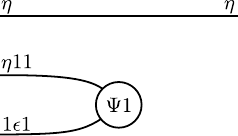} & .
        
    \end{tabular}
    \end{equation}
    
    We say $F$ is the \emph{left adjoint} of $G$ or $G$ is the \emph{right adjoint} of $F$. The 2-adjunction is \textit{strict} if $\Phi$ and $\Psi$ are identities.
\end{definition}

By a similar argument as in the 1-categorical case, one can show that data of the 2-adjunction is \emph{universal}.

\begin{lemma}\label{lem:2-adjunction_is_universal}
    For a given 2-functor $F:\mathfrak{C} \to \mathfrak{D}$, any 2-adjunction with $F$ as the left adjoint is uniquely determined. Dually, for a given 2-functor $G:\mathfrak{D} \to \mathfrak{C}$, any 2-adjunction with $G$ as the right adjoint is uniquely determined.
\end{lemma}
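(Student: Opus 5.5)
The plan is to follow the 1-categorical argument, one dimension up, and to read ``uniquely determined'' as: unique up to an equivalence of 2-adjunctions, which is itself unique up to a unique invertible modification. I treat only the case of a fixed left adjoint $F$. The dual statement for a fixed right adjoint $G$ then follows by applying the first to the opposite 2-categories $\mathfrak{C}^{op}$ and $\mathfrak{D}^{op}$, which exchange the roles of left and right adjoints.

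Suppose $(G,\eta,\epsilon,\Phi,\Psi)$ and $(G',\eta',\epsilon',\Phi',\Psi')$ are two 2-adjunctions with left adjoint $F$.
\begin{enumerate}
\item I define comparison 2-natural transformations
\[
\theta:=(\mathbf{1}_{G'}\circ_1\epsilon)\circ_2(\eta'\circ_1\mathbf{1}_G)\colon G\to G',\qquad
\theta':=(\mathbf{1}_{G}\circ_1\epsilon')\circ_2(\eta\circ_1\mathbf{1}_{G'})\colon G'\to G.
\]
\item I build an invertible modification $\theta'\circ_2\theta\cong\mathbf{1}_G$, working in the string-diagram calculus of $\mathbf{Fun}$.
\begin{itemize}
\item First, use the interchange (naturality) isomorphisms of $\eta$ and $\epsilon'$ to slide the middle $\epsilon'$ past $\epsilon$. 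This produces a composite containing the zigzag $(\epsilon'\circ_1\mathbf{1}_F)\circ_2(\mathbf{1}_F\circ_1\eta')$.
\item Collapse that zigzag with $\Phi'$.
\item Collapse the remaining zigzag $(\mathbf{1}_G\circ_1\epsilon)\circ_2(\eta\circ_1\mathbf{1}_G)$ with $\Psi$.
\end{itemize}
The isomorphism $\theta\circ_2\theta'\cong\mathbf{1}_{G'}$ is built symmetrically, so $\theta$ is an adjoint equivalence.
\item I produce invertible modifications $\theta\circ_1\eta\cong\eta'$ and $\epsilon'\circ_2(\mathbf{1}_F\circ_1\theta)\cong\epsilon$, again from one triangulator and naturality. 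These say that the equivalence transports the unit and counit.
\item To make the equivalence of 2-adjunctions precise, I also require that these modifications carry $\Phi,\Psi$ to $\Phi',\Psi'$.
\end{enumerate}

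A cleaner route for the existence part, which I would use to organise the argument, goes through representability. A 2-adjunction yields, for each $d$, an equivalence of hom-categories
\[
\mathbf{Hom}_{\mathfrak{D}}(Fc,d)\simeq\mathbf{Hom}_{\mathfrak{C}}(c,Gd),
\]
pseudonatural in $c$ and $d$: it sends $f$ to $G(f)\circ\eta_c$, with inverse $g\mapsto\epsilon_d\circ F(g)$, and $\Phi,\Psi$ provide the unit and counit of the equivalence. So $Gd$ represents the 2-functor $\mathbf{Hom}_{\mathfrak{D}}(F-,d)$. By the bicategorical Yoneda lemma, a representing object is unique up to an equivalence that is unique up to unique 2-isomorphism, and this equivalence is exactly $\theta_d$ above. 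Pseudonaturality in $d$ then assembles the $\theta_d$ into the 2-natural equivalence $\theta$, and the data $\eta,\epsilon$ are recovered from the representation, which gives the transport in step 3.

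I expect the main obstacle to be the coherence verification in step 4: checking that the modifications constructed in steps 2 and 3 respect the triangulators. For that I would apply the swallowtail equations \eqref{eqn:swallowtail1} and \eqref{eqn:swallowtail2} for both adjunctions. The idea is to rewrite each composite of triangulators and naturality cells into a normal form in which every zigzag is cancelled by a single $\Phi$ or $\Psi$. The swallowtails are precisely what guarantee that the two ways of cancelling a doubled zigzag agree, and this is what pins down the equivalence uniquely. To avoid redoing this by hand, I would first replace each 2-adjunction by an adjoint-equivalence-compatible one in which the swallowtails make the induced hom-equivalence an adjoint equivalence, and then quote the uniqueness clause of the Yoneda lemma.
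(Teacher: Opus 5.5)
Your proposal is correct and is essentially the argument the paper has in mind: the paper only asserts that the 1-categorical proof carries over, and your comparison transformation $\theta=(\mathbf{1}_{G'}\circ_1\epsilon)\circ_2(\eta'\circ_1\mathbf{1}_G)$ with its symmetric pseudo-inverse is that argument, spelled out in more detail than the paper gives. It is checked via naturality, the triangulators and the swallowtails, or equivalently packaged through the bicategorical Yoneda lemma. The ``replacement'' step at the end is unnecessary: the swallowtail equations in Definition~\ref{def:2-adjunction} already make the induced hom-equivalences adjoint equivalences, so you can apply the uniqueness clause of the Yoneda lemma to the given 2-adjunction data directly.
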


\subsection{Monoidal 2-adjunctions}\label{subsec:monoidal_2-adjunction}
Let us introduce the general construction of monoidal 2-adjunctions first.

\begin{construction}\label{cstr:lax_monoidal_structure_on_the_right_adjoint}
    Let $F:\mathfrak{C} \to \mathfrak{D}$ be an oplax monoidal 2-functor, with a 2-adjunction $(F,G,\eta,\epsilon,\Phi,\Psi)$ where $F$ is the left adjoint. Then there is a canonical lax monoidal structure on $G$:
    \begin{enumerate}
        \item For objects $x$ and $y$ in $\mathfrak{D}$, a 1-morphism $\chi^G_{x,y}:G(x) \, \Box^\mathfrak{C} \, G(y) \to G(x \, \Box^\mathfrak{D} \, y)$ is given by the following composition:
        \begin{equation}\label{eqn:lax_monoidal_structure_on_right_adjoint}
            G(x) \boxt^\mathfrak{C} G(y) \xrightarrow{\eta_{Gx \boxt Gy}} GF(G(x) \boxt^\mathfrak{C} G(y)) \xrightarrow{G(\widehat{\chi}^F_{Gx,Gy})} G(FG(x) \boxt^\mathfrak{D} FG(y)) \xrightarrow{G(\epsilon_x \boxt \epsilon_y)} G(x \boxt^\mathfrak{D} y)
        \end{equation}
        where $\widehat{\chi}^F$ is the oplax monoidal product on $F$;

        \item A 1-morphism $\iota^G:\mathbf{I}^\mathfrak{C} \to G(\mathbf{I}^\mathfrak{D})$ is given by the following composition:
        \begin{equation}
            \mathbf{I}^\mathfrak{C} \xrightarrow{\eta_\mathbf{I}} GF(\mathbf{I}^\mathfrak{C}) \xrightarrow{G(\widehat{\iota}^F)} G(\mathbf{I}^\mathfrak{D})
        \end{equation}
        where $\widehat{\iota}^F$ is the oplax monoidal unit on $F$;

        \item For objects $x,y,z$ in $\mathfrak{D}$, the invertible modification $\omega^G_{x,y,z}$ is given by the following composition:
        \begin{equation}
            \stringdiagramfigure{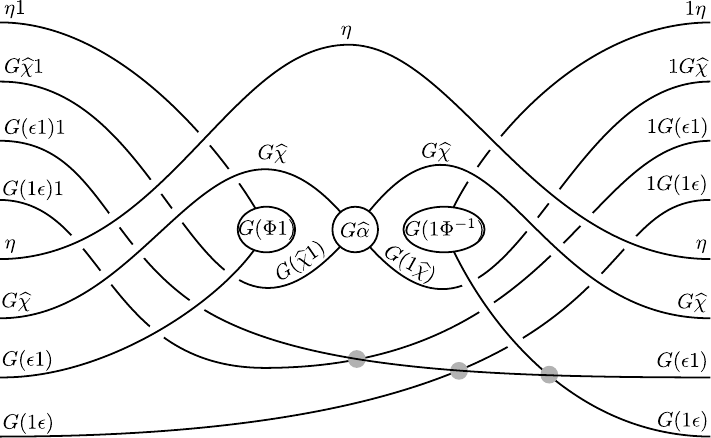}
        \end{equation}
        where $\widehat{\alpha}$ is the associator of the oplax monoidal structure on $F$;

        \item For object $x$ in $\mathfrak{D}$, the invertible modification $\lambda^G_x$ is given by the following composition:
        \begin{equation}
            \stringdiagramfigure{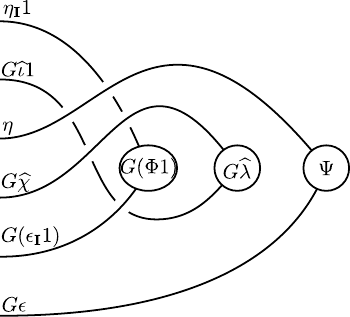}
        \end{equation}
        where $\widehat{\lambda}$ is the left unitor of the oplax monoidal structure on $F$;

        \item For object $x$ in $\mathfrak{D}$, the invertible modification $\rho^G_x$ is given by the following composition:
        \begin{equation}
            \stringdiagramfigure{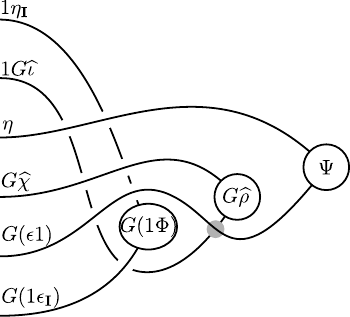}
        \end{equation}
        where $\widehat{\rho}$ is the right unitor of the oplax monoidal structure on $F$.
    \end{enumerate}

    Furthermore, there are compatible data for unit and counit with respect to the monoidal structures on $F$ and $G$:
    \begin{enumerate}
        \item There is an invertible modification with the component
        \begin{equation}\label{eqn:unit_monoidal_structure}
            \stringdiagramfigure{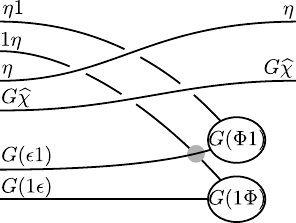}
        \end{equation}
        in $\Hom_\mathfrak{C}(x \, \Box^\mathfrak{C} \, y,G(F x \, \Box^\mathfrak{C} \, F y))$ for objects $x$ and $y$ in $\mathfrak{C}$;

        \item There is an invertible modification with the component
        \begin{equation}\label{eqn:counit_monoidal_structure}
            \stringdiagramfigure{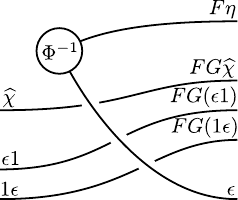}
        \end{equation}
        in $\Hom_\mathfrak{D}(F(G x \, \Box^\mathfrak{D} \, G y),x \, \Box^\mathfrak{D} \, y)$ for objects $x$ and $y$ in $\mathfrak{D}$.
    \end{enumerate}
\end{construction}

Conversely, if $G$ is a lax monoidal 2-functor, then there is a canonical oplax monoidal structure on its left adjoint $F$.

\begin{corollary}\label{cor:monoidal_2-adjunction}
    The left adjoint of a strongly monoidal 2-functor is oplax monoidal. Furthermore, the unit and counit of the 2-adjunction are equipped with canonical monoidal 2-natural transformation structures by inverting the lax monoidal 2-functor structure on the right adjoint in \eqref{eqn:unit_monoidal_structure} and \eqref{eqn:counit_monoidal_structure}. This 2-adjunction is internal to the 3-category of monoidal 2-categories, oplax monoidal 2-functors, monoidal 2-natural transformations, and monoidal modifications.

    Similarly, the right adjoint of a strongly monoidal 2-functor is lax monoidal. The unit and counit of the 2-adjunction are equipped with canonical monoidal 2-natural transformation structures by inverting the oplax monoidal 2-functor structure on the left adjoint in \eqref{eqn:unit_monoidal_structure} and \eqref{eqn:counit_monoidal_structure}. This 2-adjunction is internal to the 3-category of monoidal 2-categories, lax monoidal 2-functors, monoidal 2-natural transformations, and monoidal modifications.
\end{corollary}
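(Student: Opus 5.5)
The plan is to deduce the corollary from Construction \ref{cstr:lax_monoidal_structure_on_the_right_adjoint} and its dual. A strong monoidal 2-functor is in particular both lax and oplax: its inverse data $\widehat{\chi}=\chi^{-1}$, $\widehat{\iota}=\iota^{-1}$, with inverted modifications, give an oplax structure. So for the second statement, let $F$ be strong and $G$ its right adjoint, which is unique by Lemma \ref{lem:2-adjunction_is_universal}. Regard $F$ as oplax and apply Construction \ref{cstr:lax_monoidal_structure_on_the_right_adjoint} to obtain the lax structure $(\chi^G,\iota^G,\omega^G,\lambda^G,\rho^G)$ on $G$. The first statement is the formally dual one, obtained from the converse construction (lax $G$ strong, left adjoint $F$ oplax). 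Alternatively, one can apply the second statement to the 2-adjunction $G^{op}\dashv F^{op}$ between opposite 2-categories (reversing 1-morphisms), which swaps lax and oplax. I would do the second statement in detail and transport it.

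Next, the unit and counit must become monoidal 2-natural transformations, strong in the sense of Definition \ref{def:monoidal_2NatTrans}, between lax monoidal 2-functors. The unit $\eta\colon \mathrm{Id}\to GF$ needs an invertible $\Pi_{x,y}$ comparing $\chi^{GF}_{x,y}\circ(\eta_x\boxt\eta_y)$ with $\eta_{x\boxt y}$, where $\chi^{GF}=G(\chi^F)\circ\chi^G_{Fx,Fy}$. The component \eqref{eqn:unit_monoidal_structure} of the construction compares $\chi^G_{Fx,Fy}\circ(\eta_x\boxt\eta_y)$ with $G(\widehat{\chi}^F_{x,y})\circ\eta_{x\boxt y}$. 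Since $\widehat{\chi}^F=(\chi^F)^{-1}$, whiskering with $G(\chi^F_{x,y})$ and using the invertible 2-cell $G(\chi^F)\circ G((\chi^F)^{-1})\cong 1$ yields $\Pi_{x,y}$. This is the precise sense of "inverting the monoidal structure". The same procedure applied to \eqref{eqn:counit_monoidal_structure} gives the counit structure, and the unit 2-cells $M$ come from the triangulators together with $\iota\circ\iota^{-1}\cong1$.

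The remaining work is to verify the axioms \eqref{eqn:Monoidal2NatTransAssociativity}--\eqref{eqn:Monoidal2NatTransUnitality2} for $\eta$ and $\epsilon$, and to check that $\Phi,\Psi$ are monoidal modifications, i.e.\ satisfy \eqref{eqn:MonoidalModificationAssociativity} and \eqref{eqn:MonoidalModificationUnitality}. Once these hold, $(F,G,\eta,\epsilon,\Phi,\Psi)$ is a 2-adjunction in $\mathbf{M2Cat}^{lax}$, because the swallowtail equations \eqref{eqn:swallowtail1} and \eqref{eqn:swallowtail2} only involve underlying modifications and already hold. I expect this step to be the main obstacle. Expanding $\Pi$ in string diagrams, the associativity axiom reduces to $\omega^G$ as defined in the construction from $\widehat{\alpha}=\omega^{-1}$, together with naturality of $\eta$ and the interchangers. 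My first attempt is to unfold $\omega^G$, cancel each $\epsilon\circ F\eta$ pair against a triangulator $\Phi$, and use the pentagon \eqref{eqn:Monoidal2FunctorAssociativity} for $F$. The monoidal-modification condition for $\Phi,\Psi$ should then follow from the swallowtail identities, since $\Pi$ was itself built from $\Phi$ and $\Psi$.
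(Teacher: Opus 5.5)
Your proposal is correct and takes essentially the paper's route. The paper presents the corollary as a direct consequence of Construction~\ref{cstr:lax_monoidal_structure_on_the_right_adjoint} (and its converse, or equivalently its dual under reversing 1-morphisms), and obtains the strong structures on $\eta$ and $\epsilon$ exactly as you do: by composing \eqref{eqn:unit_monoidal_structure} and \eqref{eqn:counit_monoidal_structure} with $\chi^F$ and cancelling $\chi^F\circ\widehat{\chi}^F\cong 1$. The paper does not carry out the coherence checks you list (the axioms for $\Pi$ and $M$, and the monoidality of $\Phi,\Psi$), so your plan is if anything more explicit; one small remark is that \eqref{eqn:Monoidal2NatTransAssociativity} is a three-object identity, so you should not need the four-object pentagon \eqref{eqn:Monoidal2FunctorAssociativity} for $F$ there, only the relation between $\widehat{\alpha}$ and $\omega^F$, naturality, and $\Phi$.
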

\section{Left and Right Centers}

Let $\mathfrak{B}$ be a braided monoidal 2-category and let $B$ be an algebra in $\mathfrak{B}$.

\subsection{Commuting objects}
The 2-category $\Comm(B)$ of \emph{left commuting objects over $B$} is defined as follows.

\begin{definition}\label{def:commuting_objects}
A left commuting object over $B$ is a triple $(x,f,\psi^f)$, where
\begin{enumerate}[nosep, label=(\roman*)]
    \item $x$ is an object of $\mathfrak{B}$;
    
    \item $f:x\to B$ is a 1-morphism;
    
    \item $\psi^f$ is a 2-isomorphism: 
    \begin{equation}
        \psi^f \colon m \circ (f\boxt 1_B) \to m \circ (1_B\boxt f) \circ b_{x,B},
    \end{equation}
    subject to the following coherence conditions.
\end{enumerate}

The compatibility of $\psi^f$ with the associator of $B$ is the following equality in $\mathbf{Hom}_\mathfrak{B}(x \, \Box \, B \, \Box \, B,B)$:
\begin{equation}\label{eqn:commuting_object_associativity}
    \begin{array}{@{}c@{\quad}c@{\quad}c@{}}
        \stringdiagramfigure{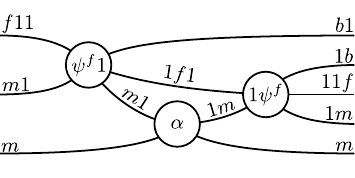}
        &
        =
        &
        \stringdiagramfigure{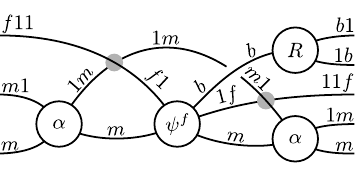} \quad .
    \end{array}
\end{equation}

The compatibility of $\psi^f$ with the unitors of $B$ is the following equality in $\mathbf{Hom}_\mathfrak{B}(x,B)$:

\begin{equation}\label{eqn:commuting_object_unitality}
    \begin{array}{@{}c@{\quad}c@{\quad}c@{}}
        \stringdiagramfigure{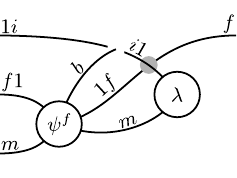}
        &
        =
        &
        \stringdiagramfigure{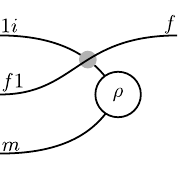} \quad .
    \end{array}
\end{equation}
\end{definition}

\begin{definition}\label{def:commuting_objects_morphisms}
A 1-morphism of commuting objects $(x,f,\psi^f)\to (y,g,\psi^g)$ consists of:
\begin{enumerate}[nosep, label=(\roman*)]
    \item A 1-morphism $h \colon x\to y$ in $\mathfrak{B}$;
    
    \item A 2-isomorphism $\chi^h \colon f \to g \circ h$ subject to the following equation in $\mathbf{Hom}_\mathfrak{B}(x \, \Box \, B,B)$:
\end{enumerate}

\begin{equation}\label{eqn:commuting_object_1morphism}
    \begin{array}{@{}c@{\quad}c@{\quad}c@{}}
        \stringdiagramfigure{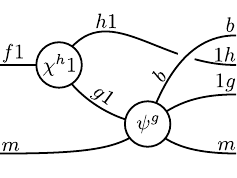}
        &
        =
        &
        \stringdiagramfigure{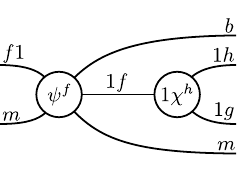} \quad .
    \end{array}
\end{equation}
    
A 2-morphism from $(h,\chi^h)$ to $(k,\chi^k)$, which are 1-morphisms from $(x,f,\psi^f)$ to $(y,g,\psi^g)$, consists of a 2-morphism $\xi:h \to k$ in $\mathfrak{B}$, subject to the following equation in $\mathbf{Hom}_\mathfrak{B}(x,B)$:
\begin{equation}\label{eqn:commuting_object_2morphism}
    \begin{array}{@{}c@{\quad}c@{\quad}c@{}}
        \stringdiagramfigure{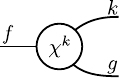}
        &
        =
        &
        \stringdiagramfigure{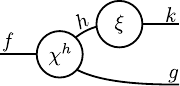} \quad .
    \end{array}
\end{equation}
\end{definition}

\begin{definition}\label{def:left_center}
The \emph{left center} of $B$ is the terminal object of $\Comm(B)$, when it exists. We denote it by $\Zl(B)$, and write
\[
    \zeta:\Zl(B)\to B
\]
for the canonical 1-morphism.
\end{definition}

\begin{definition}\label{def:right_center}
The \emph{right center} $\Zr(B)$ is defined by the same universal property, but with the reverse braiding used in Definitions \ref{def:commuting_objects} and \ref{def:commuting_objects_morphisms}.
\end{definition}

\subsection{Braided algebra structure}
Let us construct the braided algebra structure on the left center $\Zl(B)$, assuming it exists. The construction for $\Zr(B)$ is analogous.

\begin{proposition}\label{prop:left_center_braided}
The left center $\mathrm{Z}^l_1(B)$ is a braided algebra in $\mathfrak{B}$. Moreover, the canonical 1-morphism $\zeta: \mathrm{Z}^l_1(B) \to B$ is an algebra 1-morphism in $\mathfrak{B}$.
\end{proposition}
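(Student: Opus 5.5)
The plan is to obtain every piece of structure on $Z:=\Zl(B)$ from the universal property of Definition \ref{def:left_center}. Terminality in $\Comm(B)$ means that for every commuting object $(x,f,\psi^f)$ the category of 1-morphisms $(x,f,\psi^f)\to(Z,\zeta,\psi^\zeta)$ is contractible. So producing a 1-morphism into $Z$ amounts to exhibiting a commuting structure on a 1-morphism $x\to B$, and every equation between 2-morphisms into $Z$ holds once both sides are checked to be 2-morphisms of commuting objects.

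\textbf{Algebra structure.} First I equip $Z\boxt Z$ with a commuting structure over $B$. The underlying 1-morphism is $m\circ(\zeta\boxt\zeta)$. The 2-isomorphism $m\circ(m\circ(\zeta\boxt\zeta)\boxt 1_B)\to m\circ(1_B\boxt m\circ(\zeta\boxt\zeta))\circ b_{Z\boxt Z,B}$ is built by pasting four pieces: the associator $\alpha$ to reach $m\circ(\zeta\boxt m\circ(\zeta\boxt 1))$, then $\psi^\zeta$ on the second factor, then $\alpha^{-1}$ and $\psi^\zeta$ on the first factor, and finally $S_{Z,Z,B}$ to identify $(Z\boxt b_{Z,B})\circ(b_{Z,B}\boxt Z)$ with $b_{Z\boxt Z,B}$. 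Interchangers and the naturality of $b$ are used throughout. The commuting-object axioms (\ref{eqn:commuting_object_associativity}) and (\ref{eqn:commuting_object_unitality}) for this structure should follow from the pentagon of $B$, the corresponding axioms for $\psi^\zeta$, and coherence (a) together with the naturality of $S$. Terminality then gives $m^Z:Z\boxt Z\to Z$ with $\chi:m\circ(\zeta\boxt\zeta)\to\zeta\circ m^Z$, which is exactly $\psi^\zeta$-data making $\zeta$ multiplicative. For the unit, $\I$ carries the commuting structure $(i,\lambda^{-1}\rho)$, using coherence (e) to identify $b_{\I,B}$ with the identity. This gives $i^Z$ with $\eta^\zeta:i\to\zeta\circ i^Z$.

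\textbf{Coherence data and the braiding.} The associator $\alpha^Z$ comes from noting that both $m^Z(m^Z\boxt1)$ and $m^Z(1\boxt m^Z)$ underlie 1-morphisms of commuting objects from the threefold product structure on $Z^{\boxt3}$ to $Z$. The two comparisons $\zeta m^Z(m^Z\boxt1)\cong m(m\boxt1)(\zeta^{\boxt3})\xrightarrow{\alpha}m(1\boxt m)\zeta^{\boxt3}\cong\zeta m^Z(1\boxt m^Z)$ therefore lift uniquely to a 2-isomorphism $\alpha^Z$. The unitors $\lambda^Z,\rho^Z$ are obtained the same way. The pentagon and triangle for $Z$ follow because both sides are 2-morphisms of commuting objects between the same pair of 1-morphisms into the terminal object, and such 2-morphisms are unique. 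These same defining properties are precisely the axioms (\ref{eqn:Algebra1MorphismCoh1})--(\ref{eqn:Algebra1MorphismCoh3}) saying that $\zeta$ is an algebra 1-morphism. For the braiding $\beta:m^Z\circ b_{Z,Z}\to m^Z$, I use that $m^Z\circ b_{Z,Z}$ is also a morphism of commuting objects into $Z$ from $Z\boxt Z$, with its structure transported along $b$ using $R$, $S$ and naturality. The required 2-isomorphism $\zeta m^Z b\cong m(\zeta\boxt\zeta)b\cong m(1\boxt\zeta)b_{Z,B}(\zeta\boxt1)\xrightarrow{(\psi^\zeta)^{-1}}m(\zeta\boxt\zeta)\cong\zeta m^Z$ is exactly where the commuting structure $\psi^\zeta$ enters. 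Lifting it through terminality gives $\beta$, and the hexagons (\ref{eqn:BraidedAlgebra1}) and (\ref{eqn:BraidedAlgebra2}) again follow from uniqueness of 2-morphisms.

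\textbf{Main obstacle.} The one substantial step is verifying that each proposed 2-morphism is compatible with the commuting structures, i.e.\ satisfies (\ref{eqn:commuting_object_2morphism}), and that the transported structures satisfy (\ref{eqn:commuting_object_1morphism}). For the braiding $\beta$ this needs coherence (a)--(d) for $R,S$ and the associativity axiom for $\psi^\zeta$ in a sizable string-diagram computation. I would first check that the two routes in $\mathbf{Hom}(Z\boxt Z\boxt B,B)$ agree by reducing both sides to a normal form via (\ref{eqn:commuting_object_associativity}). After that, all the remaining equations are formal consequences of terminality.
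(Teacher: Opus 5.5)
Your proposal is correct and follows the paper's proof. All of the structure (unit, multiplication, associator, unitors, braiding) is induced from the terminal property of $\Zl(B)$ in $\Comm(B)$, and the algebra 1-morphism structure on $\zeta$ is the pair of 2-isomorphisms $\chi$ produced for $m^Z$ and $i^Z$; your uniqueness-of-2-morphisms argument for the pentagon, triangle and hexagons spells out what the paper leaves implicit. The remaining issues are cosmetic: the commuting structure on $m\circ(\zeta\boxt\zeta)$ needs a final associator to land in $m\circ(1_B\boxt m)\circ(1_B\boxt\zeta\boxt\zeta)\circ b_{Z\boxt Z,B}$; the unit structure is $\rho^{-1}\circ\lambda$, not its inverse; the last factor in your braiding chain should be $1_Z\boxt\zeta$ rather than $\zeta\boxt 1$; and the associativity check for the product structure uses the mixed $R$--$S$ axiom (c) rather than (a).
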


\begin{proof}
The braided algebra structure on $\mathrm{Z}^l_1(B)$ is given by the following:
    \begin{itemize}
        \item The unit is induced by considering $i:\mathbf{I} \to B$ as a commuting object over $B$ with the 2-isomorphism 
        \[\begin{tikzcd}
            {\mathbf{I} \, \Box \, B}
                \arrow[dd,"b_{\mathbf{I},B}"']
                \arrow[r,"i 1" {name=U}]
            & {B \, \Box \, B}
                \arrow[rd,"m"]
                \arrow[d,Rightarrow,shorten <=5pt,shorten >=5pt,"{\lambda}"]
            & {}
            \\ {}
            & {B}
                \arrow[r,equal]
                \arrow[lu,equal]
                \arrow[ld,equal]
                \arrow[d,Rightarrow,shorten <=5pt,shorten >=5pt,"{\rho^{-1}}"]
            & {B}
            \\{B \, \Box \, \mathbf{I}}
                \arrow[r,"1 i"' {name=D}]
            & {B \, \Box \, B}
                \arrow[ur,"m"']
            & {}
        \end{tikzcd}. \] 
        
        \noindent Notice that $b_{\mathbf{I},B}$ is assumed to be the identity of $B$. By the universal property of the left center, we obtain a 1-morphism in $\mathbf{Comm}(B)$: \[\widetilde{i}:\mathbf{I} \to \mathrm{Z}^l_1(B)\] with a 2-isomorphism 
        \[\begin{tikzcd}
            {\mathbf{I}}
                \arrow[rr,"i"]
                \arrow[dr,"\widetilde{i}"']
            & {}
            & {B}
            \\ {}
            & {\mathrm{Z}^l_1(B)}
                \arrow[ur,"\zeta"']
                \arrow[u,Rightarrow,shorten <=5pt,shorten >=2pt,"{\chi^{\widetilde{i}}}"]
            & {}
        \end{tikzcd}. \]
        
        \item The multiplication is induced by \[ \mathrm{Z}^l_1(B) \, \Box \,  \mathrm{Z}^l_1(B) \xrightarrow{\zeta \, \Box \, \zeta} B \, \Box \, B \xrightarrow{m} B\] as a commuting object over $B$, with the 2-isomorphism
        \begin{center}
            \stringdiagramfigure{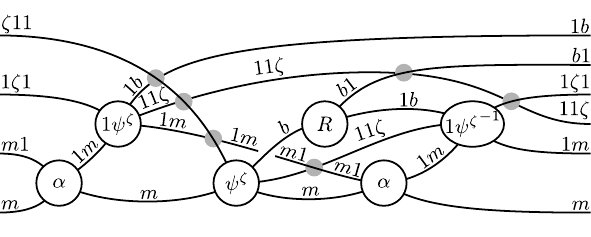} \quad .
        \end{center}

        \noindent By the universal property of the left center, we obtain a 1-morphism in $\mathbf{Comm}(B)$: \[\widetilde{m}:\mathrm{Z}^l_1(B) \, \Box \,  \mathrm{Z}^l_1(B) \to \mathrm{Z}^l_1(B)\] with a 2-isomorphism 
        \[\begin{tikzcd}
            {\mathrm{Z}^l_1(B) \, \Box \,  \mathrm{Z}^l_1(B)}
                \arrow[r,"\zeta \, \Box \, \zeta" ]
                \arrow[d,"\widetilde{m}"']
            & {B \, \Box \, B}
                \arrow[d,"m"]
            \\ {\mathrm{Z}^l_1(B)}
                \arrow[r,"\zeta"']
                \arrow[ur,Rightarrow,shorten <=15pt,shorten >=15pt,"{\chi^{\widetilde{m}}}"]
            & {B}
        \end{tikzcd}.\]
        
        \item The associator is induced by the universal property of the left center, considering the following 2-morphism in $\mathbf{Comm}(B)$: 
        \begin{center}
            \stringdiagramfigure{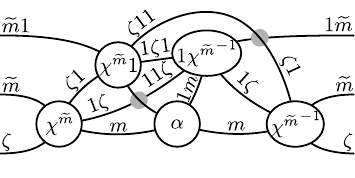} \quad .
        \end{center}
        
        \item The left unitor is induced by the universal property of the left center, considering the following 2-morphism in $\mathbf{Comm}(B)$: 
        \begin{center}
            \stringdiagramfigure{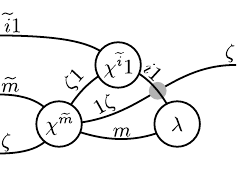} \quad .
        \end{center}
        
        \item The right unitor is induced by the universal property of the left center, considering the following 2-morphism in $\mathbf{Comm}(B)$: 
        \begin{center}
            \stringdiagramfigure{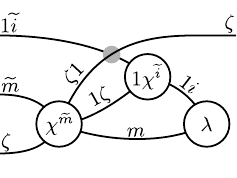} \quad .
        \end{center}
        
        \item The braiding is induced by the universal property of the left center, considering the following 2-morphism in $\mathbf{Comm}(B)$: 
        \begin{center}
            \stringdiagramfigure{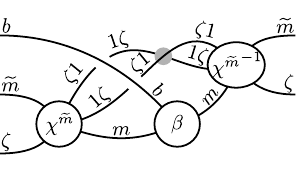} \quad .
        \end{center}
    \end{itemize}

    \noindent The algebra 1-morphism structure on $\zeta$ is given by $\chi^{\widetilde{m}}$ and $\chi^{\widetilde{i}}$ we constructed above.
\end{proof}

\begin{corollary}
If $\Zr(B)$ exists, then $\Zr(B)$ is naturally a braided algebra in $\mathfrak{B}$, and the canonical morphism $\Zr(B)\to B$ is an algebra 1-morphism.
\end{corollary}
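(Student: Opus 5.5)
The cleanest route is to deduce the corollary from Proposition \ref{prop:left_center_braided} by braided reversal (Notation \ref{not:braided_opposite}), rather than repeating the construction. By Definition \ref{def:right_center}, $\Zr(B)$ is the terminal object of the 2-category of commuting objects built from the reverse braiding. So the first step is to compare $\mathfrak{B}^{2mp}$ with $\mathfrak{B}$.

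In the semistrict model, $\mathfrak{B}^{2mp}$ has the same underlying Gray monoid $\mathfrak{B}$. Its braiding is $b^{2mp}_{x,y} := b_{y,x}^{-1}$, where we choose an adjoint inverse of the 2-natural equivalence $b$. The modifications $R^{2mp}$ and $S^{2mp}$ are obtained from $S$ and $R$ by mating and inverting.

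The algebra $B$ is unchanged, since the notion of an algebra does not involve the braiding. We then check that, with this braiding, Definitions \ref{def:commuting_objects} and \ref{def:commuting_objects_morphisms} in $\mathfrak{B}^{2mp}$ give exactly the 2-category of right-commuting objects in $\mathfrak{B}$, that is, the one appearing in Definition \ref{def:right_center}. It follows that $\Zr(B)$, computed in $\mathfrak{B}$, is the left center of $B$ computed in $\mathfrak{B}^{2mp}$, together with the same canonical 1-morphism to $B$.

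The second step applies Proposition \ref{prop:left_center_braided} in $\mathfrak{B}^{2mp}$, which is again a semistrict braided monoidal 2-category (Theorem \ref{thm:gurski_braided_coherence}). This makes $\Zr(B)$ a braided algebra in $\mathfrak{B}^{2mp}$, with braiding $\beta^{2mp}\colon m\circ b^{-1}_{Z,Z}\to m$, and makes $\Zr(B)\to B$ an algebra 1-morphism. The latter statement involves only the monoidal structure, so it transfers to $\mathfrak{B}$ directly.

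The last step converts a braided algebra in $\mathfrak{B}^{2mp}$ into a braided algebra in $\mathfrak{B}$. We whisker $\beta^{2mp}$ by $b_{Z,Z}$ and use the unit/counit of the adjoint equivalence $b\dashv b^{-1}$. This produces a 2-isomorphism $m\to m\circ b_{Z,Z}$, and its inverse serves as $\beta$.

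The main obstacle is verifying that $\beta$ satisfies the hexagons \eqref{eqn:BraidedAlgebra1} and \eqref{eqn:BraidedAlgebra2} for $b$. Under the reversal, the two hexagons for $b^{2mp}$ correspond to the two hexagons for $b$, with the roles of $R$ and $S$ exchanged. Establishing this correspondence requires the triangle (swallowtail) identities of the adjoint equivalence.

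If this bookkeeping becomes unwieldy, the fallback is to bypass the reversal and rerun the proof of Proposition \ref{prop:left_center_braided} verbatim with $b$ replaced by $b^{-1}$. In that version the braiding 2-morphism in $\Comm$ is built from $\psi$ composed with the inverse braiding, and each coherence check is the mirror image of the corresponding check in the left case.
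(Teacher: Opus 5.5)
Your argument is correct, but it is organized differently from the paper. The paper gives no separate proof. It only remarks that the construction for $\Zr(B)$ is analogous to Proposition \ref{prop:left_center_braided}, i.e.\ one reruns the universal-property constructions (unit, multiplication, associator, unitors, braiding) with the reverse braiding $(b^\bullet,S^\bullet,R^\bullet)$ in the commuting-object data. That is your fallback.

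Your main route is more formal. The paper's reverse braiding is exactly the braiding of $\mathfrak{B}^{2mp}$, so $\Zr(B)$ in $\mathfrak{B}$ is literally $\Zl(B)$ in $\mathfrak{B}^{2mp}$. You apply the proposition there, and then transport the result along a general lemma identifying braided algebras in $\mathfrak{B}^{2mp}$ with braided algebras in $\mathfrak{B}$: invert and mate $\beta$, with the two hexagons swapping because $R^{2mp}$ is built from $S$ and vice versa.

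What your route buys: you redo no construction, and the single genuinely new check is isolated in a reusable statement. It applies verbatim to the Morita invariance of $\Zr$ in Corollary \ref{cor:left_center_is_Morita_invariant} and to the remark $\mathbf{Z}_1(A)\simeq\Zr(\mathbf{U}^R(A))$. It also makes explicit a point the paper's ``analogous'' glosses over. A literal mirror of the left-center proof produces a braiding 2-cell against $b^\bullet_{Z,Z}$, i.e.\ a braided algebra in $\mathfrak{B}^{2mp}$. So some version of your third step, using $\psi^{-1}$ and the unit/counit of $b\dashv b^\bullet$, is needed in either approach to land in $\mathfrak{B}$ as the statement asserts.

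The cost is that you must realize $\mathfrak{B}^{2mp}$ in the same semistrict form, choosing adjoint inverses to be identities wherever $b$ is, so that conditions (e) and (f) of Notation \ref{not:braiding_calculus} hold. Otherwise you must separately check that left centers and braided algebras are invariant under the braided equivalence supplied by Theorem \ref{thm:gurski_braided_coherence}.
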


\subsection{Morita invariance} Two algebras $A$ and $B$ in the braided monoidal 2-category $\mathfrak{B}$ is \emph{Morita equivalent} if there exists an $(A,B)$-bimodule $M$ and a $(B,A)$-bimodule $N$ such that $M \boxt_B \, N \simeq A$ as $(A,A)$-bimodules and $N \, \boxt_A \, M \simeq B$ as $(B,B)$-bimodules. Equivalently, $A$ and $B$ are Morita equivalent if their right module 2-categories $\Mod_\mathfrak{B}(A)$ and $\Mod_\mathfrak{B}(B)$ are equivalent as left $\mathfrak{B}$-module 2-categories.

\begin{construction}\label{cstr:opposite_algebra}
    For any algebra $(A,m,i,\alpha,\lambda,\rho)$ in $\mathfrak{B}$, we can construct its \emph{opposite algebra}:
    \begin{itemize}[nosep]
        \item The underlying object is $A$;
        
        \item The multiplication $m^\mathrm{op}$ is given by 
        \begin{equation}
            A \boxt A \xrightarrow{b_{A,A}} A \boxt A \xrightarrow{m} A;
        \end{equation}

        \item The unit is the same $i \colon \mathbf{I} \to A$;
        
        \item The associator $\alpha^\mathrm{op}$ is given by the following 2-isomorphism:
        \begin{equation}
            \stringdiagramfigure{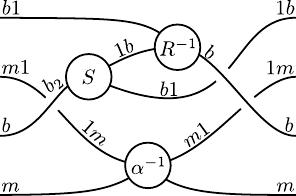} \quad ;
        \end{equation}

        \item The left unitor $\lambda^\mathrm{op}$ is given by the following 2-isomorphism:
        \begin{equation}
            \stringdiagramfigure{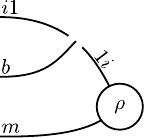} \quad ;
        \end{equation} 

        \item The right unitor $\rho^\mathrm{op}$ is given by the following 2-isomorphism:
        \begin{equation}
            \stringdiagramfigure{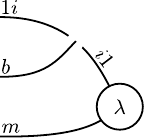} \quad .
        \end{equation}
    \end{itemize}

    We denote the opposite algebra of $A$ by $A^\mathrm{op}$. It is straightforward to check that $A^\mathrm{op}$ is indeed an algebra in $\mathfrak{B}$.
\end{construction}

\begin{construction}\label{cstr:algebra_on_product}
    Suppose $A$ and $B$ are two algebras in the braided monoidal 2-category $\mathfrak{B}$. The monoidal product $A \boxt B$ carries an algebra structure in $\mathfrak{B}$, given as follows:
    \begin{itemize}[nosep]
        \item The multiplication $m^{A\boxt B}$ is given by 
        \begin{equation}
            A \boxt B \boxt A \boxt B \xrightarrow{1_A \boxt b_{B,A} \boxt 1_B} A \boxt A \boxt B \boxt B \xrightarrow{m^A\boxt m^B} A \boxt B ;
        \end{equation}
        
        \item The unit $i^{A\boxt B}$ is given by $\mathbf{I} = \mathbf{I} \boxt \mathbf{I} \xrightarrow{i^A \boxt i^B} A \boxt B$;

        \item The associator $\alpha^{A\boxt B}$ is given by the following 2-isomorphism:
        \begin{equation}
            \stringdiagramfigure{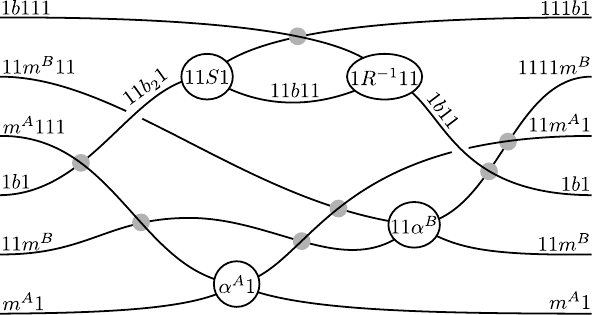} \quad ;
        \end{equation}

        \item The left unitor $\lambda^{A\boxt B}$ is given by the following 2-isomorphism:
        \begin{equation}
            \stringdiagramfigure{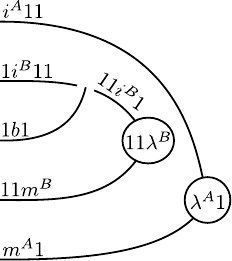} \quad ;
        \end{equation}

        \item The right unitor $\rho^{A\boxt B}$ is given by the following 2-isomorphism:
        \begin{equation}
            \stringdiagramfigure{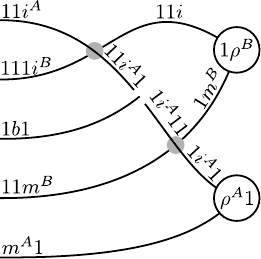} \quad .
        \end{equation}
    \end{itemize}
\end{construction}

\begin{construction}\label{cstr:BimoduleAsRightModuleOverProductAlgebra}
    Let $A$ be an algebra in the braided monoidal 2-category $\mathfrak{B}$. There exist non-canonical equivalences between $\Bimod_\mathfrak{B}(A)$ and $\Mod_\mathfrak{B}(A^\mathrm{op} \boxt A)$. We fix one such equivalence constructed as follows. 
    
    Given any $(A,A)$-bimodule $(M,l^M,r^M,\mu^M,\nu^M,\lambda^M,\rho^M,\alpha^M)$ in $\mathfrak{B}$, we endow the same underlying object $M$ with a right $A^\mathrm{op} \boxt A$-action via
    \begin{equation}
        M\boxt A\boxt A \xrightarrow{b_{M,A}\boxt 1_{A}} A\boxt M\boxt A \xrightarrow{l^M\boxt 1_{A}} M\boxt A \xrightarrow{r^M} M,
    \end{equation}
    together with the associator and right unitor:
    \begin{equation}
        \widetilde{\nu}^{M} := \quad \stringdiagramfigure{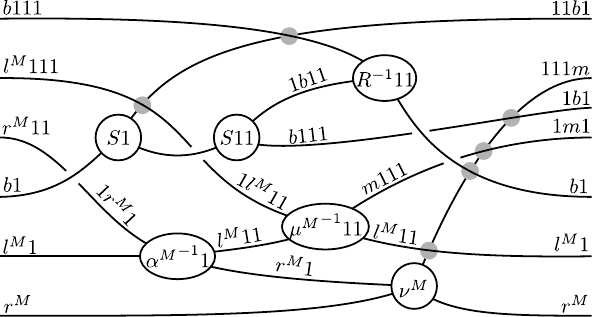} \quad ,
    \end{equation}
    \begin{equation}
        \widetilde{\rho}^{M} := \quad \stringdiagramfigure{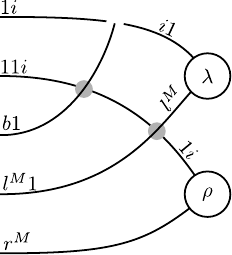} \quad .
    \end{equation}
\end{construction}

\begin{theorem}\label{thm:left_center_as_internal_hom}
    The left center $\Zl(A)$ is equivalent to the internal endo-Hom of the unit $A$ in the 2-category $\Mod_\mathfrak{B}(A^\mathrm{op} \boxt A)$.
\end{theorem}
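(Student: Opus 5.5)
The plan is to compare universal properties. $\Mod_\mathfrak{B}(A^\mathrm{op}\boxt A)$ is a left $\mathfrak{B}$-module 2-category: $x\boxt M$ acts on the left factor. The internal endo-Hom $\underline{\mathrm{End}}(A)$ of the unit bimodule $A$ is characterized as the object representing the 2-functor
\[
x\longmapsto \Hom_{\Mod_\mathfrak{B}(A^\mathrm{op}\boxt A)}(x\boxt A,\,A).
\]
It comes with an evaluation module 1-morphism $\underline{\mathrm{End}}(A)\boxt A\to A$. So it suffices to construct, for each object $x$ of $\mathfrak{B}$, an equivalence of categories
\[
\Hom_{\Comm(A)}\bigl((x,?,?),\ldots\bigr)\ \text{-style data}\;\simeq\;\Hom_{\Mod_\mathfrak{B}(A^\mathrm{op}\boxt A)}(x\boxt A,A).
\]
More precisely, we identify the groupoid-enriched fibre of $\Comm(A)$ over $x$ (commuting structures $(f,\psi^f)$ with source $x$) with the category of right $A^\mathrm{op}\boxt A$-module 1-morphisms $x\boxt A\to A$, naturally in $x$. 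Since $\Zl(A)$ is terminal in $\Comm(A)$, the 2-functor $x\mapsto\{\text{commuting structures on }x\}$ is represented by $\Zl(A)$ via pullback along $\zeta$, and the equivalence of the two representing objects follows by the 2-categorical Yoneda lemma.

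\textbf{Step 1: from a commuting object to a module map.} Given $(x,f,\psi^f)$, take the 1-morphism
\[
F=m\circ(f\boxt 1_A)\colon x\boxt A\to A.
\]
Under Construction \ref{cstr:BimoduleAsRightModuleOverProductAlgebra}, $A$ and $x\boxt A$ carry right $A^\mathrm{op}\boxt A$-actions; the first uses $b_{A,A}$ and $m$ on both sides, and the second is the free action on the $A$ factor. Compatibility of $F$ with the right $A$-action comes from the associator $\alpha$. Compatibility with the $A^\mathrm{op}$-part requires a 2-isomorphism of the form
\[
m\circ(m\boxt 1)\circ(f\boxt b_{A,A}\ldots)\;\Rightarrow\; m\circ(1\boxt m)\circ(b\ldots),
\]
which is built from $\psi^f$, $\alpha$, and the braiding modifications $R,S$. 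The module-map coherences then reduce to \eqref{eqn:commuting_object_associativity} and \eqref{eqn:commuting_object_unitality} together with axioms (a)–(f) of Notation \ref{not:braiding_calculus}.

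\textbf{Step 2: the inverse construction.} Given a module 1-morphism $(F,\psi^F)\colon x\boxt A\to A$, set $f=F\circ(1_x\boxt i)$. Obtain $\psi^f$ by restricting the two halves of the module structure along the units: the $A$-part gives $F\cong m\circ(f\boxt 1)$ via $\rho$/$\lambda$, and the $A^\mathrm{op}$-part gives $F\cong m\circ(1\boxt f)\circ b_{x,A}$. Composing these yields $\psi^f$. We then check that both round trips are naturally isomorphic to the identity, via the unitors and the module-map unitality equation, and that 1- and 2-morphisms match. Equation \eqref{eqn:commuting_object_1morphism} should correspond to module 2-morphisms $F\Rightarrow G\circ(h\boxt 1)$, and \eqref{eqn:commuting_object_2morphism} should correspond to equality of such 2-morphisms. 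All of this is natural in $x$ because each construction is just precomposition.

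\textbf{Expected obstacle.} The main difficulty is Step 1's coherence check: showing that the associativity equation for the $A^\mathrm{op}\boxt A$-module 1-morphism, which lives over $x\boxt A\boxt A\boxt A\boxt A$ and involves $\widetilde\nu$, follows from the single equation \eqref{eqn:commuting_object_associativity}. To handle it, I would first split the module-map axiom into three pieces: a pure $A$-part, a pure $A^\mathrm{op}$-part, and a mixed part. This is possible because $\widetilde\nu$ is assembled from $\nu,\mu,\alpha^M$ and $R,S$. Each piece is then verified separately by string-diagram manipulation, using naturality of $b$ and the interchanger $\pmb\phi$.
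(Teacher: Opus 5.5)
Your proposal is correct and takes essentially the same route as the paper. It uses the same two constructions: $F=m\circ(f\boxt 1_A)$ in one direction, and in the other $f=F\circ(1_x\boxt i)$, with $\psi^f$ assembled from the two unit-restricted halves $\psi^l,\psi^r$ of the $A^\mathrm{op}\boxt A$-module structure; these are shown to be mutually inverse and compatible with 1- and 2-morphisms, and your Yoneda step makes explicit what the paper leaves implicit. One small precision: 1-morphisms of commuting objects carry an invertible $\chi^h$, so the fibre of $\Comm(A)$ over $x$ should be matched with the core of $\Hom_{\Mod_\mathfrak{B}(A^\mathrm{op}\boxt A)}(x\boxt A,A)$ (invertible module 2-morphisms only); this still suffices to identify the two representing objects via Yoneda for the underlying $(2,1)$-category of $\mathfrak{B}$.
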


\begin{proof}
    For a left commuting object $(x,f,\psi^f)$ over $A$, consider the 1-morphism $x \boxt A \xrightarrow{f \boxt 1_A} A \boxt A \xrightarrow{m} A$ with the right $A^\mathrm{op} \boxt A$-module 1-morphism structure given by
    \begin{equation}
        \begin{array}{@{}c@{\quad}c@{}}
        \stringdiagramfigure{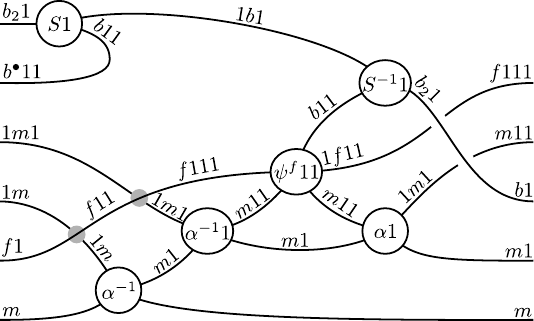} & .
        \end{array}
    \end{equation}

    Conversely, for a right $A^\mathrm{op} \boxt A$-module 1-morphism $g \colon x \boxt A \to A$, we can construct a 1-morphism $x = x \boxt \mathbf{I} \xrightarrow{1_x \boxt i} x \boxt A \xrightarrow{g} A$ with a 2-isomorphism $\psi^g$
    \begin{equation}
        \begin{array}{@{}c@{\quad}c@{}}
        \stringdiagramfigure{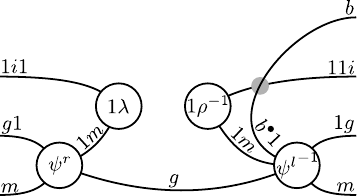} & ,
        \end{array}
    \end{equation}
    promoting this 1-morphism to a left commuting object over $A$, where $\psi^l$ is the left $A$-module 1-morphism structure of $g$, and $\psi^r$ is the right $A$-module 1-morphism structure of $g$, defined as
    \begin{equation}
        \stringdiagramfigure{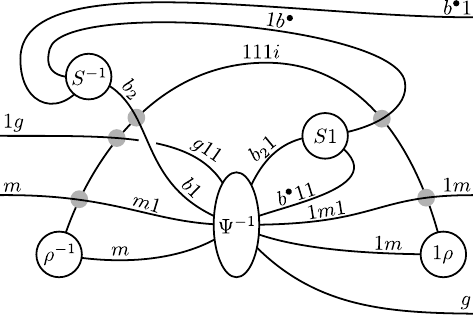}
    \end{equation}
    and 
    \begin{equation}
        \stringdiagramfigure{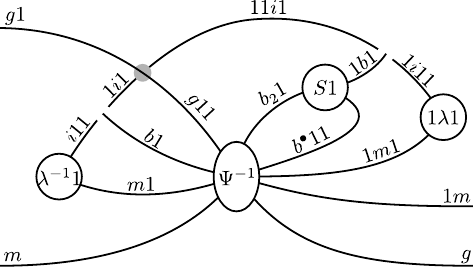}
    \end{equation}
    respectively. Here $\Psi$ is the right $A^\mathrm{op} \boxt A$-module 1-morphism structure of $g$. It is straightforward to check that these two constructions are mutually inverse, and that they preserve the 1-morphism and 2-morphism structures. This produces an equivalence between $\Zl(A)$ and the internal endo-Hom of the unit $A$ in the 2-category $\Mod_\mathfrak{B}(A^\mathrm{op} \boxt A)$.
\end{proof}

\begin{remark}
    If we consider an alternative equivalence in Construction \ref{cstr:BimoduleAsRightModuleOverProductAlgebra} by replacing the braiding $b_{A,M}$ by the reversed braiding $b^\bullet_{M,A}$, then we can similarly show that the right center $\Zr(A)$ is equivalent to the internal endo-Hom of the unit $A$ in the 2-category $\Mod_\mathfrak{B}(A^\mathrm{op} \boxt A)$.
\end{remark}

\begin{corollary}\label{cor:left_center_is_Morita_invariant}
    The left center $\Zl(A)$ is Morita invariant. That is, if $A$ and $B$ are Morita equivalent algebras in $\mathfrak{B}$, then $\Zl(A)$ and $\Zl(B)$ are equivalent as braided algebras in $\mathfrak{B}$. Similarly, the right center $\Zr(A)$ is Morita invariant.
\end{corollary}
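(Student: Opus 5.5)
The plan is to deduce the corollary from Theorem \ref{thm:left_center_as_internal_hom} by showing that both sides of that identification are Morita invariant. First, a Morita equivalence ${}_AM_B$, ${}_BN_A$ induces an equivalence of bimodule 2-categories
\[
\Bimod_\mathfrak{B}(A) \simeq \Bimod_\mathfrak{B}(B), \qquad P \mapsto N\boxtsub{A}P\boxtsub{A}M .
\]
This equivalence is monoidal for the relative tensor product, and it sends the unit $A$ to $N\boxtsub{A}M\simeq B$; these facts follow from the associativity and unitality isomorphisms of Proposition \ref{prop:associativity_and_unitality_of_relative_tensor_products}. It is also compatible with the left $\mathfrak{B}$-action on bimodules. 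Indeed, in a braided $\mathfrak{B}$ an object $x$ acts on a bimodule by $x\boxt P$, with left $A$-action routed through the braiding $b$, and this action commutes with $N\boxtsub{A}(-)\boxtsub{A}M$ up to coherent isomorphism. Transporting along the fixed equivalence of Construction \ref{cstr:BimoduleAsRightModuleOverProductAlgebra}, we obtain an equivalence of left $\mathfrak{B}$-module 2-categories
\[
\Mod_\mathfrak{B}(A^\mathrm{op}\boxt A)\simeq \Mod_\mathfrak{B}(B^\mathrm{op}\boxt B)
\]
sending $A$ to $B$.

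Second, internal endo-Homs are invariant under $\mathfrak{B}$-module equivalences. The internal Hom $[P,P]$ represents the 2-functor $x\mapsto \Hom(x\boxt P,P)$, and a $\mathfrak{B}$-module equivalence identifies these representable 2-functors for $A$ and $B$. Combining this with Theorem \ref{thm:left_center_as_internal_hom} gives an equivalence $\Zl(A)\simeq \Zl(B)$ in $\mathfrak{B}$. Concretely, the composite identifies $\Comm(A)$ with $\Comm(B)$ as 2-categories fibered over $\mathfrak{B}$, compatibly with pulling back along 1-morphisms $x'\to x$, and so it preserves terminal objects.

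Third, we must upgrade this to an equivalence of \emph{braided algebras}. The cleanest route is to observe that the braided algebra structure of Proposition \ref{prop:left_center_braided} is built entirely from the universal property. The unit comes from $i$ regarded as a commuting object. The multiplication comes from the commuting object $m\circ(\zeta\boxt\zeta)$, and the associator, unitors and $\beta$ come from 2-morphisms in $\Comm$. So we need the identification $\Comm(A)\simeq\Comm(B)$ to be compatible with the operation that combines two commuting objects $(x,f)$ and $(y,g)$ into $(x\boxt y, m\circ(f\boxt g))$, and with the unit commuting object.

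On the internal-Hom side this operation corresponds to composition $[A,A]\boxt[A,A]\to[A,A]$. Under the bimodule picture, it is composition of bimodule endomorphisms of $A$ after tensoring with $x$ and $y$. Composition is manifestly preserved by the functor $N\boxtsub{A}(-)\boxtsub{A}M$. The braiding $\beta$ corresponds to the interchange of the two $x$- and $y$-actions via $b$, and it is preserved because the module equivalence is compatible with the braided action. By Lemma \ref{lem:2-adjunction_is_universal}-style uniqueness of universally induced data, the induced 1-morphism $\Zl(A)\to\Zl(B)$ then acquires canonical $\psi,\eta$ structures. These satisfy \eqref{eqn:BraidedAlgebra1Morphism}, because each side is determined by its image under $\zeta$.

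The main obstacle is this last step: checking that the bimodule equivalence respects the monoidal/braided structure on commuting objects, i.e.\ that the tensoring of commuting objects is carried to tensoring. I would first attempt it by working directly with Construction \ref{cstr:BimoduleAsRightModuleOverProductAlgebra}, writing the relevant 2-cells as string diagrams and using the hexagon-type modifications $R,S$. The statement for $\Zr$ follows by applying the same argument in $\mathfrak{B}^{2mp}$, where the reversed braiding turns right centers into left centers.
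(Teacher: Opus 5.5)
Your proposal is correct and follows the route the paper intends. The paper states the corollary with no separate proof, as an immediate consequence of Theorem \ref{thm:left_center_as_internal_hom}: the internal endo-Hom of the unit is transported along the equivalence $\Mod_\mathfrak{B}(A^{\mathrm{op}}\boxt A)\simeq\Mod_\mathfrak{B}(B^{\mathrm{op}}\boxt B)$ induced by $N\boxtsub{A}(-)\boxtsub{A}M$, which sends $A$ to $B$, and the right center is handled by reversing the braiding. Your third step, checking that this bimodule equivalence respects composition and the central $\mathfrak{B}$-action (not merely the module action) so that the multiplication and $\beta$ carry over, is exactly what the paper leaves implicit, and it is worth writing out.
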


\section{Base Change for Module 2-categories}
Let $A$ be a braided algebra in $\mathfrak{B}$, and assume that relative tensor products over $A$ exist.

\begin{construction} \label{cstr:induction_2functor}
    There is a 2-functor $\mathrm{Ind}^+:\Mod_\mathfrak{B}(A) \to \Bimod_\mathfrak{B}(A,A)$ defined as follows.

    Given any right $A$-module $(M,r^M,\nu^M,\rho^M)$, we can endow it with a left $A$-action $l^M:A \, \Box \, M \xrightarrow{b_{A,M}} M \, \Box \, A \xrightarrow{r^M} M$, associator $\mu^M$, and left unitor $\lambda^M$:
    \[
    \mu^M := \quad \stringdiagramfigure{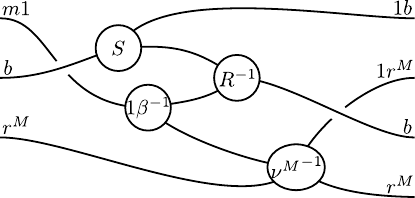} \quad ,
    \]
    \[
    \lambda^M := \quad \stringdiagramfigure{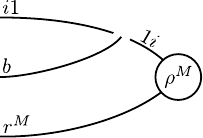} \quad .
    \]

    \noindent The left and right $A$-actions on $M$ are compatible with the 2-isomorphism:
    \[
    \alpha^M := \quad \stringdiagramfigure{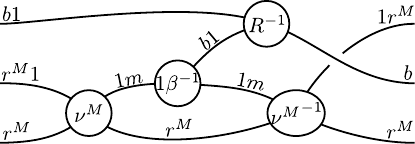} \quad .
    \]

    \noindent Given a right $A$-module 1-morphism $(f, \phi^f):M \to N$, we induce a left $A$-module 1-morphism structure:
    \[
    \chi^f := \quad \stringdiagramfigure{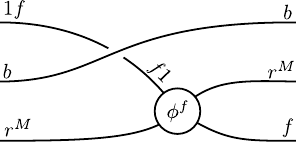} \quad .
    \]

    \noindent Finally, it is routine to check that the above structures satisfy the corresponding coherence conditions. This produces a 2-functor \[\mathrm{Ind}^+:\Mod_\mathfrak{B}(A) \to \Bimod_\mathfrak{B}(A,A).\]
\end{construction}

\begin{remark}\label{rmk:induction_2functor_is_fully_faithful}
    It is straightforward to see that this induction 2-functor is fully faithful on 2-morphisms.
\end{remark}

\begin{notation}\label{not:induced_monoidal_product_on_right_modules}
    The induction 2-functor $\mathrm{Ind}^+$ endows $\Mod_\mathfrak{B}(A)$ with a monoidal structure given by the relative tensor product, which we denote by $\Box^+_A$.

    Meanwhile, there is another induced $B$-action on $M$ by replacing the braiding $(b,R,S)$ by its reversal $(b^\bullet,S^\bullet,R^\bullet)$: 
    \[B \, \Box \, M \xrightarrow{b^\bullet_{M,B}} M \, \Box \, B \xrightarrow{n^M} M.\]
    Analogously we can define the other coherence data for this $B$-action. This produces another induction 2-functor $\mathrm{Ind}^-:\Mod_\mathfrak{B}(A) \to \Bimod_\mathfrak{B}(A,A)$, which is monoidal with respect to the relative tensor product $\Box^-_A$ induced by the reversed braiding.

    For consistency, when we refer to the monoidal structure on $\mathbf{Mod}_\mathfrak{B}(B)$, unless otherwise specified, we always refer to the monoidal structure $\Box^+_B$. In this case, we will simply write $\Box_B$ for $\Box^+_B$.
\end{notation}

\begin{proposition}\label{prop:algebras-in-modules}
An algebra in $\Mod_{\mathfrak{B}}(A)$ is equivalent to an algebra $B$ in $\mathfrak{B}$ together with a braided algebra 1-morphism
\[
    \widetilde{f}:A\to \Zl(B).
\]
\end{proposition}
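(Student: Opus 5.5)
We prove the statement by building constructions in both directions and showing they are mutually inverse. Throughout, we treat $\Mod_{\mathfrak{B}}(A)$ as a monoidal 2-category via $\Box_A=\Box^+_A$ (Notation \ref{not:induced_monoidal_product_on_right_modules}).

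\textbf{From an algebra in $\Mod_{\mathfrak{B}}(A)$ to the pair $(B,\widetilde f)$.} Let $B$ be an algebra in $\Mod_{\mathfrak{B}}(A)$, with multiplication $B\boxtsub{A}B\to B$ and unit $A\to B$.

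1. Precomposing the multiplication with the universal balanced map $t_{B,B}\colon B\boxt B\to B\boxtsub{A}B$ of Definition \ref{def:relative_tensor_product} gives $m^B\colon B\boxt B\to B$.
2. Precomposing the unit with $i^A$ gives $i^B\colon \I\to B$.
3. The associator and unitors descend from those in $\Mod_{\mathfrak{B}}(A)$ by the universal property, together with the unitors of Proposition \ref{prop:associativity_and_unitality_of_relative_tensor_products}. So $B$ is an algebra in $\mathfrak{B}$.
4. Set $f\colon A\to B$ to be the unit 1-morphism. Using the unitor $B\boxtsub{A}A\simeq B$ and the module 1-morphism structure of the unit, one gets $m^B\circ(1\boxt f)\simeq r^B$ and $m^B\circ(f\boxt 1)\simeq r^B\circ b_{A,B}$. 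The second holds because the left action on $\mathrm{Ind}^+(B)$ is $r^B\circ b_{A,B}$ and the unit is a bimodule map.
5. Combining the two 2-isomorphisms in step 4 produces $\psi^f\colon m\circ(f\boxt 1_B)\to m\circ(1_B\boxt f)\circ b_{A,B}$. This makes $(A,f,\psi^f)$ a left commuting object over $B$ (Definition \ref{def:commuting_objects}).
6. The terminal property of $\Zl(B)$ (Definition \ref{def:left_center}) then yields $\widetilde f\colon A\to\Zl(B)$ with $\chi^{\widetilde f}\colon f\simeq\zeta\circ\widetilde f$.
7. To make $\widetilde f$ a braided algebra 1-morphism, compare $\widetilde f\circ m^A$ and $\widetilde m\circ(\widetilde f\boxt\widetilde f)$ as 1-morphisms in $\Comm(B)$, and similarly for the units. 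Terminality makes the resulting 2-cells unique, which gives the coherence equations \eqref{eqn:Algebra1MorphismCoh1}--\eqref{eqn:Algebra1MorphismCoh3} for free. Braiding compatibility \eqref{eqn:BraidedAlgebra1Morphism} likewise reduces to an equality of 2-morphisms into the terminal object. The one hypothesis that enters here is that $B$ is an $A$-module algebra, which yields $f\circ m^A\simeq m^B\circ(f\boxt f)$.

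\textbf{From $(B,\widetilde f)$ back to an algebra in $\Mod_{\mathfrak{B}}(A)$.} Given $B$ and $\widetilde f$, put $f=\zeta\circ\widetilde f$; this is an algebra 1-morphism by Proposition \ref{prop:left_center_braided}.

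1. Define the right action $r^B=m\circ(1_B\boxt f)$, which makes $B$ a right $A$-module.
2. The commuting structure of $\zeta$, transported along $\widetilde f$, gives $\psi^f$. This makes $m\colon B\boxt B\to B$ $A$-balanced for the actions $r^B$ and $l=r^B\circ b_{A,B}$, so $m$ factors through $B\boxtsub{A}B$.
3. The balancing pentagon \eqref{eqn:Balanced1MorAssociativity} follows from \eqref{eqn:commuting_object_associativity} together with the multiplicativity of $\widetilde f$. This is where braidedness of $\widetilde f$ is used: it guarantees that the two ways of moving $A\boxt A$ past $B$ agree.
4. The unit is $f\colon A\to B$.
5. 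Module 1-morphism structures and the algebra coherences come from the universal property of $\boxtsub{A}$.

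Checking that the two constructions are mutually inverse, up to equivalence on 1- and 2-morphisms, uses uniqueness in the terminal object and in the relative tensor product.

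\textbf{Main obstacle.} The hardest part is the compatibility with the monoidal structure of $\Mod_{\mathfrak{B}}(A)$. Concretely, one must show that $A$-linearity of the multiplication $B\boxtsub{A}B\to B$, with respect to the action through $\mathrm{Ind}^+$, corresponds exactly to the commuting-object axioms \eqref{eqn:commuting_object_associativity} together with the braided condition on $\widetilde f$. This requires tracking the modifications $R,S$ in the left action $\mu^M$ of Construction \ref{cstr:induction_2functor}. The plan is to verify it first on $A\boxt B\boxt B$ by string diagrams and then descend along $t_{B,B}$.
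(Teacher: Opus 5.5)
Your outline follows the paper's proof. You restrict the algebra structure along the lax monoidal forgetful 2-functor (via $t_{B,B}$ and $i^A$). You promote the unit $f\colon A\to B$ to a left commuting object by combining the two unitors with the fact that the left action of $\mathrm{Ind}^+(B)$ is $r^B\circ b_{A,B}$, and factor through $\Zl(B)$ by terminality. Conversely, you descend $m$ along $t_{B,B}$ using $\psi^f$. Your converse is more explicit than the paper's. You also correctly locate the use of braidedness of $\widetilde f$ in the balancing pentagon, where it cancels the copy of $\beta^A$ inside the associator of $\mathrm{Ind}^+(B)$ (Construction~\ref{cstr:induction_2functor}).

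\textbf{The gap is the braided-compatibility claim in step 7 of the forward direction: it does not follow from terminality.}
\begin{itemize}
\item Terminality equates two parallel 2-cells into $\Zl(B)$ only when both are 2-morphisms of $\Comm(B)$ between the same commuting-object 1-morphisms. For the two sides of the braided equation, arranging this is, after whiskering with $\zeta$, equivalent to the equation itself.
\item Concretely, the restriction $\psi^f\circ(1_A\boxt f)\colon m^B\circ(f\boxt f)\Rightarrow m^B\circ(f\boxt f)\circ b_{A,A}$ must agree with $f\beta^A$, up to inversion and conjugation by the multiplicativity isomorphism $f\circ m^A\simeq m^B\circ(f\boxt f)$.
\item These are two candidate identifications of a commuting structure on $A\boxt A$ with its pullback along $b_{A,A}$. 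They live over $A\boxt A$, which is not terminal, so nothing forces them to coincide.
\item Your forward construction never uses $\beta^A$, while the equation contains it. So the single input you name, multiplicativity of $f$, cannot produce it.
\end{itemize}

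\textbf{The missing idea is where $\beta^A$ is hidden in $\Mod_{\mathfrak{B}}(A)$.} The unit object $A$ is identified with $\mathrm{Ind}^+(A)$, whose left action is $m^A\circ b_{A,A}$, via $\beta^A$. Hence, after restricting along $t_{A,A}$, the canonical comparison between the left and right unitors of $A\boxtsub{A}A$ is $\beta^A$. Your $\psi^f$ is assembled from the unitor 2-cells $\lambda$ and $\rho$ of $B$ as an algebra in $\Mod_{\mathfrak{B}}(A)$. Restrict it along $1_A\boxt f$ and use the unit coherence of that pseudomonoid at $u\boxtsub{A}u$, where $u$ is the unit (the analogue of $\lambda_{\I}=\rho_{\I}$). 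This yields the required identity. With it in hand, terminality finishes the argument as you intend. The paper also only asserts this verification, but the mechanism you give for it is not correct.
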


\begin{proof}
    The forgetful 2-functor \[\mathbf{U}: \mathbf{Mod}_\mathfrak{B}(A) \to \mathfrak{B}\] is lax monoidal. The image of the unit of an algebra in $\mathbf{Mod}_\mathfrak{B}(A)$ under this forgetful 2-functor becomes an algebra 1-morphism $f:A \to B$ in $\mathfrak{B}$. Moreover, $f$ can be promoted to a commuting object over $B$ with the 2-isomorphism induced by 
    \[
    \begin{tikzcd}
        {A \, \Box \, B}
            \arrow[dd,"{b}_{A,B}"']
            \arrow[r,"t_{A,B}"]
        & {A \, \Box_A \, B}
            \arrow[r,"f \, \Box_A \, B" {name=U}]
            \arrow[dr,"\pmb{l}_B"']
        & {B \, \Box_A \, B}
            \arrow[rd,"m"]
            \arrow[d,Rightarrow,shorten <=5pt,shorten >=5pt,"{\lambda}"]
        & {}
        \\ {}
        & {}
        & {B}
            \arrow[r,equal]
            \arrow[d,Rightarrow,shorten <=5pt,shorten >=5pt,"{\rho^{-1}}"]
        & {B}
        \\ {B \, \Box \, A}
            \arrow[r,"t_{B,A}"']
        & {B \, \Box_A \, A}
            \arrow[r,"B \, \Box_A \, f"' {name=D}]
            \arrow[ur,"\pmb{r}_B"]
        & {B \, \Box_A \, B}
            \arrow[ur,"m"']
        & {}
    \end{tikzcd},
    \]
    where 
    \begin{itemize}
        \item $\pmb{l}$ and $\pmb{r}$ are the left and right 1-unitors of $\mathbf{Mod}_\mathfrak{B}(A)$ in \eqref{eqn:relative_tensor_product_unitor};
        
        \item $t_{A,B}$ and $t_{B,A}$ are the canonical $A$-balanced 1-morphisms in \eqref{eqn:relative_tensor_product_balanced_1morphism}.
    \end{itemize} 
    
    \noindent The pentagon on the left is filled by a 2-isomorphism since the left $A$-action on $B$ is induced by the right $A$-action on $B$ in Construction \ref{cstr:induction_2functor}. Hence, by the universal property of the left center, $f$ factors through a 1-morphism in $\mathbf{Comm}(B)$: \[\widetilde{f}:A \to \mathrm{Z}^l_1(B).\] 

    \noindent The algebra 1-morphism structure of $\widetilde{f}$ is constructed in a way similar to the proof of Proposition \ref{prop:left_center_braided}. Finally, one can check that it is indeed a braided algebra 1-morphism by verifying equation \eqref{eqn:BraidedAlgebra1Morphism}.

    Conversely, given a braided algebra 1-morphism $\widetilde{f}:A \to \mathrm{Z}^l_1(B)$, we can endow $B$ with a right $A$-module structure. Then the algebra structure of $B$ can be promoted to an algebra structure in $\mathbf{Mod}_\mathfrak{B}(A)$ by the universal property of the relative tensor product $\Box_A$.
\end{proof}

\begin{theorem}\label{thm:base-change-ordinary-modules}
Under the hypotheses of Proposition \ref{prop:algebras-in-modules}, there is an equivalence of 2-categories
\[
    \Mod_{\mathfrak{B}}(B)
    \simeq
    \Mod_{\Mod_{\mathfrak{B}}(A)}(B).
\]
\end{theorem}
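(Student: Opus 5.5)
We construct mutually inverse 2-functors directly, mirroring the 1-categorical statement that modules over $B$ in $\Mod_{\mathcal C}(A)$ are the same as $B$-modules when $B$ is an $A$-algebra. Here $B$ is an algebra in $\Mod_{\mathfrak{B}}(A)$, which by Proposition \ref{prop:algebras-in-modules} is an algebra in $\mathfrak{B}$ together with a braided algebra 1-morphism $\widetilde{f}:A\to\Zl(B)$. The right $A$-action on $B$ is $B\boxt A\xrightarrow{1\boxt f}B\boxt B\xrightarrow{m}B$ with $f=\zeta\circ\widetilde{f}$. The key tool is the universal property of the relative tensor product (Definition \ref{def:relative_tensor_product}). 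A right $B$-module in $\Mod_{\mathfrak{B}}(A)$ is an $A$-module $M$ with an action $M\boxtsub{A}B\to M$, and precomposing with $t_{M,B}$ identifies this action with an $A$-balanced 1-morphism $M\boxt B\to M$, where $B$ carries the left $A$-action induced via $\mathrm{Ind}^+$ (Construction \ref{cstr:induction_2functor}).

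\textbf{Forgetful direction.} Define $\Phi:\Mod_{\Mod_{\mathfrak{B}}(A)}(B)\to\Mod_{\mathfrak{B}}(B)$ by sending $(M,\, r:M\boxtsub{A}B\to M)$ to $M$ with action $r\circ t_{M,B}$. The associator and unitor are obtained by whiskering those of $M$ with the canonical comparison 1-morphisms. These comparisons are $t_{M\boxtsub{A}B,B}\circ(t_{M,B}\boxt 1)$ versus $t\circ(1\boxt t_{B,B})$, composed with the associator of Proposition \ref{prop:associativity_and_unitality_of_relative_tensor_products}, together with the unitor $\pmb{r}$ and the unit of $B$ in $\Mod_{\mathfrak{B}}(A)$. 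Module 1-morphisms and 2-morphisms are sent to their underlying data. The coherence equations follow because the comparison 2-cells for $t$ satisfy the coherences defining the monoidal structure $\Box_A$.

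\textbf{Inverse direction.} Define $\Psi:\Mod_{\mathfrak{B}}(B)\to\Mod_{\Mod_{\mathfrak{B}}(A)}(B)$ as follows. Given a right $B$-module $(N,n,\nu,\rho)$ in $\mathfrak{B}$, first restrict along the algebra 1-morphism $f:A\to B$ to obtain a right $A$-module $N\boxt A\xrightarrow{1\boxt f}N\boxt B\xrightarrow{n}N$. Next, show that $n:N\boxt B\to N$ is $A$-balanced. The balancing 2-isomorphism $n\circ(n(1\boxt f)\boxt 1_B)\Rightarrow n\circ(1_N\boxt m(b_{A,B})\ldots)$ is built from $\nu^N$, followed by $\psi^{\zeta}$ whiskered by $\widetilde{f}$ (the commuting structure, which turns $m(f\boxt 1)$ into $m(1\boxt f)b_{A,B}$), followed by $\nu^{-1}$. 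By the universal property, $n$ then descends to $\bar n:N\boxtsub{A}B\to N$. Finally, show that $\bar n$ is a right $A$-module 1-morphism and that the associator and unitor descend as well, using the 2-categorical universal property on 2-morphisms: a balanced 2-morphism out of $t$ is uniquely induced. The equations in $\Mod_{\mathfrak{B}}(A)$ reduce to equations after precomposition with $t$, where they become the module axioms of $N$.

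\textbf{Equivalence and main obstacle.} By construction, $\Phi\Psi\cong\mathrm{Id}$ via the 2-isomorphism $\bar n\circ t\cong n$. Likewise, $\Psi\Phi\cong\mathrm{Id}$ because any $A$-module action $r$ on $M$ in the image agrees with the restricted action: the unit of $B$ in $\Mod_{\mathfrak{B}}(A)$ is the $A$-module map $A\to B$ given by $f$, so $r\circ t\circ(1\boxt f)\cong r_A$ using the unitor $M\boxtsub{A}A\simeq M$. On Hom-categories both functors are the identity on underlying data up to these isomorphisms. The main obstacle is verifying that the balancing 2-cell for $n$ satisfies the pentagon \eqref{eqn:Balanced1MorAssociativity}. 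I would attack it by reducing, via the pentagon for $\nu$, to the associativity coherence \eqref{eqn:commuting_object_associativity} of the commuting structure on $\zeta$. This also requires the multiplicativity of $\widetilde{f}$ (its algebra 1-morphism structure $\chi^{\widetilde m}$) and the modification $R$ of the braiding.
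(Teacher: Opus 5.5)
Your proposal follows essentially the same route as the paper: the forgetful direction comes from the lax monoidal structure $t$ of $\Mod_{\mathfrak{B}}(A)\to\mathfrak{B}$. The inverse restricts a $B$-action along $\zeta\circ\widetilde{f}$ and uses the commuting data of $\widetilde{f}$ to descend it through $t$, and the two constructions are then checked to be mutually inverse up to equivalence.

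Two small corrections to the details. First, the balancing 2-cell for $n$ is just $\nu^N$ followed by the whiskered commuting structure, together with the naturality 2-cell of $b$ at $\widetilde{f}$; there is no final $\nu^{-1}$, since after $\nu^N$ and the commuting structure you already land on $n\circ(1_N\boxt l^B)$. Second, the $\mathrm{Ind}^+$ associator on $B$ is built from the braiding $\beta$ of $A$, so its pentagon cannot be closed using only multiplicativity, \eqref{eqn:commuting_object_associativity} and $R$; you also need the braided condition \eqref{eqn:BraidedAlgebra1Morphism} on $\widetilde{f}$, and the modification $S$.
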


\begin{proof}
The lax monoidal forgetful 2-functor $\Mod_{\mathfrak{B}}(A)\to \mathfrak{B}$ induces a 2-functor 
\[\Mod_{\Mod_{\mathfrak{B}}(A)}(B) \to \Mod_{\mathfrak{B}}(B).\] 
For the other direction, a right $B$-module $M$ obtains a compatible right $A$-action by precomposing with
\[
    A\xrightarrow{\widetilde{f}}\Zl(B)\xrightarrow{\zeta}B.
\]
The commuting object data for $\widetilde{f}$ enables us to descend the $B$-action on $M$ to the relative tensor product, so $M$ becomes a right $B$-module internal to $\Mod_{\mathfrak{B}}(A)$. Lastly, one can check that the above constructions are mutually inverse up to 2-natural equivalences.
\end{proof}

\section{Full Centers}

The left and right centers of §\ref{def:left_center} live \emph{internal} to a braided monoidal 2-category $\mathfrak{B}$. The full center construction instead places the center of an algebra \emph{relative} to the Drinfeld center.

\subsection{Drinfeld center}
Let us first recall the definition of the Drinfeld center of a monoidal 2-category. The center of a monoidal 2-category was first constructed by Baez--Neuchl \cite{BN96} as a braided monoidal 2-category, in analogy with the Drinfeld center of a monoidal category as its quantum double; the associated generalized and higher centers were further studied by Crans \cite{Cr}. More recently, the center was computed explicitly for the monoidal 2-category $\mathbf{2Vect}_G^\pi$ of $\pi$-twisted $G$-graded finite semisimple categories by Kong--Tian--Zhou \cite{KTZ20}, where it describes the topological defects of the $(3+1)$d Dijkgraaf--Witten theory, and was developed for fusion 2-categories by D\'ecoppet \cite{D9}. Our formulation below, phrased in terms of the Drinfeld centralizer of a monoidal 2-functor, follows the semistrict conventions of D\'ecoppet.

\begin{definition} \label{def:drinfeld_centralizer}
    For a monoidal 2-functor $F:\mathfrak{C} \to \mathfrak{D}$ between semistrict monoidal 2-categories $\mathfrak{C}$ and $\mathfrak{D}$, its \textit{Drinfeld centralizer} is a monoidal 2-category $\mathscr{Z}_1(F)$ where
    \begin{itemize}
        \item [(1)] An object is a triple $(x,b^x_{-},R^x_{-,-})$, where 
        \begin{itemize}
            \item $x$ is an object in $\mathfrak{D}$;

            \item $b^x_{-}$ is a 2-natural isomorphism
            $$\begin{tikzcd}[sep=small]
            {x F(y_0)} 
                \arrow[rrr,"b^x_{y_0}"] 
                \arrow[ddd,"1 F(f)"']
            & {} 
            & {} 
            & {F(y_0) x}
                \arrow[ddd,"F(f) 1"]
                \arrow[dddlll,Rightarrow,"b^x_{f}"',shorten >=3ex, shorten <= 3ex] 
            \\ {} & {} & {} & {}
            \\ {} & {} & {} & {}
            \\ {x F(y_1)}
                \arrow[rrr,"b^x_{y_1}"']
            & {} 
            & {} 
            & {F(y_1) x}
            \end{tikzcd}$$ which is natural in 1-morphism $f:y_0 \to y_1$ in $\mathfrak{C}$;

            \item $R^x_{-,-}$ is an invertible modification 
            $$\begin{tikzcd}[sep=25pt]
            xF(yz) 
                \arrow[rr, "b^x_{yz}"]
            & {} \arrow[d, Rightarrow, "R^x_{y,z}"]          & F(yz)x 
            \\ {x F(y)F(z)}
                \arrow[r, "b^x_{y} 1"'] 
                \arrow[u,"1 F_{y,z}"]
            & {F(y)xF(z) }
                \arrow[r, "1 b^x_{z}"'] 
            & {F(y)F(z)x}    
                \arrow[u,"F_{y,z}1"']
            \end{tikzcd}$$ 
            \end{itemize} satisfying the coherence condition: for objects $y,z,w$ in $\mathfrak{C}$, the equation holds
            
            \begin{equation}\label{eqn:DrinfeldCenterObject}
            \begin{tabular}{@{}ccc@{}}

            \stringdiagramfigure{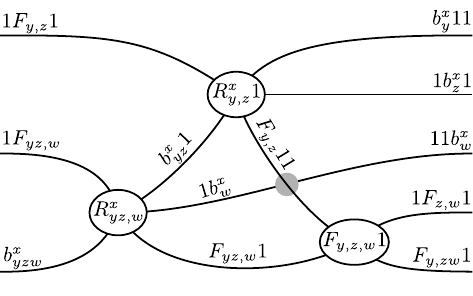} & = & \stringdiagramfigure{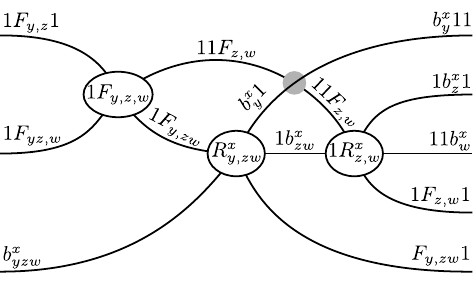}

            \end{tabular}
            \end{equation}
            
        \begin{itemize}
            \item [] in $\mathbf{Hom}_\mathfrak{D}(x \, \Box^\mathfrak{D} \, F(y) \, \Box^\mathfrak{D} \, F(z) \, \Box^\mathfrak{D} \, F(w), F(y \, \Box^\mathfrak{C} \, z \, \Box^\mathfrak{C} \, w) \, \Box^\mathfrak{D} \, x)$.
        \end{itemize}

        \item [(2)] A 1-morphism between objects $(x,b^x_{-},R^x_{-,-})$ and $(y,b^y_{-},R^y_{-,-})$ is a pair $(g,b^g_{-})$ where
        \begin{itemize}
            \item $g:x \to y$ is a 1-morphism in $\mathfrak{D}$;

            \item $b^g_{-}$ is an invertible modification
            $$\begin{tikzcd}[sep=small]
            {x F(z)} 
                \arrow[rrr,"b^x_{z}"] 
                \arrow[ddd,"g 1"']
            & {} 
            & {} 
            & {F(z) x}
                \arrow[ddd,"1 g"]
                \arrow[dddlll,Rightarrow,"b^g_{z}"',shorten >=3ex, shorten <= 3ex] 
            \\ {} & {} & {} & {}
            \\ {} & {} & {} & {}
            \\ {y F(z)}
                \arrow[rrr,"b^y_{z}"']
            & {} 
            & {} 
            & {F(z) y}
            \end{tikzcd}$$ which is natural for object $z$ in $\mathfrak{C}$, satisfying the coherence condition: for objects $z,w$ in $\mathfrak{C}$, the equation holds
            \end{itemize}
            \begin{equation}\label{eqn:DrinfeldCenter1Morphism}
            \begin{tabular}{@{}ccc@{}}

            \stringdiagramfigure{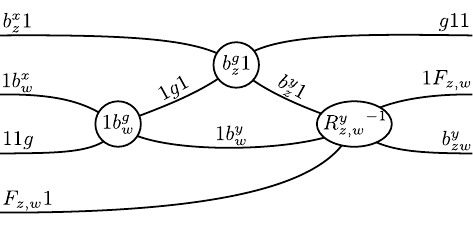} & = & \stringdiagramfigure{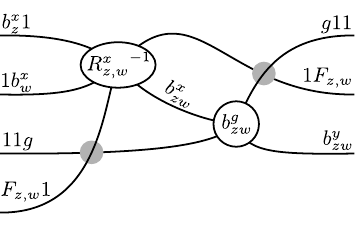}

            \end{tabular}
            \end{equation}
            \begin{itemize}
                \item [] in $\mathbf{Hom}_\mathfrak{D}(x \, \Box^\mathfrak{D} \, F(z) \, \Box^\mathfrak{D} \, F(w), F(z \, \Box^\mathfrak{C} \, w) \, \Box^\mathfrak{D} \, y)$.
            \end{itemize}
        
        \item [(3)] A 2-morphism between $(g,b^g_{-})$ and $(h,b^h_{-})$, which are 1-morphisms between $(x,b^x_{-},R^x_{-,-})$ and $(y,b^y_{-},R^y_{-,-})$, consists of a 2-morphism $\varphi:g \to h$ in $\mathfrak{D}$ satisfying the coherence condition: for object $z$ in $\mathfrak{C}$, the equation holds
            \begin{equation}\label{eqn:DrinfeldCenter2Morphism}
            \begin{tabular}{@{}ccc@{}}

            \stringdiagramfigure{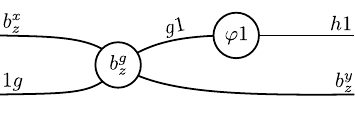} & = & \stringdiagramfigure{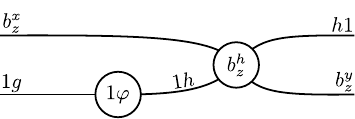}

            \end{tabular}
            \end{equation}
            
            \noindent in $\mathbf{Hom}_\mathfrak{D}(x \, \Box^\mathfrak{D} \, F(z), F(z) \, \Box^\mathfrak{D} \, y)$.
            
        \item [(4)] Monoidal unit is given by the monoidal unit $\mathbf{I}^\mathfrak{D}$ together with $b^\mathbf{I}_{-}$ given by the identity 2-natural isomorphism on $F$, and $R^\mathbf{I}_{-,-}$ given by the identity modification.

        \item [(5)] Monoidal product of $(x,b^x_{-},R^x_{-,-})$ and $(y,b^y_{-},R^y_{-,-})$ is given by the monoidal product $x \, \Box^\mathfrak{D} \, y$ together with $b^{xy}_{-} :=(b^x_{-} \, \Box^\mathfrak{D} \, \mathbf{1}_y) \circ (\mathbf{1}_x \, \Box^\mathfrak{D} \, b^y_{-})$ and $R^{xy}_{-,-}$ given by 
        \[\begin{tikzcd}[column sep=35pt]
            {xyF(zw)}
                \arrow[rr, "b^{xy}_{zw}"]
                \arrow[rd, "1 b^y_{zw}"] 
            & {}
            & {F(zw)xy}
            \\ {xyF(z)F(w)}
                \arrow[dd,"1 b^y_{z}1"']
                \arrow[u,"11F_{z,w}"]
            & {xF(zw)y}
                \arrow[ru, "b^x_{zw}1"] 
            & {F(z)F(w)xy}  
                \arrow[u,"F_{z,w}11"']
            \\ {}
                \arrow[r, Rightarrow, "1 R^y_{z,w}",shorten >= 20pt,shorten <= 20pt]
            & {xF(z)F(w)y} 
                \arrow[rd,"b^x_{z}11"']
                \arrow[r, Rightarrow, "R^x_{z,w} 1",shorten >= 20pt,shorten <= 20pt]
                \arrow[u,"1F_{z,w}1"]
            & {}
            \\ {xF(z)yF(w)}
                \arrow[ru,"11b^y_{w}"']
            & {}
            & {F(z)xF(w)y}
                \arrow[uu,"1b^x_{w}1"']
            \end{tikzcd}\] for objects $z,w$ in $\mathfrak{C}$.

        \item [(6)] Monoidal product of 1-morphisms:
        \begin{itemize}
            \item $(g,b^g_{-})$, from $(x_0,b^{x_0}_{-},R^{x_0}_{-,-})$ to $(y_0,b^{y_0}_{-},R^{y_0}_{-,-})$, 
            
            \item $(h,b^h_{-})$, from $(x_1,b^{x_1}_{-},R^{x_1}_{-,-})$ to $(y_1,b^{y_1}_{-},R^{y_1}_{-,-})$,
        \end{itemize}  
        
        \noindent consists of 1-morphism $(1 \, \Box \, h) \circ (g \, \Box \, 1) :x_0 \, \Box^\mathfrak{D} \, x_1 \to y_0 \, \Box^\mathfrak{D} \, y_1$ in $\mathfrak{D}$ together with an invertible modification

        \begin{equation}\label{eqn:DrinfeldCenterProduct1Morphism}
        \begin{tabular}{@{}cc@{}}
           $b^{gh}_{z}$ := & \stringdiagramfigure{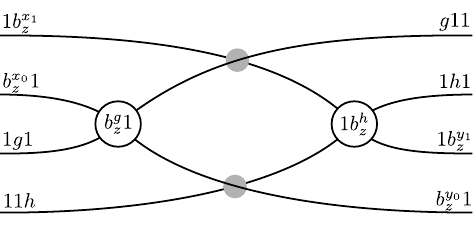}
        \end{tabular}
        \end{equation} given for object $z$ in $\mathfrak{C}$.

        \item [(7)] Interchangers of 1-morphisms are induced by those in $\mathfrak{D}$.
        
        \item [(8)] Monoidal product of 2-morphisms is just the monoidal product of the corresponding 2-morphisms in $\mathfrak{D}$.
    \end{itemize}
\end{definition}

\begin{lemma}\label{lem:drinfeld_centralizer_forgetful_2functor}
    By the above construction, there is a canonical forgetful 2-functor $\mathscr{Z}_1(F) \to \mathfrak{D}$, preserving the monoidal structures.
\end{lemma}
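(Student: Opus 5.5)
We will write down the forgetful 2-functor $\mathbf{U}\colon \mathscr{Z}_1(F)\to\mathfrak{D}$ explicitly and equip it with a \emph{strict} monoidal structure. The construction itself shows that the structure maps can be identities. On objects, $\mathbf{U}$ sends $(x,b^x_{-},R^x_{-,-})$ to $x$. On 1-morphisms it sends $(g,b^g_{-})$ to $g$, and on 2-morphisms it sends $\varphi$ to $\varphi$. Composition and identities in $\mathscr{Z}_1(F)$ are computed by composing the underlying 1- and 2-morphisms of $\mathfrak{D}$ and pasting the modifications $b^g$. The identity 1-morphism is $(1_x,\mathrm{id})$. Hence $\mathbf{U}$ is a strict 2-functor, and there is nothing to check beyond the fact that composition in $\mathscr{Z}_1(F)$ is defined in this way, which is how we read Definition \ref{def:drinfeld_centralizer}.

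For the monoidal structure, we compare items (4)--(8) of Definition \ref{def:drinfeld_centralizer} with the monoidal structure of $\mathfrak{D}$.
\begin{itemize}
    \item By item (4), the unit of $\mathscr{Z}_1(F)$ has underlying object $\mathbf{I}^\mathfrak{D}$, so we take $\iota=1_{\mathbf{I}}$.
    \item By item (5), the underlying object of a product is $x\,\Box^\mathfrak{D}\,y$, so we take $\chi_{x,y}=1_{x\Box y}$.
    \item Naturality of $\chi$ must be checked on 1-morphisms. By item (6), the product of $(g,b^g)$ and $(h,b^h)$ has underlying 1-morphism $(1\,\Box\, h)\circ(g\,\Box\, 1)$, which is exactly the Gray-monoid product of $g$ and $h$ in $\mathfrak{D}$. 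So we can take the naturality 2-cells $\chi_{g,h}$ to be identities.
    \item By items (7) and (8), interchangers and products of 2-morphisms are those of $\mathfrak{D}$. Hence the identity 2-natural transformation is compatible with interchangers, which is the extra condition a monoidal 2-functor between Gray monoids must satisfy.
\end{itemize}
We then set $\omega$, $\gamma$ and $\delta$ to be identity modifications. Equations \eqref{eqn:Monoidal2FunctorAssociativity} and \eqref{eqn:Monoidal2FunctorUnitality} then become trivial identities of identity 2-cells.

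The only point needing care is that $\mathscr{Z}_1(F)$ is indeed a semistrict monoidal 2-category whose associators and unitors are strict on objects, and that this matches $\mathfrak{D}$. For the associativity $(xy)z=x(yz)$, we will check that $b^{(xy)z}=b^{x(yz)}$ as 2-natural transformations. Both are the same composite of whiskered $b^x,b^y,b^z$, by strictness of the Gray monoid $\mathfrak{D}$ on objects and 1-morphisms.

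We expect the main obstacle to be showing that the two bracketings of the modification $R^{xyz}$ agree. This follows by expanding the pasting diagram in item (5) twice. Both expansions are composites of $R^x,R^y,R^z$ with interchangers and naturality cells $b^{-}_{F_{z,w}}$, and they are identified by naturality of interchangers and of $b^y$, as in the Baez--Neuchl and D\'ecoppet treatments. Unitality on objects is immediate, because $b^\mathbf{I}$ and $R^\mathbf{I}$ are identities and $\mathfrak{D}$ has strict unitors.

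Granting this, $\mathbf{U}$ is a strict monoidal 2-functor, and in particular a (strong) monoidal 2-functor in the sense of Definition \ref{def:monoidal_2-functor}.
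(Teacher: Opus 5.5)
Your proposal is correct and matches the argument the paper leaves implicit (``by the above construction''): the forgetful 2-functor is strict with identity monoidal structure, because the unit, products, interchangers and 2-cell products of $\mathscr{Z}_1(F)$ are by definition those of $\mathfrak{D}$. Your extra check that $\mathscr{Z}_1(F)$ is genuinely semistrict goes beyond what the paper verifies, but it has one small inaccuracy: no cells $b^{-}_{F_{z,w}}$ occur, and they are not even defined, since $F_{z,w}$ is not the image of a 1-morphism of $\mathfrak{C}$. Both bracketings of $R^{xyz}$ are pastings of whiskered $R^x,R^y,R^z$ and interchangers only, and they agree by the compatibility of interchangers with composition and whiskering together with the middle-four interchange law for 2-cells in $\mathfrak{D}$.
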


\begin{definition} \label{def:drinfeld_center}
    The \textit{Drinfeld center} of a monoidal 2-category $\mathfrak{C}$ is defined to be the monoidal centralizer $\mathscr{Z}_1(\mathrm{Id}_\mathfrak{C})$. We denote it by $\mathscr{Z}_1(\mathfrak{C})$.
\end{definition}

\begin{lemma} \label{lem:drinfeld_center_is_braided}
    Drinfeld center $\mathscr{Z}_1(\mathfrak{C})$ is equipped with a canonical braided monoidal structure.
\end{lemma}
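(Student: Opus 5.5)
We construct the braiding on $\mathscr{Z}_1(\mathfrak{C})$ explicitly from the half-braidings, following the semistrict conventions of Definition \ref{def:drinfeld_centralizer} with $F=\mathrm{Id}_\mathfrak{C}$. The choice of $\pmb{\phi}$ and the fact that $F_{y,z}$ is the identity make every structure map a string diagram in $\mathfrak{C}$. For objects $(x,b^x,R^x)$ and $(y,b^y,R^y)$ we define the braiding 1-morphism
\[
b_{x,y} := b^x_{y} \colon x\boxt y \to y \boxt x .
\]
We then show it underlies a 1-morphism in $\mathscr{Z}_1(\mathfrak{C})$ from $x\boxt y$ to $y\boxt x$, whose half-braidings are given by item (5) of Definition \ref{def:drinfeld_centralizer}. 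The required modification $b^{b_{x,y}}_z$ is assembled from $R^x_{y,z}$, $(R^x_{z,y})^{-1}$ and the naturality 2-cell $b^x_{b^y_z}$ of the 2-natural transformation $b^x_-$ applied to the 1-morphism $b^y_z$. Checking \eqref{eqn:DrinfeldCenter1Morphism} for this pair reduces, after expanding $R^{xy}$ and $R^{yx}$, to the coherence \eqref{eqn:DrinfeldCenterObject} of $R^x$ together with naturality of $b^x$ with respect to the modifications $R^y$.

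Next we construct 2-naturality of $b$. For a 1-morphism $(g,b^g)\colon x\to x'$ the naturality 2-cell is $b^g_y$. For a 1-morphism $(h,b^h)\colon y\to y'$ it is the naturality cell $b^x_h$, conjugated by interchangers. We verify that these are 2-morphisms in $\mathscr{Z}_1(\mathfrak{C})$ via \eqref{eqn:DrinfeldCenter2Morphism}, using \eqref{eqn:DrinfeldCenter1Morphism} for $g$ and $h$. Compatibility with composition and with the interchangers of item (7) then follows from the modification property of $b^g$.

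We then define the modifications of Notation \ref{not:braiding_calculus}:
\[
R_{x,y,z} := R^x_{y,z} \quad\text{and}\quad S_{x,y,z} := \mathrm{id},
\]
where the latter makes sense because $b^{x\boxt y}$ is by definition $(b^x\boxt 1)\circ(1\boxt b^y)$. We check that these are 2-morphisms in $\mathscr{Z}_1(\mathfrak{C})$ and modifications. Axioms (e) and (f) hold by the unit conventions in item (4), which we may arrange strictly, together with the strictness of $F=\mathrm{Id}$.

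The main obstacle is the four coherence equations (a)--(d). Axiom (a) is essentially \eqref{eqn:DrinfeldCenterObject} for $x$. Axiom (b) is formal because $S$ is trivial. Axioms (c) and (d) mix $R$ with naturality cells $b^x_{b^y_z}$, and there we plan to use the fact that $b^x_-$ is 2-natural with respect to all 2-morphisms, together with the modification axiom for $R^x$ applied to $b^y$ and $R^y$. This is also where the semistrict interchanger bookkeeping is heaviest, so we would organize the argument as string-diagram rewrites moving the $b^x$ strand past all $y$-data. Alternatively, one can cite D\'ecoppet's semistrict treatment \cite{D9}, or Baez--Neuchl \cite{BN96}, for these verifications.
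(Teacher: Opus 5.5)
Your proposal is correct and uses the same construction as the paper: $b_{x,y}:=b^x_y$, $R_{x,y,z}:=R^x_{y,z}$ and $S_{x,y,z}:=\mathrm{id}$. The paper simply cites Crans \cite[3.1]{Cr} for the braided axioms, so your explicit half-braiding on $b^x_y$ (built from $R^x_{y,z}$, $b^x_{b^y_z}$ and $(R^x_{z,y})^{-1}$) fills in what the paper leaves implicit. Two small points need fixing. First, the strict unit axioms for $b_{x,\I}=b^x_{\I}$, $R^x_{\I,-}$ and $R^x_{-,\I}$ do not follow from item (4), which only describes the half-braiding of the unit object; you need to normalize the half-braidings of objects, or pass to the equivalent full sub-2-category of normalized ones. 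Second, you should note, as the paper does, that $b^x_y$ is an equivalence in $\mathscr{Z}_1(\mathfrak{C})$ and not only in $\mathfrak{C}$.
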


\begin{proof}
    We define the braiding on objects $(x, b^x_{-}, R^x_{-,-})$ and $(y, b^y_{-}, R^y_{-,-})$ in $\mathscr{Z}_1(\mathfrak{C})$ as $b_{x,y} := b^x_{y}$. This braiding can be extended to an adjoint 2-natural equivalence. Additionally, for $(x, b^x_{-}, R^x_{-,-})$, $(y, b^y_{-}, R^y_{-,-})$ and $(z, b^z_{-}, R^z_{-,-})$ in $\mathscr{Z}_1(\mathfrak{C})$, we introduce invertible modifications $R_{x,y,z}:= R^x_{y,z}$ and $S^x_{y,z}:=1_{xyz}$. These assignments establish a braiding on Drinfeld center $\mathscr{Z}_1(\mathfrak{C})$ according to \cite[3.1]{Cr}.
\end{proof}

\begin{remark} \label{rmk:Z_1(C)=ΩEnd(∑C)}
    One can also realize the Drinfeld center 
    \begin{equation}
        \mathscr{Z}_1(\mathfrak{C}) \simeq \mathbf{Fun}_{\mathfrak{C} \boxtimes \mathfrak{C}^{1mp}}(\mathfrak{C},\mathfrak{C})
    \end{equation}
    by \cite[2.2.1]{D9}. For fusion 2-category $\mathfrak{C}$, the 3-category $\mathbf{Bimod}(\mathfrak{C},\mathfrak{C})$ of $(\mathfrak{C},\mathfrak{C})$-bimodule 2-categories is equipped with the relative 2-Deligne tensor product $\boxtimes_\mathfrak{C}$. $\mathfrak{C}$ is the monoidal unit in this monoidal 3-category. $\mathscr{Z}_1(\mathfrak{C}) \simeq \mathbf{Fun}_{\mathfrak{C} \boxtimes \mathfrak{C}^{1mp}}(\mathfrak{C},\mathfrak{C})$ is the endo-Hom 2-category on the monoidal unit, hence it is equipped with a canonical braided monoidal 2-category structure. Finally, these two braided monoidal 2-category structures on $\mathscr{Z}_1(\mathfrak{C})$ will agree with each other.\footnote{Depending on the exact definitions of Drinfeld center and relative 2-tensor product, these two braided monoidal 2-category structure might agree up to flipping the direction of the braidings.}
\end{remark}

\begin{remark} \label{rmk:central_action_of_drinfeld_center_on_morita_dual}
    The Drinfeld center is a Morita invariant: when $\mathfrak{C}$ is a monoidal 2-category, $\mathfrak{M}$ is a left $\mathfrak{C}$-module 2-category, we have an equivalence of braided monoidal 2-categories 
    \begin{equation}
        \mathscr{Z}_1(\mathfrak{C}) \simeq \mathscr{Z}_1(\mathbf{End}_\mathfrak{C}(\mathfrak{M})^{1mp}).
    \end{equation}
    Hence, it induces a monoidal 2-functor $\mathscr{Z}_1(\mathfrak{C}) \to \mathbf{End}_\mathfrak{C}(\mathfrak{M})^{1mp}$ which factors through the Drinfeld center, i.e. a central $\mathscr{Z}_1(\mathfrak{C})$-action on $\mathbf{End}_\mathfrak{C}(\mathfrak{M})^{1mp}$.

    In particular, when $\mathfrak{M} \simeq \Mod_\mathfrak{C}(A)$ for some algebra $A$ in $\mathfrak{C}$, assuming the relative tensor products over $A$ exist, we can identify $\mathbf{End}_\mathfrak{C}(\Mod_\mathfrak{C}(A))^{1mp}$ with $\Bimod_\mathfrak{C}(A,A)$, and get a central $\mathscr{Z}_1(\mathfrak{C})$-action on $\Bimod_\mathfrak{C}(A,A)$.

    More concretely, for an object $(x,b^x_{-},R^x_{-,-})$ in $\mathscr{Z}_1(\mathfrak{C})$, its image in $\Bimod_\mathfrak{C}(A,A)$ is $x \boxt A$, with the left $A$-action given by 
    \begin{equation}\label{eqn:left_action_on_image_of_half-braiding}
        l^M \colon A \boxt x \boxt A \xrightarrow{(b^x_{A})^{\bullet} \boxt 1_A} x \boxt A \boxt A \xrightarrow{1_x \boxt m} x \boxt A,
    \end{equation}
    where $(b^x)^\bullet$ is the 2-adjoint inverse of 2-natural isomorphism $b^x$, and the right $A$-action given by
    \begin{equation}\label{eqn:right_action_on_image_of_half-braiding}
        r^M \colon x \boxt A \boxt A \xrightarrow{1_x \boxt m} x \boxt A.
    \end{equation}
\end{remark}

\begin{remark} \label{rmk:braiding_as_section_of_drinfeld_center}
    A braiding on $\mathfrak{C}$ is equivalent to a monoidal section of the canonical forgetful 2-functor $\mathscr{Z}_1(\mathfrak{C}) \to \mathfrak{C}$, i.e. an embedding $\mathfrak{C} \to \mathscr{Z}_1(\mathfrak{C})$ preserving monoidal structures such that its composition with the forgetful 2-functor is equivalent to the identity 2-functor $\mathrm{Id}_\mathfrak{C}$. 
\end{remark}

\begin{notation}\label{not:braided_embedding_into_drinfeld_center}
    For a braided monoidal 2-category $\mathfrak{B}$, we denote the braided embedding of $\mathfrak{B}$ into its Drinfeld center $\mathscr{Z}_1(\mathfrak{B})$ by
    \begin{equation}
        \iota_+ \colon \mathfrak{B} \to \mathscr{Z}_1(\mathfrak{B}); \qquad x \mapsto (x, b_{x,-}, R_{x,-,-}).
    \end{equation}
    Similarly, we can replace the braiding by its reverse to get another braided embedding
    \begin{equation}
        \iota_- \colon \mathfrak{B}^{2mp} \to \mathscr{Z}_1(\mathfrak{B}); \qquad x \mapsto (x, b^\bullet_{-,x}, R^\bullet_{-,-,x}),
    \end{equation}
    where $b^\bullet$ is the 2-adjoint inverse of 2-natural isomorphism $b$, and $R^\bullet$ is the induced 2-adjoint inverse of $R$.
\end{notation}

\subsection{Full center}
Let $\mathfrak{C}$ be a monoidal 2-category, and $\mathbf{U} \colon \Z_1(\mathfrak{C})\to \mathfrak{C}$ be the forgetful 2-functor. Let $A$ be an algebra in $\mathfrak{C}$. The 2-category of \emph{commuting half-braidings} of $A$ is defined as follows.

\begin{definition}\label{def:commuting_half-braidings}
    A \emph{commuting half-braiding} of $A$ consists of:
    \begin{enumerate}[nosep,label=(\roman*)] 
        \item A half-braiding $(x,b^x,R^x)$ in $\mathfrak{C}$, or equivalently an object in $\Z_1(\mathfrak{C})$;
        
        \item A 1-morphism $f \colon x \to A$ in $\mathfrak{C}$;
        
        \item A 2-isomorphism
        \begin{equation}
            \psi^f \colon m \circ (f \, \Box \, 1_A) \circ b^x_{A} \Rightarrow m \circ (1_A \, \Box \, f),
        \end{equation} satisfying the same coherence equations \eqref{eqn:commuting_object_associativity} and \eqref{eqn:commuting_object_unitality} as in Definition \ref{def:commuting_objects}.
    \end{enumerate}
\end{definition}

\begin{definition}\label{def:commuting_half-braidings_morphisms}
    A 1-morphism of commuting half-braidings from $(x,b^x,R^x,f,\psi^f)$ to $(y,b^y,R^y,g,\psi^g)$ consists of:
    \begin{enumerate}[nosep,label=(\roman*)]
        \item A 1-morphism $(h,b^h)$ in $\Z_1(\mathfrak{C})$ from $(x,b^x,R^x)$ to $(y,b^y,R^y)$;
        
        \item A 2-isomorphism $\chi^h \colon f \to g \circ h$ in $\mathfrak{C}$, satisfying the coherence equation \eqref{eqn:commuting_object_1morphism} as in Definition \ref{def:commuting_objects_morphisms}.
    \end{enumerate}

    A 2-morphism of commuting half-braidings from $(h,b^h,\chi^h)$ to $(k,b^k,\chi^k)$, which are 1-morphisms from $(x,b^x,R^x,f,\psi^f)$ to $(y,b^y,R^y,g,\psi^g)$, consists of a 2-morphism $\xi \colon h \to k$ in $\Z_1(\mathfrak{C})$ subject to the coherence equation \eqref{eqn:commuting_object_2morphism} as in Definition \ref{def:commuting_objects_morphisms}.
\end{definition}

\begin{definition}\label{def:full_center}
The \emph{full center} of an algebra $A$ is the terminal object in the 2-category of commuting half-braidings of $A$, if it exists. We denote it by $\mathbf{Z}_1(A)$.
\end{definition}

Suppose the forgetful 2-functor $\mathbf{U} \colon \Z_1(\mathfrak{C})\to \mathfrak{C}$ from Lemma \ref{lem:drinfeld_centralizer_forgetful_2functor} has a right adjoint $\mathbf{U}^R \colon \mathfrak{C} \to \Z_1(\mathfrak{C})$. By Construction \ref{cstr:lax_monoidal_structure_on_the_right_adjoint}, $\mathbf{U}^R$ is equipped with a canonical lax monoidal structure. For an algebra $A$ in $\mathfrak{C}$, $\mathbf{U}^R(A)$ is an algebra in $\Z_1(\mathfrak{C})$.

Meanwhile, by Remark \ref{rmk:central_action_of_drinfeld_center_on_morita_dual}, we have a central $\Z_1(\mathfrak{C})$-action on $\Bimod_\mathfrak{C}(A,A)$, which induces a forgetful 2-functor $\mathbf{V} \colon \Z_1(\mathfrak{C}) \to \Bimod_\mathfrak{C}(A,A)$. Suppose $\mathbf{V}$ has a right adjoint $\mathbf{V}^R \colon \Bimod_\mathfrak{C}(A,A) \to \Z_1(\mathfrak{C})$. By Construction \ref{cstr:lax_monoidal_structure_on_the_right_adjoint}, $\mathbf{V}^R$ is equipped with a canonical lax monoidal structure. For the algebra $A$ in $\Bimod_\mathfrak{C}(A,A)$, $\mathbf{V}^R(A)$ is also an algebra in $\Z_1(\mathfrak{C})$.

\begin{theorem}\label{prop:full-center}
For any algebra $A$ in $\mathfrak{C}$, its full center has two equivalent characterizations:
\begin{enumerate}[nosep,label=(\roman*)]
    \item $\mathbf{Z}_1(A) \simeq \mathbf{V}^R(A)$,
    
    \item When the component of counit $\epsilon_A \colon \mathbf{U} \mathbf{U}^R(A) \to A$ is epic, $\mathbf{Z}_1(A) \simeq \Zl(\mathbf{U}^R(A))$.
\end{enumerate}
\end{theorem}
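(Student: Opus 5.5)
The plan for (i) is to identify the 2-category of commuting half-braidings of $A$ (Definitions \ref{def:commuting_half-braidings} and \ref{def:commuting_half-braidings_morphisms}) with the 2-category of pairs $(z,g)$, where $z\in\Z_1(\mathfrak{C})$ and $g\colon \mathbf{V}(z)\to A$ is an $(A,A)$-bimodule 1-morphism. A terminal object of the latter is, by definition, a representing object for $z\mapsto \Hom_{\Bimod_\mathfrak{C}(A,A)}(\mathbf{V}(z),A)$. By the 2-adjunction $\mathbf{V}\dashv\mathbf{V}^R$ together with Lemma \ref{lem:2-adjunction_is_universal}, this representing object is $\mathbf{V}^R(A)$, with the counit $\epsilon_A$ as universal arrow. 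Hence $\mathbf{Z}_1(A)\simeq\mathbf{V}^R(A)$.

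The identification itself is modelled on the proof of Theorem \ref{thm:left_center_as_internal_hom}. Recall from Remark \ref{rmk:central_action_of_drinfeld_center_on_morita_dual} that $\mathbf{V}(z)=z\boxt A$, with right action $1_z\boxt m$ and left action \eqref{eqn:left_action_on_image_of_half-braiding}.
\begin{itemize}
\item Given a commuting half-braiding $(z,f,\psi^f)$, set $g:=m\circ(f\boxt 1_A)$. The right module structure on $g$ comes from the associator $\alpha$. The left module structure comes from $\psi^f$ together with $\alpha$, after inverting the adjoint $(b^z_A)^\bullet$ that appears in the left action. The compatibility \eqref{eqn:Bimod1MorphismCoh} then reduces to \eqref{eqn:commuting_object_associativity}.
\item Conversely, given $g$, set $f:=g\circ(1_z\boxt i)$. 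The right structure $\psi^r$ gives $g\simeq m\circ(f\boxt 1)$. Combining this with the left structure $\psi^l$ and the unitors gives $\psi^f$; its coherences follow from the module coherences and the unitality triangle.
\item Finally, check that 1- and 2-morphisms correspond via \eqref{eqn:commuting_object_1morphism}, \eqref{eqn:commuting_object_2morphism} and the data $b^h$. This yields an equivalence of 2-categories, which preserves terminal objects.
\end{itemize}
I would also note that the algebra structure obtained on $\mathbf{Z}_1(A)$ agrees with the lax monoidal one on $\mathbf{V}^R(A)$ from Construction \ref{cstr:lax_monoidal_structure_on_the_right_adjoint}. This holds because both are induced by universality from $m$.

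For (ii), I work in the braided monoidal 2-category $\Z_1(\mathfrak{C})$ (Lemma \ref{lem:drinfeld_center_is_braided}), where $\mathbf{U}^R(A)$ is an algebra via the lax structure. A left commuting object over $\mathbf{U}^R(A)$ is a triple $(z,h,\psi^h)$ with $h\colon z\to\mathbf{U}^R(A)$ in $\Z_1(\mathfrak{C})$. The adjunction $\mathbf{U}\dashv\mathbf{U}^R$ sends $h$ to its transpose $f=\epsilon_A\circ\mathbf{U}(h)\colon\mathbf{U}(z)\to A$. The braiding $b_{z,\mathbf{U}^R A}=b^z_{\mathbf{U}\mathbf{U}^R A}$ becomes, after composing with $\epsilon_A$ and using naturality of $b^z$ in $\epsilon_A$, the half-braiding $b^z_A$. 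So applying $\mathbf{U}$ to $\psi^h$ and whiskering with $\epsilon_A$ and with the counit monoidal structure \eqref{eqn:counit_monoidal_structure} yields $\psi^f$. This defines a 2-functor $\Comm(\mathbf{U}^R(A))\to\{\text{commuting half-braidings of }A\}$. I then aim to show it is an equivalence, so that terminal objects agree: $\Zl(\mathbf{U}^R(A))\simeq\mathbf{Z}_1(A)$.

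The main obstacle is the inverse direction, which must descend $\psi^f$, a 2-cell between maps into $A$, to a 2-cell $\psi^h$ between maps $z\boxt\mathbf{U}^R(A)\to\mathbf{U}^R(A)$. For this I will transpose once more across the adjunction: 2-cells into $\mathbf{U}^R(A)$ correspond to 2-cells into $A$ of the form $\epsilon_A\circ\mathbf{U}(-)$. However, the source involves $\mathbf{U}\mathbf{U}^R(A)$ rather than $A$, so $\psi^f$ only provides a 2-cell after precomposition with $1\boxt\epsilon_A$. Here the hypothesis that $\epsilon_A$ is epic is used: I would take epic to mean that precomposition with $\epsilon_A$ (and with $1\boxt\epsilon_A$) is fully faithful on hom-categories. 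Then the required 2-cell and its coherences \eqref{eqn:commuting_object_associativity} and \eqref{eqn:commuting_object_unitality} can be checked after precomposing with $\epsilon_A$, where they reduce to those of $\psi^f$. The same argument gives full faithfulness on 1- and 2-morphisms, which completes the equivalence.
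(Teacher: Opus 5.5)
Your part (i) follows the paper's argument: the same bimodule 1-morphism $m\circ(f\boxt 1_A)$, the same inverse $g\circ(1\boxt i)$, and the same representability/terminal-object step. Part (ii), however, has its two directions reversed. As a result, the epic hypothesis is used where it is not needed and is missing where it is. If you apply $\mathbf{U}$ to $\psi^h$, postcompose with $\epsilon_A$, and use the counit monoidal structure and naturality of $b^z$ in $\epsilon_A$, you do not get $\psi^f$. You get a 2-isomorphism
\[
m\circ(f\boxt 1_A)\circ(1_z\boxt\epsilon_A)\;\Rightarrow\; m\circ(1_A\boxt f)\circ b^z_A\circ(1_z\boxt\epsilon_A)
\]
between 1-morphisms $z\boxt\mathbf{U}\mathbf{U}^R(A)\to A$. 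Naturality of $b^z$ only moves $\epsilon_A$ to the source side; it does not eliminate it. There is also no canonical 1-morphism $A\to\mathbf{U}\mathbf{U}^R(A)$ to precompose with. So the 2-functor $\Comm(\mathbf{U}^R(A))\to\{\text{commuting half-braidings of }A\}$ is not defined by your recipe. Producing $\psi^f$ on $z\boxt A$ requires descending along $1_z\boxt\epsilon_A$, and that is exactly where the paper invokes the epic hypothesis.

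Conversely, the direction you call the main obstacle is automatic. Under the adjunction, $\psi^h$ corresponds to a 2-cell between the transposes, and these are 1-morphisms out of $z\boxt\mathbf{U}\mathbf{U}^R(A)$. So the whiskered 2-cell $\psi^f\cdot(1_z\boxt\epsilon_A)$ is precisely what is required, and the coherences of $\psi^h$ follow from those of $\psi^f$ by whiskering. This is why the paper obtains $\mathbf{Z}_1(A)\to\Zl(\mathbf{U}^R(A))$ unconditionally and uses the hypothesis only for the map back.

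Your reading of ``epic'' (precomposition is fully faithful) does suffice to repair the argument once the roles are swapped. Fullness produces the descended $\psi^f$, which is invertible because fully faithful functors reflect isomorphisms. Faithfulness gives the coherence equations needed to lift objects and 1-morphisms back from $\Comm(\mathbf{U}^R(A))$. Note, though, that the associativity coherence \eqref{eqn:commuting_object_associativity} lives on $z\boxt A\boxt A$. So you also need faithfulness of precomposition with $1_z\boxt\epsilon_A\boxt\epsilon_A$, not just with $\epsilon_A$ and $1_z\boxt\epsilon_A$.
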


\begin{proof}
For any half-braiding $X = (x,b^x,R^x)$ in $\mathfrak{C}$, the 2-adjunction between $\mathbf{V}$ and $\mathbf{V}^R$ gives a natural equivalence of Hom categories
\begin{equation}
    \mathbf{Hom}_{\Bimod_\mathfrak{C}(A,A)}(\mathbf{V}(X), A) \simeq \mathbf{Hom}_{\Z_1(\mathfrak{C})}(X,\mathbf{V}^R(A)).
\end{equation}
By Remark \ref{rmk:central_action_of_drinfeld_center_on_morita_dual}, $\mathbf{V}(X) \simeq x \boxt A$ with the left $A$-action given by \eqref{eqn:left_action_on_image_of_half-braiding}, and the right $A$-action given by \eqref{eqn:right_action_on_image_of_half-braiding}. We can verify that the left-hand side is equivalent to the category of data promoting the half-braiding $X$ to a commuting half-braiding of $A$:
\begin{itemize}
    \item For a commuting half-braiding $(X,f,\psi^f)$, consider the 1-morphism $x \boxt A \xrightarrow{f \boxt 1_A} A \boxt A \xrightarrow{m} A$ with the left $A$-module structure given by
    \begin{equation}
        \begin{array}{@{}c@{\quad}c@{}}
        \stringdiagramfigure{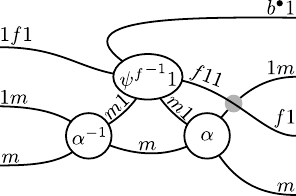} & ,
        \end{array}
    \end{equation}
    and the right $A$-module structure given by
    \begin{equation}
        \begin{array}{@{}c@{\quad}c@{}}
        \stringdiagramfigure{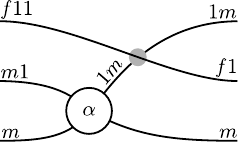} & .
        \end{array}
    \end{equation}

    \item Conversely, for an $(A,A)$-module 1-morphism $g \colon x \boxt A \to A$, we can construct a 1-morphism $x \xrightarrow{1_x \boxt i} x \boxt A \xrightarrow{g} A$ with a 2-isomorphism $\psi^g$ given by
    \begin{equation}
        \begin{array}{@{}c@{\quad}c@{}}
        \stringdiagramfigure{figures/string-diagrams/full_center/half-braiding_from_bimodule_morphism.pdf} & ,
        \end{array}
    \end{equation}
    where $\psi^l$ is the left $A$-module structure of $g$, and $\psi^r$ is the right $A$-module structure of $g$.
\end{itemize}

The 2-functor
\begin{equation}
    \mathbf{Hom}_{\Bimod_\mathfrak{C}(A,A)}(\mathbf{V}(-), A) \colon \Z_1(\mathfrak{C})^{1op} \to \mathbf{Cat}
\end{equation}
is representable by $\mathbf{V}^R(A)$; meanwhile, this 2-functor is representable if and only if the full center $\mathbf{Z}_1(A)$ exists as the terminal object in the Grothendieck construction, i.e., the unstraightened 2-category of commuting half-braidings of $A$. Hence, we have $\mathbf{Z}_1(A) \simeq \mathbf{V}^R(A)$.

To prove the second statement, consider the 2-adjunction between $\mathbf{U}$ and $\mathbf{U}^R$:
\begin{equation}
    \mathbf{Hom}_\mathfrak{C}(\mathbf{U}(X), A) \simeq \mathbf{Hom}_{\Z_1(\mathfrak{C})}(X,\mathbf{U}^R(A)).
\end{equation}
Let us denote the unit by $\eta \colon \mathrm{Id}_{\Z_1(\mathfrak{C})} \to \mathbf{U}^R \mathbf{U}$ and the counit by $\epsilon \colon \mathbf{U} \mathbf{U}^R \to \mathrm{Id}_\mathfrak{C}$. We would like to compare commuting half-braidings of $A$ with left commuting objects over $\mathbf{U}^R(A)$ internal to $\Z_1(\mathfrak{C})$. Notice that the underlying 1-morphisms are exactly mates under this 2-adjunction. So we only need to compare the 2-isomorphisms associated.

Given a commuting half-braiding $(X,f,\psi^f)$, we can always put a left commuting object structure on its mate:
\begin{itemize}
    \item The underlying 1-morphism is induced by
    \begin{equation}
        X \xrightarrow{\eta_{X}} \mathbf{U}^R \mathbf{U}(X) \xrightarrow{\mathbf{U}^R(f)} \mathbf{U}^R(A),
    \end{equation}
    
    \item The 2-isomorphism is given by
    \begin{equation}
        \begin{array}{@{}c@{\quad}c@{}}
        \stringdiagramfigure{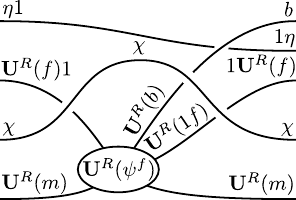} & ,
        \end{array}
    \end{equation}
    where $\chi$ is the lax monoidal structure of $\mathbf{U}^R$ constructed in Construction \ref{cstr:lax_monoidal_structure_on_the_right_adjoint}.
\end{itemize}

In particular, the universal property induces a braided algebra 1-morphism $\mathbf{Z}_1(A) \to \Zl(\mathbf{U}^R(A))$ in $\Z_1(\mathfrak{C})$.

Conversely, given a left commuting object $(X,g,\psi^g)$ over $\mathbf{U}^R(A)$, consider its mate under the 2-adjunction:
\begin{itemize}
    \item The underlying 1-morphism is induced by
    \begin{equation}
        \mathbf{U}(X) \xrightarrow{\mathbf{U}(g)} \mathbf{U} \mathbf{U}^R(A) \xrightarrow{\epsilon_A} A,
    \end{equation}

    \item However, the 2-isomorphism $\psi^g$ is directly transferred to 
    \begin{equation}
        \begin{array}{@{}c@{\quad}c@{}}
        \stringdiagramfigure{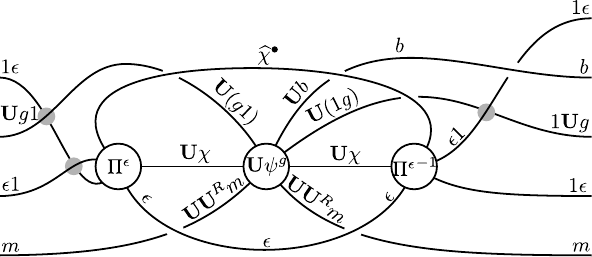} & ,
        \end{array}
    \end{equation}
    where $\widehat{\chi}^\bullet$ is the adjoint inverse of the oplax monoidal structure of $\mathbf{U}$, $\chi$ is the lax monoidal structure of $\mathbf{U}^R$ constructed by \eqref{eqn:lax_monoidal_structure_on_right_adjoint}, $\Pi^\epsilon$ is the compatibility data between monoidal structures and the counit constructed by \eqref{eqn:counit_monoidal_structure}, see Construction \ref{cstr:lax_monoidal_structure_on_the_right_adjoint}. To construct the 2-isomorphism of the commuting half-braiding, we need to descend through the 1-morphism $1_{\mathbf{U}(X)} \boxt \epsilon_A$. This is exactly the place where we need the assumption that $\epsilon_A$ is epic, so that we can use the right cancellation property here.
\end{itemize}
This induces another braided algebra 1-morphism $\Zl(\mathbf{U}^R(A)) \to \mathbf{Z}_1(A)$ in $\Z_1(\mathfrak{C})$. It can be verified that these two braided algebra 1-morphisms are mutually inverse, hence we have $\mathbf{Z}_1(A) \simeq \Zl(\mathbf{U}^R(A))$.
\end{proof}

\begin{corollary}\label{cor:full_center_is_morita_invariant}
The full center of an algebra $A$ in $\mathfrak{C}$ is Morita invariant: for any algebra $B$ in $\mathfrak{C}$ such that $\Mod_\mathfrak{C}(A) \simeq \Mod_\mathfrak{C}(B)$ as left $\mathfrak{C}$-module 2-categories, we have $\mathbf{Z}_1(A) \simeq \mathbf{Z}_1(B)$ as braided algebras in $\Z_1(\mathfrak{C})$.
\end{corollary}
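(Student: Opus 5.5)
We will deduce the corollary from characterization (i) of Theorem \ref{prop:full-center}, $\mathbf{Z}_1(A)\simeq \mathbf{V}^R(A)$, by showing that every ingredient of $\mathbf{V}^R(A)$ depends only on the left $\mathfrak{C}$-module 2-category $\mathfrak{M}:=\Mod_\mathfrak{C}(A)$.

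\textbf{Step 1 (transport the target).} Fix an equivalence $\Phi\colon \Mod_\mathfrak{C}(A)\simeq\Mod_\mathfrak{C}(B)$ of left $\mathfrak{C}$-module 2-categories. Conjugation by $\Phi$ is a monoidal equivalence
\[\mathbf{End}_\mathfrak{C}(\Mod_\mathfrak{C}(A))^{1mp}\simeq \mathbf{End}_\mathfrak{C}(\Mod_\mathfrak{C}(B))^{1mp}.\]
Under the identifications of Remark \ref{rmk:central_action_of_drinfeld_center_on_morita_dual}, this yields a monoidal equivalence $\Theta\colon \Bimod_\mathfrak{C}(A,A)\simeq\Bimod_\mathfrak{C}(B,B)$. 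Concretely, $\Theta=N\boxtsub{A}(-)\boxtsub{A}M$ for the invertible bimodules ${}_BN_A$ and ${}_AM_B$ representing $\Phi$ and its inverse. Since $\Theta$ is strong monoidal, it sends the monoidal unit $A$ to the unit $B$. It also sends the algebra $A$, which is the trivial algebra on the unit, to the algebra $B$.

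\textbf{Step 2 (compatibility of the central actions).} We must check that $\Theta\circ\mathbf{V}_A\simeq \mathbf{V}_B$ as monoidal 2-functors $\Z_1(\mathfrak{C})\to\Bimod_\mathfrak{C}(B,B)$. Both central functors arise by the same recipe from the canonical equivalence $\Z_1(\mathfrak{C})\simeq\Z_1(\mathbf{End}_\mathfrak{C}(\mathfrak{M})^{1mp})$, which sends a half-braiding $x$ to the $\mathfrak{C}$-module endofunctor $x\boxt-$ with the module structure provided by $b^x$. Conjugating $x\boxt -$ by the $\mathfrak{C}$-module equivalence $\Phi$ returns $x\boxt-$ up to the canonical 2-natural equivalence given by the module structure of $\Phi$. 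This gives the required monoidal 2-natural equivalence.

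At the level of bimodules the same point reads
\[N\boxtsub{A}(x\boxt A)\boxtsub{A}M\simeq x\boxt B.\]
Here the half-braiding $b^x$ is used to move $x$ past $N$, and the twisted left action \eqref{eqn:left_action_on_image_of_half-braiding} matches on both sides.

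\textbf{Step 3 (conclude via adjoints).} Given Step 2, the right adjoints satisfy $\mathbf{V}_B^R\simeq \mathbf{V}_A^R\circ\Theta^{-1}$. This follows from uniqueness of adjoints (Lemma \ref{lem:2-adjunction_is_universal}): $\mathbf{V}_A^R\circ\Theta^{-1}$ is right adjoint to $\Theta\circ\mathbf{V}_A\simeq\mathbf{V}_B$. By Corollary \ref{cor:monoidal_2-adjunction} and the canonicity of Construction \ref{cstr:lax_monoidal_structure_on_the_right_adjoint}, this is an equivalence of lax monoidal 2-functors. Lax monoidal 2-functors carry algebras to algebras, so
\[\mathbf{V}_B^R(B)\simeq\mathbf{V}_A^R(\Theta^{-1}B)\simeq \mathbf{V}_A^R(A)\]
as algebras in $\Z_1(\mathfrak{C})$. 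Theorem \ref{prop:full-center}(i) then gives $\mathbf{Z}_1(A)\simeq\mathbf{Z}_1(B)$ as algebras.

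\textbf{Step 4 (braided structure).} The braided structure on $\mathbf{Z}_1(A)$ comes from the terminal property among commuting half-braidings. Transport that property along the equivalence of Step 3: the representing 2-functors $\mathbf{Hom}(\mathbf{V}_A(-),A)\simeq\mathbf{Hom}(\mathbf{V}_B(-),B)$ are identified compatibly with the monoidal structures. Equivalently, check \eqref{eqn:BraidedAlgebra1Morphism} by observing that both braidings are induced from the central structure that $\Theta$ preserves.

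\textbf{Main obstacle.} I expect Step 2 to be the hardest part: producing the \emph{monoidal} 2-natural equivalence $\Theta\mathbf{V}_A\simeq\mathbf{V}_B$ with all its coherence data. I would argue abstractly at the level of $\mathbf{End}_\mathfrak{C}(\mathfrak{M})$, where the construction is manifestly functorial in $\mathfrak{M}$, rather than by string-diagram computation with relative tensor products.
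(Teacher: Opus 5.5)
Your proposal is correct and follows the argument the paper intends. The paper states the corollary without a written proof, as a consequence of Theorem \ref{prop:full-center}(i) together with Remark \ref{rmk:central_action_of_drinfeld_center_on_morita_dual}: the central action $\mathbf{V}$ and the unit of $\Bimod_\mathfrak{C}(A,A)\simeq\mathbf{End}_\mathfrak{C}(\Mod_\mathfrak{C}(A))^{1mp}$ depend only on the left $\mathfrak{C}$-module 2-category $\Mod_\mathfrak{C}(A)$, mirroring how Corollary \ref{cor:left_center_is_Morita_invariant} follows from Theorem \ref{thm:left_center_as_internal_hom}, and your Steps 1--3 spell this out. One point to tighten is Step 4: compatibility with the monoidal structures only transports the algebra structure, so for the braiding you also need the equivalence $\Theta\circ\mathbf{V}_A\simeq\mathbf{V}_B$ of Step 2 to respect the central (half-braiding) structures, which is immediate in the $\mathbf{End}_\mathfrak{C}(\mathfrak{M})$ picture because those half-braidings are the $\mathfrak{C}$-module structures of endofunctors and conjugation by $\Phi$ transports them.
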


\begin{remark}
    Similarly, when $\epsilon_A$ is epic, we have $\mathbf{Z}_1(A) \simeq \Zr(\mathbf{U}^R(A))$.
\end{remark}

\section{Examples and Applications}

\subsection{Drinfeld center of monoidal categories}
When $\mathfrak{B} = \mathbf{2Vect}$, algebras in $\mathfrak{B}$ are finite semisimple monoidal categories, and modules in $\mathfrak{B}$ are finite semisimple module categories. The left, right, and full centers of monoidal categories coincide, all equivalent to the Drinfeld center of monoidal categories \cite{Drinfeld,Majid,JS91}. The Morita invariance of the Drinfeld center is a well-known result in the theory of fusion categories, see \cite{ENO05}.

\subsection{Crossed braided center of graded monoidal categories}
Next, let us consider the case $\mathfrak{C} = \mathbf{2Vect}_G$ is the 2-category of $G$-graded finite semisimple categories, for a finite group $G$.

\begin{definition}\label{def:graded_monoidal_categories}
    A finite semisimple $G$-\emph{graded monoidal category} consists of:
    \begin{itemize}[noitemsep]
        \item A finite semisimple monoidal category $\mathcal{A}$, together with a decomposition $\mathcal{A} = \bigoplus_{g \in G} \mathcal{A}_g$, such that the monoidal product $\otimes$ restricts to a functor $\mathcal{A}_g \boxtimes \mathcal{A}_h \to \mathcal{A}_{gh}$ for all $g,h \in G$;
        
        \item A unit object $\mathbf{I} \in \mathcal{A}_e$, where $e \in G$ is the identity element.
    \end{itemize}
\end{definition}

\begin{definition}\label{def:graded_module_categories}
    A finite semisimple $G$-\emph{graded left module category} over a $G$-graded monoidal category $\mathcal{A}$ consists of:
    \begin{itemize}[nosep]
        \item A finite semisimple category $\mathcal{M}$, together with a decomposition $\mathcal{M} = \bigoplus_{g \in G} \mathcal{M}_g$;
        
        \item A left $\mathcal{A}$-action $\triangleright: \mathcal{A} \boxtimes \mathcal{M} \to \mathcal{M}$, which restricts to a functor $\mathcal{A}_g \boxtimes \mathcal{M}_h \to \mathcal{M}_{gh}$ for all $g,h \in G$.
    \end{itemize}
\end{definition}

\begin{lemma}
    Algebras in $\mathbf{2Vect}_G$ are finite semisimple $G$-graded monoidal categories, and modules in $\mathbf{2Vect}_G$ are finite semisimple $G$-graded module categories.
\end{lemma}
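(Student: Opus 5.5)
The plan is to unwind the definitions of an algebra and a module (Definitions in §\ref{subsec:algebras} and §\ref{subsec:modules}) in the concrete model of $\mathbf{2Vect}_G$, and match them term by term with Definitions \ref{def:graded_monoidal_categories} and \ref{def:graded_module_categories}. First I fix the model.
\begin{itemize}
\item Objects of $\mathbf{2Vect}_G$ are finite semisimple categories $\mathcal{A}=\bigoplus_{g\in G}\mathcal{A}_g$.
\item 1-morphisms are grading-preserving linear functors, and 2-morphisms are natural transformations.
\item The monoidal product is the convolution $(\mathcal{A}\boxt\mathcal{B})_g=\bigoplus_{hk=g}\mathcal{A}_h\boxtimes\mathcal{B}_k$, built on the Deligne tensor product.
\item The unit $\I$ is $\mathbf{Vect}$ concentrated in degree $e$.
\end{itemize}
Since $\mathbf{2Vect}$ is not strict, I invoke Theorem \ref{thm:coherence_of_tricategories} to pass to a semistrict model. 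The definitions are invariant under monoidal equivalence, so I may compute algebras there and transport back.

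Next I translate the structure maps of an algebra $(A,m,i,\alpha,\lambda,\rho)$.
\begin{itemize}
\item By the universal property of the Deligne product, a grading-preserving linear functor $m\colon A\boxt A\to A$ is the same as a bilinear functor $\otimes\colon\mathcal{A}\times\mathcal{A}\to\mathcal{A}$ that sends $\mathcal{A}_h\times\mathcal{A}_k$ into $\mathcal{A}_{hk}$. This is exactly the grading condition on $\otimes$.
\item A grading-preserving functor $i\colon\I\to A$ is determined by the image of $\mathbb{C}$, which must be an object $\mathbf{I}\in\mathcal{A}_e$.
\item The 2-isomorphisms $\alpha,\lambda,\rho$ are natural isomorphisms $(X\otimes Y)\otimes Z\cong X\otimes(Y\otimes Z)$, $\mathbf{I}\otimes X\cong X$ and $X\otimes\mathbf{I}\cong X$.
\item The equations \eqref{eqn:AlgebraAssociativity} and \eqref{eqn:AlgebraUnitality} evaluated on objects are Mac Lane's pentagon and triangle.
\end{itemize}
Conversely, a graded monoidal category yields these data by the same dictionary. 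Homogeneity of the unit and grading-compatibility of $\otimes$ are exactly what make $i$ and $m$ into 1-morphisms of $\mathbf{2Vect}_G$.

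The module statement follows the same pattern. A left $A$-module $(M,l^M,\mu^M,\lambda^M)$ has an action $l^M\colon A\boxt M\to M$ that is grading-preserving, i.e.\ a bilinear action $\triangleright$ with $\mathcal{A}_g\times\mathcal{M}_h\to\mathcal{M}_{gh}$. The isomorphisms $\mu^M,\lambda^M$ are the module associativity and unit constraints, and \eqref{eqn:LeftModAssociativity}, \eqref{eqn:LeftModUnitality} are the module pentagon and triangle. Right modules are handled symmetrically. I would also remark that algebra and module 1- and 2-morphisms correspond to graded monoidal and module functors and their natural transformations, so the correspondence is an equivalence of 2-categories.

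I expect the main obstacle to be bookkeeping for coherence rather than any conceptual step. I must check that the interchangers $\pmb{\phi}$ and the Deligne-product associativity in the semistrict replacement, evaluated on objects, contribute only canonical isomorphisms. Once that holds, the string-diagram equations reduce literally to the classical pentagon and triangle. To handle this I would first do the comparison in the strict skeletal model of $\mathbf{2Vect}_G$. In the untwisted case the associator of $\mathbf{2Vect}_G$ is trivial on gradings, so no twist enters; a nontrivial twist appears only in the $\pi$-twisted variant, which this lemma does not cover.
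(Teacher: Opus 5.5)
The paper states this lemma without proof, treating it as a direct unwinding of definitions, and your proposal carries out exactly that unwinding correctly. Grading-preserving functors out of the convolution product encode graded bilinear tensor products and actions, the unit lands in $\mathcal{A}_e$, and the string-diagram equations become the classical pentagon and triangle. The detour through a ``strict skeletal model'' is unnecessary: coherence for the monoidal bicategory $\mathbf{2Vect}_G$ already guarantees that its interchangers and Deligne associators contribute only canonical isomorphisms that cancel, which is all the bookkeeping you flagged requires.
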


\begin{definition}\label{def:crossed_braided_categories}
    A finite semisimple $G$-\emph{crossed braided monoidal category} consists of:
    \begin{itemize}[nosep]
        \item A finite semisimple $G$-graded monoidal category $\mathcal{A} = \bigoplus_{g \in G} \mathcal{A}_g$;
        
        \item A $G$-action on $\mathcal{A}$ by monoidal autoequivalences, such that $g \triangleright \mathcal{A}_h \subseteq \mathcal{A}_{ghg^{-1}}$ for all $g,h \in G$;
        
        \item A natural isomorphism, called the \emph{crossed braiding}
        \begin{equation}
            \beta_{X,Y}: X \otimes Y \xrightarrow{\sim} g(Y) \otimes X,
        \end{equation}
        for all $X \in \mathcal{A}_g$, $Y \in \mathcal{A}$, satisfying additional coherence conditions (see \cite{GNN09,TV13} for details).
    \end{itemize}
\end{definition}

\begin{definition}\label{def:crossed_braided_center}
    The \emph{crossed braided center} of a $G$-graded monoidal category $\mathcal{A}$ is defined to be
    \begin{equation}
        \mathcal{Z}_G(\mathcal{A}) := \mathbf{Fun}_{\mathcal{A}_e \boxtimes \mathcal{A}^{\mathrm{mop}}}(\mathcal{A}, \mathcal{A}).
    \end{equation}

    More explicitly, 
    \begin{enumerate}[nosep,label=(\arabic*)]
        \item An object is a pair $(X, \beta_{X,-})$, where $X \in \mathcal{A}$ and $\beta_{X,-}$ is a natural isomorphism satisfying half of the crossed braiding conditions.
        
        \item A morphism between two objects $(X, \beta_{X,-})$ and $(Y, \beta_{Y,-})$ is a morphism $f: X \to Y$ in $\mathcal{A}$ intertwining the half-braidings.
    \end{enumerate}
\end{definition}

\begin{proposition}\label{prop:crossed_braided_center_is_full_center}
    The crossed braided center $\mathcal{Z}_G(\mathcal{A})$ is a $G$-crossed braided monoidal category, and it is equivalent to the full center of $\mathcal{A}$ as an algebra in $\mathbf{2Vect}_G$.
\end{proposition}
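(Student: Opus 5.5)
The plan is to use the first characterization of Theorem \ref{prop:full-center}, $\mathbf{Z}_1(A)\simeq \mathbf{V}^R(A)$, and to compute both sides explicitly for $\mathfrak{C}=\mathbf{2Vect}_G$. First I will identify $\Z_1(\mathbf{2Vect}_G)$ concretely. Via Remark \ref{rmk:Z_1(C)=ΩEnd(∑C)} it is $\mathbf{Fun}_{\mathbf{2Vect}_G\boxtimes\mathbf{2Vect}_G^{1mp}}(\mathbf{2Vect}_G,\mathbf{2Vect}_G)$. Its objects are $G$-graded finite semisimple categories $\mathcal{X}=\bigoplus_g\mathcal{X}_g$ whose half-braiding with $\mathrm{Vect}_h$ amounts to equivalences $\mathcal{X}_g\simeq\mathcal{X}_{hgh^{-1}}$. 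These assemble into a $G$-action covering conjugation, together with coherence data from $R^x$. In other words, $\Z_1(\mathbf{2Vect}_G)$ is the 2-category of $G$-crossed (conjugation-equivariant, graded) finite semisimple categories, following \cite{KTZ20}.

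Next I will describe the 2-functor $\mathbf{V}\colon\Z_1(\mathfrak{C})\to\Bimod_{\mathfrak{C}}(\mathcal{A},\mathcal{A})$ from Remark \ref{rmk:central_action_of_drinfeld_center_on_morita_dual}. It sends $\mathcal{X}$ to $\mathcal{X}\boxtimes\mathcal{A}$ with the grading given by the product. The right action is by $\mathcal{A}$. The left action is twisted by $(b^{\mathcal{X}}_{\mathcal{A}})^\bullet$, which moves $\mathcal{A}_h$ past $\mathcal{X}_g$ using the $G$-action. I will then compute the right adjoint $\mathbf{V}^R(\mathcal{A})$ by the Yoneda argument already used in the proof of Theorem \ref{prop:full-center}. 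A graded bimodule functor $\mathcal{X}\boxtimes\mathcal{A}\to\mathcal{A}$ is determined by its restriction $F\colon\mathcal{X}\to\mathcal{A}$ along $1\boxtimes\mathbf{I}$. The left $\mathcal{A}$-linearity then becomes natural isomorphisms $F(x)\otimes a\simeq g(a)\otimes F(x)$ for $x\in\mathcal{X}_g$, compatible with $\otimes$ and with the $G$-action. This is precisely the commuting half-braiding data of Definition \ref{def:commuting_half-braidings}, specialized to this example.

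I then construct the terminal object directly. Take $\mathcal{Z}_G(\mathcal{A})$, with objects $(X,\beta_{X,-})$ for $X\in\mathcal{A}_g$ and $\beta_{X,Y}\colon X\otimes Y\simeq g(Y)\otimes X$, where the $G$-action on the pairs is conjugation of half-braidings. Equip it with the forgetful functor $\zeta\colon\mathcal{Z}_G(\mathcal{A})\to\mathcal{A}$ and with $\psi^\zeta$ given by $\beta$. To check terminality, start from a commuting half-braiding $(\mathcal{X},F,\psi^F)$. It factors uniquely up to unique 2-isomorphism as $x\mapsto(F(x),\psi^F_{x,-})$, and the coherence equations \eqref{eqn:commuting_object_associativity} and \eqref{eqn:commuting_object_unitality} translate exactly into the half-braiding axioms. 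The equivalence with the concrete description $\mathbf{Fun}_{\mathcal{A}_e\boxtimes\mathcal{A}^{\mathrm{mop}}}(\mathcal{A},\mathcal{A})$ is the standard one: a bimodule endofunctor is determined by its value on $\mathbf{I}$, and $\mathcal{A}_e$-linearity together with grading recovers the twisted half-braidings.

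Finally, the $G$-crossed braided structure follows from Proposition \ref{prop:left_center_braided}-style transport. Theorem \ref{prop:full-center} implies that $\mathbf{Z}_1(\mathcal{A})$ is a braided algebra in $\Z_1(\mathbf{2Vect}_G)$. A braided algebra in $G$-crossed categories is exactly a $G$-crossed braided monoidal category: the multiplication gives $\otimes$, the half-braiding gives the $G$-action, and $\beta$ gives the crossed braiding. I expect the main obstacle to be this dictionary, namely matching coherence data (the modifications $R^x$ and the hexagons \eqref{eqn:BraidedAlgebra1} and \eqref{eqn:BraidedAlgebra2}) with the axioms of \cite{GNN09,TV13}. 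I would handle it by working skeletally with graded components, so that every check reduces to comparing natural isomorphisms component by component.
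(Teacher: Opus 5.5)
The paper gives no proof of this proposition; it only defers to \cite{GNN09,TV13}. Your outline therefore has to stand on its own, and it breaks at the identification of the terminal object.

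The half-braiding of $\mathcal{X}\in\Z_1(\mathbf{2Vect}_G)$ with $\mathcal{A}$ moves $a\in\mathcal{A}_h$ past $x\in\mathcal{X}_l$ by acting on $x$, not on $a$. Indeed, the degree-$k$ components of $\mathcal{X}\boxtimes\mathbf{Vect}_h$ and $\mathbf{Vect}_h\boxtimes\mathcal{X}$ are $\mathcal{X}_{kh^{-1}}$ and $\mathcal{X}_{h^{-1}k}$, so $x\boxtimes a\mapsto a\boxtimes h^{-1}\!\cdot x$ (up to conventions). Hence $\psi^F$ consists of isomorphisms between $F(x)\otimes a$ and $a\otimes F(h^{-1}\!\cdot x)$ for $a\in\mathcal{A}_h$. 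Your formula $F(x)\otimes a\simeq g(a)\otimes F(x)$ does not make sense, because $\mathcal{A}$ is only graded and carries no $G$-action.

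The same problem affects your candidate terminal object. An object of $\mathcal{Z}_G(\mathcal{A})=\mathbf{Fun}_{\mathcal{A}_e\boxtimes\mathcal{A}^{\mathrm{mop}}}(\mathcal{A},\mathcal{A})$ is $Z\in\mathcal{A}$ with a half-braiding against $\mathcal{A}_e$ \emph{only}. Grading plus $\mathcal{A}_e$-linearity does not produce isomorphisms against $\mathcal{A}_h$ for $h\neq e$. So ``$x\mapsto(F(x),\psi^F_{x,-})$'' is not a well-defined factorization, and ``the $G$-action is conjugation of half-braidings'' defines nothing.

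The missing content is exactly what the proposition asserts, and it has two parts.
First, you must construct a $G$-action on $\mathcal{Z}_G(\mathcal{A})$ covering conjugation of degrees, as in \cite{GNN09}. Since each $\mathcal{A}_h$ is an invertible $\mathcal{A}_e$-bimodule category, every $(Z,\gamma)$ admits an essentially unique $h^{-1}\!\cdot Z$ with coherent isomorphisms $Z\otimes a\cong a\otimes h^{-1}\!\cdot Z$ for $a\in\mathcal{A}_h$, extending $\gamma$. These isomorphisms are what define $\psi^\zeta$.
Second, you must show that for an arbitrary commuting half-braiding $(\mathcal{X},F,\psi^F)$, the components of $\psi^F$ in degrees $h\neq e$ amount exactly to a $G$-equivariance structure on $x\mapsto(F(x),\psi^F_{x,-}|_{\mathcal{A}_e})$. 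That is what makes this assignment a 1-morphism in $\Z_1(\mathbf{2Vect}_G)$, unique up to unique 2-isomorphism.

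The first part needs the grading to be faithful. Neither you nor the statement invoke this hypothesis, and without it the claim is false. Take $\mathcal{A}=\mathbf{Vect}$ in degree $e$ with $G\neq 1$. Then $\mathbf{V}=\mathbf{U}$, so by Theorem~\ref{prop:full-center}(i) the full center is $\mathbf{U}^R(\mathbf{I})$. This is $\bigoplus_{g\in G}\mathbf{Vect}$ in degree $e$ with $G$ permuting the summands, since equivariant functors $\mathcal{X}_e\to\bigoplus_{g\in G}\mathbf{Vect}$ are the same as plain functors $\mathcal{X}_e\to\mathbf{Vect}$. Meanwhile $\mathcal{Z}_G(\mathcal{A})\simeq\mathbf{Vect}$, which has a different number of simples.

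Finally, Theorem~\ref{prop:full-center}(i) only yields an algebra structure. The braided structure you invoke comes from part (ii), whose hypothesis that $\epsilon_{\mathcal{A}}$ is epic you would still need to verify in $\mathbf{2Vect}_G$.
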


\begin{remark}
    The full centers of $G$-graded fusion categories have canonical $G$-crossed braided fusion structure as introduced in \cite{GNN09}; they are also generalized to general monoidal cases in \cite{TV13}. The Morita invariance of the $G$-crossed braided center appears in \cite{ENO10,GJS21}.
\end{remark}

\begin{remark}
    By contrast, the left and right centers of a $G$-crossed fusion category as algebras in $\Z_1(\mathbf{2Vect}_G)$ differ from the full center: unlike in the ungraded case of $\mathbf{2Vect}$, where the left, right, and full centers coincide, the grading breaks this coincidence. To the best of our knowledge, these one-sided centers of crossed monoidal categories have not been described explicitly in the literature. They are nonetheless quite accessible: Theorem \ref{prop:full-center} expresses the full center in terms of the left (or right) center of the right adjoint of the forgetful 2-functor. We leave their explicit structure in the graded setting as a natural direction for the interested reader.
\end{remark}

\begin{remark}
More generally, one can also twist the 2-associator of $\mathbf{2Vect}_G$ by a 4-cocycle $\pi \in \operatorname{Z}^4(\mathrm{B}G;\mathbb{C}^\times)$. Algebras in $\mathfrak{C} = \mathbf{2Vect}_G^\pi$ are finite semisimple $\pi$-twisted $G$-graded monoidal categories, and modules in $\mathfrak{C}$ are finite semisimple $\pi$-twisted $G$-graded module categories. The Drinfeld center $\Z_1(\mathbf{2Vect}_G^\pi)$ has been studied thoroughly in \cite{KTZ20}. The full centers of $\pi$-twisted $G$-graded fusion categories have canonical $\pi$-twisted $G$-crossed braided fusion structure. See the author's previous work \cite{Xu} for a detailed list of coherence data.
\end{remark}

\subsection{Centers of central module monoidal categories}
Let $\mathcal{B}$ be a braided fusion 1-category, and $\mathfrak{C} = \mathbf{Mod}(\mathcal{B})$ be the 2-category of finite semisimple right $\mathcal{B}$-module categories. 

\begin{definition}\label{def:central_module_category}
    A \emph{central $\mathcal{B}$-module monoidal category} consists of a monoidal category $\mathcal{C}$, together with a braided monoidal functor into its Drinfeld center: $\mathcal{B} \to \mathcal{Z}_1(\mathcal{C})$.
\end{definition}

\begin{definition}\label{def:left_module_category_over_central_module_category}
    A \emph{left module category over a central $\mathcal{B}$-module monoidal category} $\mathcal{C}$ is a left $\mathcal{C}$-module category $\mathcal{M}$, together with a compatible right $\mathcal{B}$-module structure on $\mathcal{M}$.
\end{definition}

\begin{lemma}
    Algebras in $\mathbf{Mod}(\mathcal{B})$ are finite semisimple central $\mathcal{B}$-module monoidal categories, and modules in $\mathbf{Mod}(\mathcal{B})$ are finite semisimple module categories over such algebras.
\end{lemma}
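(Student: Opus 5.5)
We have to prove the final lemma: algebras in $\mathbf{Mod}(\mathcal{B})$ are finite semisimple central $\mathcal{B}$-module monoidal categories, and modules in $\mathbf{Mod}(\mathcal{B})$ are finite semisimple module categories over such algebras. The plan is to unwind the monoidal structure of $\mathbf{Mod}(\mathcal{B})$ and use universal properties of the relative Deligne tensor product $\boxtimes_\mathcal{B}$ to translate each piece of data.

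\textbf{Setup.} The monoidal product on $\mathbf{Mod}(\mathcal{B})$ is $\boxtimes_\mathcal{B}$, with unit $\mathcal{B}$. A right $\mathcal{B}$-module category becomes a $\mathcal{B}$-bimodule by precomposing with the braiding, exactly as in Construction \ref{cstr:induction_2functor} with $\mathfrak{B}=\mathbf{2Vect}$ and $A=\mathcal{B}$. So $\mathbf{Mod}(\mathcal{B})$ is $\Mod_{\mathbf{2Vect}}(\mathcal{B})$ with the structure $\Box^+_\mathcal{B}$ of Notation \ref{not:induced_monoidal_product_on_right_modules}.

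\textbf{Algebras.} I would apply Proposition \ref{prop:algebras-in-modules} with $\mathfrak{B}=\mathbf{2Vect}$ and $A=\mathcal{B}$. It says an algebra in $\mathbf{Mod}(\mathcal{B})$ is the same as:
\begin{itemize}
\item an algebra $\mathcal{C}$ in $\mathbf{2Vect}$, i.e.\ a finite semisimple monoidal category, together with
\item a braided algebra 1-morphism $\mathcal{B}\to \Zl(\mathcal{C})$.
\end{itemize}
It then remains to identify $\Zl(\mathcal{C})$ in $\mathbf{2Vect}$ with the Drinfeld center $\mathcal{Z}_1(\mathcal{C})$, as braided monoidal categories.

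\textbf{Identifying $\Zl(\mathcal{C})$.} Since $\mathbf{2Vect}$ is symmetric with trivial $R,S$, a left commuting object over $\mathcal{C}$ is:
\begin{itemize}
\item a finite semisimple category $x$ with a linear functor $f\colon x\to\mathcal{C}$;
\item a natural isomorphism $f(a)\otimes c\cong c\otimes f(a)$, natural in $a\in x$ and $c\in\mathcal{C}$;
\item the hexagon in $c$ from \eqref{eqn:commuting_object_associativity} and unitality from \eqref{eqn:commuting_object_unitality}.
\end{itemize}
Such data is precisely a lift of $f$ through the forgetful functor $\mathcal{Z}_1(\mathcal{C})\to\mathcal{C}$, using the inverse half-braiding. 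Hence $\mathcal{Z}_1(\mathcal{C})$ is terminal in $\Comm(\mathcal{C})$. I would then check that the multiplication, unit and braiding produced in Proposition \ref{prop:left_center_braided} are the usual tensor product and braiding of $\mathcal{Z}_1(\mathcal{C})$; each is determined by its image in $\mathcal{C}$ together with the half-braiding, so this is a direct comparison. This also matches the first example of the section.

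\textbf{Modules.} I would apply Theorem \ref{thm:base-change-ordinary-modules}, giving
\[\Mod_{\mathbf{Mod}(\mathcal{B})}(\mathcal{C})\simeq \Mod_{\mathbf{2Vect}}(\mathcal{C}).\]
Here the $\mathcal{B}$-action on a $\mathcal{C}$-module is recovered by restriction along $\mathcal{B}\to\mathcal{Z}_1(\mathcal{C})\to\mathcal{C}$. Compatibility of this action with the $\mathcal{C}$-action comes from the central structure, which is exactly Definition \ref{def:left_module_category_over_central_module_category}, up to the left/right convention swap via $\mathcal{C}^{\mathrm{mop}}$.

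\textbf{Main obstacle.} The hard step is matching the braided structures: $\mathcal{B}\to\mathcal{Z}_1(\mathcal{C})$ must be braided for the same orientation of braiding. This depends on the choice of $\Box^+$ versus $\Box^-$, and on the fact that the half-braiding convention in Definition \ref{def:commuting_objects} points in the reverse direction. I would first fix that $\Box^+$ corresponds to the central structure with $\beta_{b,c}\colon F(b)\otimes c\to c\otimes F(b)$. If the orientations disagree, I would compose with the canonical equivalence $\mathcal{Z}_1(\mathcal{C})\simeq\mathcal{Z}_1(\mathcal{C})^{\mathrm{rev}}$ obtained by inverting half-braidings.
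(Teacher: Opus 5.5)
Your reduction is the one the paper implicitly relies on. The lemma has no written proof, but the surrounding text (the $\mathbf{2Vect}$ example, and the remark after Proposition \ref{prop:full_center_of_central_module_category}) points to the same three ingredients you use: Proposition \ref{prop:algebras-in-modules}, the identification $\Zl(\mathcal{C})\simeq\mathcal{Z}_1(\mathcal{C})$ in $\mathbf{2Vect}$, and Theorem \ref{thm:base-change-ordinary-modules}, with the central-functor dictionary taken from Davydov--Nikshych.

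The gap is in how you handle the step you yourself call the main obstacle. There is no canonical braided equivalence $\mathcal{Z}_1(\mathcal{C})\simeq\mathcal{Z}_1(\mathcal{C})^{\mathrm{rev}}$, and in general there is none at all. Inverting half-braidings is a braided equivalence between $\mathcal{Z}_1(\mathcal{C})^{\mathrm{rev}}$ and the center built from half-braidings of the other handedness, $-\otimes X\to X\otimes -$ (equivalently $\mathcal{Z}_1(\mathcal{C}^{\mathrm{mop}})$). It changes the model, not the braided equivalence class.

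The two classes really differ. For a nondegenerate quadratic form $q$ on $\mathbb{Z}/9$, the pointed category $\mathcal{C}(\mathbb{Z}/9,q)$ is a Drinfeld center: it has the Lagrangian subgroup $3\mathbb{Z}/9$, and it is $\mathcal{Z}_1(\mathbf{Vec}^{\omega}_{\mathbb{Z}/3})$ for a suitable $\omega\neq 1$. Yet it is not braided equivalent to its reverse $\mathcal{C}(\mathbb{Z}/9,q^{-1})$, because $-1$ is not a square modulo $9$.

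Now take $\mathcal{C}=\mathbf{Vec}^{\omega}_{\mathbb{Z}/3}$ and $\mathcal{B}=\mathcal{Z}_1(\mathcal{C})$. The identity is a braided functor $\mathcal{B}\to\mathcal{Z}_1(\mathcal{C})$. There is no braided functor $\mathcal{B}^{\mathrm{rev}}\to\mathcal{Z}_1(\mathcal{C})$: it would be fully faithful since $\mathcal{B}^{\mathrm{rev}}$ is nondegenerate, hence an equivalence by a dimension count. So ``central'' with respect to $\mathcal{Z}_1(\mathcal{C})$ and with respect to $\mathcal{Z}_1(\mathcal{C})^{\mathrm{rev}}$ are genuinely different structures. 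Your fallback is therefore unavailable. If the orientations disagree, the construction does not give the lemma in its stated form.

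What you need instead is the actual computation. Track three things:
\begin{enumerate}
\item how the braiding of $\mathcal{B}$ enters the bimodule structure in Construction \ref{cstr:induction_2functor}, through $\mu^M$ and $\alpha^M$ (for $\mathfrak{B}=\mathbf{2Vect}$ this is where $\Box^+$ and $\Box^-$ differ);
\item how it then enters the commuting-object structure $\psi^f$ on $f\colon\mathcal{B}\to\mathcal{C}$ in the proof of Proposition \ref{prop:algebras-in-modules};
\item whether the braiding of $\Zl(\mathcal{C})$ in Proposition \ref{prop:left_center_braided} is assembled from $\psi_{w,\zeta(z)}$ or from $\psi_{z,\zeta(w)}^{-1}$.
\end{enumerate}
If the result comes out reversed, the correct conclusion is that algebras in $(\mathbf{Mod}(\mathcal{B}),\Box^+)$ carry a braided functor $\mathcal{B}^{\mathrm{rev}}\to\mathcal{Z}_1(\mathcal{C})$. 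The lemma as literally stated then holds for $\Box^-$ of Notation \ref{not:induced_monoidal_product_on_right_modules}. In other words, the correction has to be made on the $\mathcal{B}$ side, not on the $\mathcal{Z}_1(\mathcal{C})$ side.

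A smaller point: terminality of $\mathcal{Z}_1(\mathcal{C})$ in $\Comm(\mathcal{C})$ requires $\mathcal{Z}_1(\mathcal{C})$ to be an object of $\mathbf{2Vect}$. This is guaranteed for multifusion $\mathcal{C}$ over $\mathbb{C}$, but nothing you cite guarantees it for an arbitrary finite semisimple monoidal $\mathcal{C}$. You can sidestep this by reading the lift $\mathcal{B}\to\mathcal{Z}_1(\mathcal{C})$ directly off the commuting-object structure on $f$, before invoking any universal property of $\Zl$.
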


\begin{remark}
    The identification of a central $\mathcal{B}$-module structure on a monoidal category $\mathcal{A}$ with a braided functor $\mathcal{B} \to \mathcal{Z}_1(\mathcal{A})$ into the Drinfeld center goes back to the notion of a central functor \cite{DGNO}, and the resulting theory of central $\mathcal{B}$-module categories was developed by Davydov--Nikshych \cite{DN13,DN21}; see also \cite{Xu}.
\end{remark}

Recall that the Drinfeld center $\mathcal{Z}_1(\mathbf{Mod}(\mathcal{B}))$ is equivalent to the 2-category of braided $\mathcal{B}$-module categories \cite{DN21}. Braided algebras there are precisely braided monoidal categories $\mathcal{M}$ equipped with a braided functor $\mathcal{B} \to \mathcal{M}$, see also \cite{JFR}.

\begin{proposition}\label{prop:full_center_of_central_module_category}
    The full center of a central $\mathcal{B}$-module monoidal category $\mathcal{C}$ is its ordinary Drinfeld center $\mathcal{Z}_1(\mathcal{C})$, together with the same braided functor $\mathcal{B} \to \mathcal{Z}_1(\mathcal{C})$.
\end{proposition}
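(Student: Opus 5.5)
We plan to compute the full center through characterization (i) of Theorem \ref{prop:full-center}, $\mathbf{Z}_1(\mathcal{C}) \simeq \mathbf{V}^R(\mathcal{C})$, or more directly by unwinding Definition \ref{def:full_center} in $\mathfrak{C}=\Mod(\mathcal{B})$. An object of $\Z_1(\Mod(\mathcal{B}))$ is a braided $\mathcal{B}$-module category $\mathcal{X}$ \cite{DN21}. A commuting half-braiding of $\mathcal{C}$ is then a $\mathcal{B}$-module functor $F\colon \mathcal{X}\to\mathcal{C}$ together with a natural isomorphism $\psi_{X,c}\colon F(X)\otimes c \cong c\otimes F(X)$. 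Here the left side is twisted by the half-braiding $b^{\mathcal{X}}_{\mathcal{C}}$ of $\mathcal{X}$ past $\mathcal{C}$ in $\Mod(\mathcal{B})$. The coherences \eqref{eqn:commuting_object_associativity} and \eqref{eqn:commuting_object_unitality} say that $\psi_{X,-}$ is a half-braiding on $F(X)$, compatible with $\otimes$ and the unit. The first step is therefore to show that such data is equivalent to a $\mathcal{B}$-module functor $\widetilde F\colon \mathcal{X}\to \mathcal{Z}_1(\mathcal{C})$ lifting $F$ along the forgetful functor $\mathcal{Z}_1(\mathcal{C})\to\mathcal{C}$. The point is that the half-braiding $b^{\mathcal{X}}$ of $\mathcal{X}$, restricted through $\mathcal{B}$, is exactly what makes $\widetilde F$ a $\mathcal{B}$-module functor once $\mathcal{Z}_1(\mathcal{C})$ is given its $\mathcal{B}$-module structure from the central structure $\mathcal{B}\to\mathcal{Z}_1(\mathcal{C})$.

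Second, we check that $\mathcal{Z}_1(\mathcal{C})$ itself, with the braided $\mathcal{B}$-module structure coming from the braided functor $\mathcal{B}\to\mathcal{Z}_1(\mathcal{C})$ and with the forgetful functor $\zeta\colon\mathcal{Z}_1(\mathcal{C})\to\mathcal{C}$, is a commuting half-braiding. Here $\psi$ is the tautological half-braiding of objects of $\mathcal{Z}_1(\mathcal{C})$, and the coherence equations reduce to the defining axioms of a half-braiding.

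Third, we prove terminality. Given a commuting half-braiding $(\mathcal{X},F,\psi)$, the first step produces $\widetilde F\colon\mathcal{X}\to\mathcal{Z}_1(\mathcal{C})$ with $\zeta\circ\widetilde F = F$ strictly, which gives a 1-morphism with $\chi = \mathrm{id}$. We must show the category of such 1-morphisms is contractible. A 1-morphism $(h,b^h,\chi^h)$ consists of a braided $\mathcal{B}$-module functor $h\colon \mathcal{X}\to\mathcal{Z}_1(\mathcal{C})$ together with an isomorphism $F\cong \zeta h$. Equation \eqref{eqn:commuting_object_1morphism} forces $\chi^h$ to intertwine $\psi$ with the half-braidings of $h(X)$. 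Hence $h$ is uniquely isomorphic to $\widetilde F$, and \eqref{eqn:commuting_object_2morphism} together with the faithfulness of $\zeta$ shows that 2-morphisms are unique. Finally, the braided algebra structure from Proposition \ref{prop:left_center_braided} and Theorem \ref{prop:full-center} is induced by terminality from $m\circ(\zeta\boxt\zeta)$ and from the braiding 2-morphism. Comparing with the second step, this is the usual tensor product and braiding of $\mathcal{Z}_1(\mathcal{C})$, and the unit map from $\mathcal{B}$ is the given central functor.

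The main obstacle is the first step: making precise how the half-braiding $b^{\mathcal{X}}_{\mathcal{C}}$ of a braided $\mathcal{B}$-module category past the relative product $\mathcal{X}\boxtimes_{\mathcal{B}}\mathcal{C}$ interacts with $\psi$. Since 1-morphisms out of $\mathcal{X}\boxtimes_{\mathcal{B}}\mathcal{C}$ are $\mathcal{B}$-balanced functors, $\psi$ must be a $\mathcal{B}$-balanced natural isomorphism. We would first verify that this balancing condition, evaluated on $X\triangleleft B$, is precisely the requirement that $F(X\triangleleft B)\cong F(X)\otimes B$ carry the product half-braiding determined by the central structure. If this holds, everything else is formal.
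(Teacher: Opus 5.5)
The paper states this proposition without proof, so there is no argument to compare against. Judged on its own, your plan of unwinding Definition~\ref{def:full_center} directly in $\Mod(\mathcal{B})$ has the right architecture. However, the step you single out as the main obstacle is resolved incorrectly, and Steps~2 and~3 inherit the problem.

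You predict that the $\mathcal{B}$-balancing condition on $\psi$ is \emph{precisely} the requirement that $s_{X,B}\colon F(X\triangleleft B)\cong F(X)\otimes\phi(B)$ intertwine $\psi_{X\triangleleft B,-}$ with the product half-braiding. On that reading, commuting half-braiding data on $(\mathcal{X},F)$ would be the same as a $\mathcal{B}$-module lift $\widetilde F\colon\mathcal{X}\to\mathcal{Z}_1(\mathcal{C})$. But that condition does not involve $b^{\mathcal{X}}$, whereas the balancing of $m\circ(1\boxt F)\circ b^{\mathcal{X}}_{\mathcal{C}}$ does. So the equivalence cannot hold for all half-braidings on a fixed $\mathcal{X}$.

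Here is what actually happens. Specialize the balancing equation to $c=\mathbf{1}$ and use unitality $\psi_{X,\mathbf{1}}=\mathrm{id}$. This says that $F$ carries the automorphism $\sigma^{\mathcal{X}}_{X,B}$ of $X\triangleleft B$ encoded by $b^{\mathcal{X}}$ (the Davydov--Nikshych braided module structure) to the double braiding of $\widetilde F(X)$ with $\tilde\phi(B)$ in $\mathcal{Z}_1(\mathcal{C})$, transported along $s_{X,B}$. Only once this holds does the equation for general $c$ reduce to your intertwining condition.

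A test case: take $\mathcal{B}=\mathcal{C}=\mathbf{Vect}_A$ with $A$ finite abelian and trivial braiding, $F=\mathrm{id}$, and $\mathcal{X}=\mathcal{B}$ with the half-braiding labelled by a character $\chi\in\widehat A$ (such inequivalent half-braidings on the unit exist). Every assignment $a\mapsto(a,\chi')$ with $\chi'\in\widehat A$ is a $\mathcal{B}$-module lift into $\mathcal{Z}_1(\mathbf{Vect}_A)\simeq\mathbf{Vect}_{A\times\widehat A}$. But only one value of $\chi'$ (namely $\chi$ or $\chi^{-1}$, depending on conventions) satisfies the true balancing condition. Only that lift is compatible with the half-braidings.

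The missing ingredient is therefore the datum $b^{\widetilde F}$. A 1-morphism of commuting half-braidings is a 1-morphism $(h,b^h)$ of $\Z_1(\Mod(\mathcal{B}))$. So your assertion in Step~3 that $\widetilde F$ ``gives a 1-morphism with $\chi=\mathrm{id}$'' requires $\widetilde F$ to intertwine $b^{\mathcal{X}}$ with the half-braiding of the candidate. This is exactly the $c=\mathbf{1}$ part of balancedness that your reformulation discards.

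Correspondingly, Step~2 must say which object of $\Z_1(\Mod(\mathcal{B}))$ the candidate is. It is $\mathcal{Z}_1(\mathcal{C})$ with the half-braiding whose braided-module structure is the double braiding with $\tilde\phi(B)$, with the orientation dictated by your conventions for $b^{\mathcal{X}}$. You must also check that the tautological $\psi$ is $\mathcal{B}$-balanced for the correspondingly twisted balancing. The associativity and unitality equations you cite do not see this, and it is this check that pins the half-braiding down.

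Once Step~1 is corrected to ``commuting half-braiding data on $(\mathcal{X},F)$ $\simeq$ lifts $\widetilde F$ that are braided $\mathcal{B}$-module functors'', the rest of your plan goes through as you describe. That covers the terminality argument in Step~3 via faithfulness of $\zeta$ and the identification of the braided algebra structure and unit.
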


\begin{remark}
    More generally, if $A$ is a braided algebra in a braided fusion 2-category $\mathfrak{B}$, Proposition \ref{prop:algebras-in-modules} provides a useful base-change principle: algebras in the module 2-category $\Mod_{\mathfrak{B}}(A)$ can be described as algebras $B$ in $\mathfrak{B}$ equipped with a braided algebra map $A\to \Zl(B)$. From another perspective, this is also a braided algebra in $\Z_1(\mathfrak{B})$, which is the full center of $A$ as an algebra in $\mathfrak{B}$.
\end{remark}

Finally, let us comment on how (de-)equivariantization \cite{DGNO} relates full centers in both pictures. Denote the 2-category of finite semisimple categories with $G$-action by
\begin{equation}
    \mathbf{2Rep}(G) := \mathbf{Fun}(\mathrm{B}G, \mathbf{2Vect}),
\end{equation} and the 2-category of finite semisimple $\mathbf{Rep}(G)$-module categories by
\begin{equation}
    \Sigma \mathbf{Rep}(G) := \mathbf{Mod}(\mathbf{Rep}(G)).
\end{equation}
Recall that there is an equivalence of symmetric fusion 2-categories:
\begin{equation}
    \mathbf{2Rep}(G) \simeq \Sigma \mathbf{Rep}(G),
\end{equation}
witnessed by relative tensor product over the invertible bimodule category $\mathbf{Vect}$.

Hence, (de-)equivariantization induces a braided equivalence of Drinfeld centers:
\begin{equation}
    \Z_1(\mathbf{2Rep}(G)) \simeq \Z_1(\Sigma \mathbf{Rep}(G)).
\end{equation}
Consider the monoidal unit: in $\mathbf{2Rep}(G)$, it is $\mathbf{Vect}$ with the trivial $G$-action; in $\Sigma \mathbf{Rep}(G)$, it is $\mathbf{Rep}(G)$ with the regular action on itself. The full center of the monoidal unit in $\mathbf{2Rep}(G)$, by Proposition \ref{prop:crossed_braided_center_is_full_center}, is equivalent to $\mathbf{Vect}_G$ with the conjugation $G$-crossed braided structure. The full center of the monoidal unit in $\Sigma \mathbf{Rep}(G)$, by Proposition \ref{prop:full_center_of_central_module_category}, is equivalent to $\mathcal{Z}_1(\mathbf{Rep}(G))$. Indeed, one can check that $\mathcal{Z}_1(\mathbf{Rep}(G))$ is the equivariantization of $\mathbf{Vect}_G$ with the conjugation $G$-crossed braided structure, and the two full centers are equivalent under the braided equivalence $\Z_1(\mathbf{2Rep}(G)) \simeq \Z_1(\Sigma \mathbf{Rep}(G))$.

\bibliographystyle{alpha}
\bibliography{references}

\end{document}